\newtheorem{theorem}{Theorem}[section]
\newtheorem{proposition}[theorem]{Proposition}
\newtheorem{definition}[theorem]{Definition}
\newtheorem{lemma}[theorem]{Lemma}
\newtheorem{conjecture}[theorem]{Conjecture}
\theoremstyle{remark}
\newtheorem{remark}[theorem]{Remark}
\numberwithin{equation}{section}
\newcommand{\R}{\mathbb R}
\newcommand{\Z}{{\mathbb Z}}
\newcommand{\N}{{\mathbb N}}
\newcommand{\C}{{\mathbb C}}
\newcommand{\Q}{{\mathbb Q}}
\newcommand{\kk}{\boldsymbol{k}}
\newcommand{\veta}{\boldsymbol{\eta}}
\newcommand{\x}{{\sf x}}
\title[Finite and symmetric colored multiple zeta values]{Finite and symmetric colored multiple zeta values and multiple harmonic $q$-series at roots of unity}
\author{Koji Tasaka}
\address[Koji Tasaka]{Aichi Prefectural University}
\email{tasaka@ist.aichi-pu.ac.jp}
\date{}
\begin{document}
\maketitle

\begin{abstract}
The Kaneko-Zagier conjecture states that finite and symmetric multiple zeta values satisfy the same relations.
In the previous works with H.~Bachmann and Y.~Takeyama, we proved that the finite and symmetric multiple zeta values are obtained as an `algebraic' and `analytic' limit at $q\rightarrow 1$ of certain multiple harmonic $q$-sums, and studied their relations in order to give partial evidence of the Kaneko-Zagier conjecture.
In this paper, we start with multiple harmonic $q$-sums of level $N$, which are $q$-analogues of the truncated colored multiple zeta values. 
We introduce our finite and symmetric colored multiple zeta values as an algebraic and analytic limit of the multiple harmonic $q$-sums of level $N$ and discuss a higher level (or a cyclotomic) analogue of the Kaneko-Zagier conjecture. 
\end{abstract}

\section{Introduction}
For each tuple of positive integers $\kk=(k_1,\ldots,k_r)\in \Z_{>0}^r$, Kaneko and Zagier \cite{KZ} introduce the finite multiple zeta value $\zeta^{\mathcal{A}}(\kk)$ as an element in the $\Q$-algebra $\mathcal{A}=\big(\prod_{p} \mathbb{F}_p\big) \big/ \big(\bigoplus_{p} \mathbb{F}_p\big) $ with $p$ running over all prime.
They also define the symmetric multiple zeta value $\zeta^{\mathcal{S}}(\kk)$ as an element in the quotient $\Q$-algebra $\mathcal{Z}/\zeta(2)\mathcal{Z}$ of the algebra $\mathcal{Z}$ generated by all multiple zeta values
\[ \zeta(k_1,\ldots,k_r)=\sum_{m_1>\cdots>m_r>0}\frac{1}{m_1^{k_1}\cdots m_r^{k_r}} \quad (k_1\in \Z_{\ge2}, k_2,\ldots,k_r\in \Z_{\ge1}).\]
They established an exciting conjecture on these two objects, stating that finite multiple zeta values satisfy the same $\Q$-linear relations as symmetric multiple zeta values and vice versa.
This conjecture (called the \emph{Kaneko-Zagier conjecture} in this paper) is far from being solved at the present time, but remarkably, by many authors, several relations among finite and symmetric multiple zeta values are found in the same forms, which provide partial evidence for the Kaneko-Zagier conjecture. 
See \cite{Kaneko} and \cite{Z16} for references.
 
In this paper, we aim at a generalization of the Kaneko-Zagier conjecture, replacing the above multiple zeta values with the colored multiple zeta values of level $N$, which are defined for $\kk=(k_1,\ldots,k_r) \in \Z_{>0}^r$ and $\veta=(\eta_1,\ldots,\eta_r)\in \mu_N^r$ with $(k_1,\eta_1)\neq (1,1)$ by
\[L\binom{\veta}{\kk}=L\binom{\eta_1,\ldots,\eta_r}{k_1,\ldots,k_r}=\sum_{m_1>\cdots >m_r>0} \frac{\eta_1^{m_1}\cdots \eta_r^{m_r}}{m_1^{k_1}\cdots m_r^{k_r}},\] 
where $\mu_N$ is the set of $N$-th roots of unity.
A counterpart of the finite and symmetric multiple zeta values for the colored ones will be obtained as an `algebraic' and an `analytic' limit at $q=1$ of certain multiple harmonic $q$-sums, which can be viewed as a generalization of the previous results \cite[Theorems 1.1 and 1.2]{BTT18} (see also \cite{BTT19,Takeyama}).

Our finite and symmetric colored multiple zeta values contain the ones introduced by Singer and Zhao \cite{SZ15} and Jarossay \cite{J18} as special cases (Remark \ref{rem:connection_other_models}).
We will prove some standard relations for our finite and symmetric colored multiple zeta values, such as reversal relations, harmonic relations and linear shuffle relations (Propositions \ref{prop:reversal}, \ref{prop:harmonic}, \ref{prop:shuffle_general} and \ref{prop:shuffle_general2}).
In any case, relations we obtain are the same shape, so we may expect that finite and symmetric colored multiple zeta values satisfy the same linear relations over $\Q(\zeta_N)$, which can be viewed as a higher level (or a cyclotomic) analogue of the Kaneko-Zagier conjecture (see Section 6).

The organization of this paper is as follows.
In Section 2, we define multiple harmonic $q$-sums of level $N$ and give their asymptotic formulas at ``$q\rightarrow 1$".
In Section 3, taking the main terms of the asymptotic formulas, we define our symmetric colored multiple zeta values of level $N$ for each \emph{class} $\alpha\in \Z/N\Z$ as elements in the polynomial ring $\C[T]$.
The notion of a class $\alpha\in \Z/N\Z$ naturally appears in this context.
Using the regularization relation of colored multiple zeta values, we show their independence from $T$ (namely, our symmetric colored multiple zeta values lie in $\C$). 
This independence proves that the existence of an analytic limit at ``$q\rightarrow1$" of multiple harmonic $q$-sums of level $N$, whose limiting values turn out to be our symmetric colored multiple zeta values of level $N$.
In Section 4, as a conjectural counterpart of symmetric colored multiple zeta values, we define finite colored multiple zeta values of level $N$ for each class $\alpha\in \Z/N\Z$ and prove that they are obtained as an algebraic limit at ``$q=1$" of multiple harmonic $q$-sums of level $N$.
In Section 5, we give relations for finite and symmetric colored multiple zeta values.
In Section 6, we aim at providing evidence of a higher level analogue of the Kaneko-Zagier conjecture stating that our finite and symmetric colored multiple zeta values of level $N$ with a class $\alpha\in \Z/N\Z$ satisfy the same relations.

\

\noindent\textbf{Acknowledgments.} 
The author is grateful to Henrik Bachmann and Yoshihiro Takeyama for very valuable discussions. 
The author is also very grateful to Jianqiang Zhao for helpful comment on the linear shuffle relation.
This work was partially supported by JSPS KAKENHI Grant Numbers 18K13393.

\section{Multiple harmonic $q$-sums}

\subsection{Notations}

Throughout this paper, for a positive integer $N$ we denote by $\mu_N$ the set of $N$-th roots of unity. 
For a positive integer $m$, we denote by $\zeta_m$ a primitive $m$-th root of unity and by $\Q(\zeta_m)$ its cyclotomic field.

The complex conjugate of $a\in \C$ is denoted by $\overline{a}$.
We often use the relation $\overline{\eta}=\eta^{-1}$ for $\eta\in \mu_N$.

As usual, the Kronecker delta is denoted by $\delta_{a,b}$.

For $\kk =(k_1,\ldots,k_r) \in \Z_{>0}^r$ and $\veta=(\eta_1,\ldots,\eta_r) \in \mu_N^r$, we call a tuple $\binom{\veta}{\kk}\in \mu_N^r\times \Z_{>0}^r$ an \emph{index}, $k_1+\cdots+k_r$ the \emph{weight} and $r$ the \emph{depth}. 
An index $\binom{\veta}{\kk}$ is called \emph{admissible} if $\binom{\eta_1}{k_1}\neq\binom{1}{1}$.
We allow the empty index $\emptyset$ to be the unique index $\binom{\veta}{\kk}$ of weight 0 and depth $0$.
For any function $F$ on indices, set $F(\emptyset)=1$.

\subsection{Multiple harmonic $q$-sums at roots of unity}

The \emph{multiple harmonic $q$-sum of level $N$} is defined for a positive integer $m$ and an index $\binom{\veta}{\kk}=\binom{\eta_1,\ldots,\eta_r}{k_1,\ldots,k_r}\in \mu_N^r\times \Z_{>0}^r$ by
\begin{equation}\label{eq:def_mhq}
  \setlength\arraycolsep{1pt}z_m\left(\begin{array}{c}\veta\\\kk\end{array};q\right)=\sum_{m>m_1>\cdots>m_r>0}\frac{\eta_1^{m_1}\cdots \eta_r^{m_r}}{[m_1]_q^{k_1}\cdots [m_r]_q^{k_r}} ,
\end{equation}
where $[m]_q=1+q+\cdots+q^{m-1}=\frac{1-q^m}{1-q}$ is the $q$-integer.
We will be interested in the values at $q=\zeta_m$ primitive $m$-th roots of unity:
\[\setlength\arraycolsep{1pt}z_m\left(\begin{array}{c}\eta_1,\ldots,\eta_r\\ k_1,\ldots,k_r\end{array};\zeta_m\right)= \sum_{m> m_1>\cdots>m_r>0} \prod_{j=1}^r \eta_j^{m_j} \left( \frac{1-\zeta_m}{1-\zeta_m^{m_j} }\right)^{k_j},\]
which lie in $\Q(\zeta_m,\zeta_N) \left(\subset  \Q(\zeta_{mN}) \right)$.
We note that the above sums are not well-defined, if $\zeta_m$ is not primitive.

It should be mentioned that in the previous works \cite{BTT18,BTT19}, we develop the theory of values of multiple harmonic $q$-sums of the Bradley-Zhao model at primitive roots of unity.
Since our aim of this paper is to generalize these results for higher levels, a natural object would be the Bradley-Zhao model for multiple harmonic $q$-sums of level $N$, which is defined by
\begin{equation}\label{eq:bd_model} \sum_{m>m_1>\cdots>m_r>0}\frac{\eta_1^{m_1}q^{(k_1-1)m_1}\cdots \eta_r^{m_r}q^{(k_r-1)m_r}}{[m_1]_q^{k_1}\cdots [m_r]_q^{k_r}} .
\end{equation}
Actually, we can prove (but their detailed proofs are omitted) that our main results of this paper (Theorems \ref{thm:connection_smzv} and \ref{thm:connection_fmzv}) hold for this model as well.
Our model \eqref{eq:def_mhq} however simplifies some of the proofs, because of its shape of the harmonic relation (see Remark \ref{rem:bradley_zhao}).
A weak point of our model is that we do not know the domain of absolute convergence at $m\rightarrow \infty$ (we do not need this in this paper), while the Bradley-Zhao model for the case $N=1$ is widely studied on this subject (see e.g. \cite[\S12]{Z16}).

\subsection{Algebraic setup}
To describe relations of \eqref{eq:def_mhq}, it is convenient to use the algebraic setup given by Hoffman \cite{H97} (see also \cite{AK04,R02,SZ15}).
Let 
\[ \mathfrak{A}=\mathfrak{A}_N=\Q\langle e_0,e_\eta \mid \eta\in\mu_N \rangle\]
be the non-commutative polynomial algebra over $\Q$ and set
\[ \mathfrak{A}^1=\Q+\sum_{\eta\in \mu_N} \mathfrak{A}e_\eta,\quad \mathfrak{A}^0=\Q+\sum_{\eta\in \mu_N} e_0 \mathfrak{A} e_\eta+\sum_{\substack{\eta,\xi\in \mu_N\\\xi\neq 1}} e_\xi\mathfrak{A}e_\eta.\]

For $k\ge1$ and $\eta\in \mu_N$, write $e_{k,\eta}=e_0^{k-1}e_\eta $.
The subring $\mathfrak{A}^1$ is then freely generated by $e_{k,\eta} \ (k\ge1,\eta\in \mu_N)$.
We define the \emph{harmonic product} $\ast:\mathfrak{A}^1\times \mathfrak{A}^1\rightarrow \mathfrak{A}^1$ as a $\Q$-bilinear map given inductively by
\[ e_{k,\eta}w\ast e_{l,\xi}w' = e_{k,\eta}(w\ast e_{l,\xi}w' )+ e_{l,\xi}(e_{k,\eta}w\ast w') +e_{k+l,\eta\xi}(w\ast w')\]
for $k,l\ge1,\eta,\xi\in\mu_N$ and $w,w' \in \mathfrak{A}^1$, with the initial condition $1\ast w=w\ast 1 = w$.
Equipped with the harmonic product, the vector space $\mathfrak{A}^1$ forms a commutative $\Q$-algebra and $\mathfrak{A}^0$ is a $\Q$-subalgebra.
We also denote by $\mathfrak{A}^1_\ast$ and $\mathfrak{A}^0_\ast$ the commutative $\Q$-algebras equipped with the harmonic product.

The standard technique to prove the harmonic relation for the multiple harmonic $q$-sums is applied (see e.g. \cite{H97}). 
\begin{proposition}\label{prop:stuffle_zm}
The $\Q$-linear map $\mathfrak{z}_m : \mathfrak{A}^1_\ast \longrightarrow  \C$ defined by 
\[ \mathfrak{z}_m(e_{k_1,\eta_1}\cdots e_{k_r,\eta_r})= \setlength\arraycolsep{1pt}z_m\left(\begin{array}{c}\eta_1,\ldots,\eta_r\\ k_1,\ldots,k_r\end{array};\zeta_m\right) \quad \mbox{and} \quad \mathfrak{z}_m(1)=1\]
is an algebra homomorphism.
\end{proposition}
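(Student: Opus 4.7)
The plan is the standard Hoffman-type induction on total depth. By $\Q$-bilinearity it suffices to verify $\mathfrak{z}_m(u \ast v) = \mathfrak{z}_m(u)\,\mathfrak{z}_m(v)$ for monomials $u = e_{k_1,\eta_1}\cdots e_{k_r,\eta_r}$ and $v = e_{l_1,\xi_1}\cdots e_{l_s,\xi_s}$. To keep the inductive hypothesis usable across nested sums, I would in fact prove the slightly more general truncated statement: with $q = \zeta_m$ fixed and $\Phi_M(w) := \sum_{M > m_1 > \cdots > m_r > 0} \prod_j \eta_j^{m_j}[m_j]_q^{-k_j}$ for $1 \le M \le m$, we have $\Phi_M(u \ast v) = \Phi_M(u)\Phi_M(v)$ for every $M$ and every pair $(u,v)$. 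Taking $M = m$ recovers the claim, and the base case in which one of $u,v$ is $1$ is immediate from $\Phi_M(1) = 1$ and $1 \ast w = w$.

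For the inductive step I would write $u = e_{k,\eta}u'$, $v = e_{l,\xi}v'$ and expand $\Phi_m(u)\Phi_m(v)$ as a double sum over $\{m > m_1 > \cdots > m_r > 0\} \times \{m > n_1 > \cdots > n_s > 0\}$. The key step is to split this domain into the three disjoint pieces $\{m_1 > n_1\}$, $\{m_1 < n_1\}$, and $\{m_1 = n_1\}$. On $\{m_1 > n_1\}$ the integer $m_1$ is the overall largest index; freezing $m_1$ and summing the remaining variables gives $\Phi_{m_1}(u')\,\Phi_{m_1}(v)$, which by the inductive hypothesis (at smaller total depth and with $M = m_1$) equals $\Phi_{m_1}(u' \ast v)$, so the piece contributes $\Phi_m(e_{k,\eta}(u' \ast v))$. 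By symmetry the piece $\{m_1 < n_1\}$ contributes $\Phi_m(e_{l,\xi}(u \ast v'))$. On the diagonal $\{m_1 = n_1\}$ the root-of-unity factors collapse as $\eta^{m_1}\xi^{n_1} = (\eta\xi)^{m_1}$ and the $q$-denominators as $[m_1]_q^k[n_1]_q^l = [m_1]_q^{k+l}$, so this piece equals $\Phi_m(e_{k+l,\eta\xi}(u' \ast v'))$. Summing the three contributions reproduces exactly the inductive defining relation for $e_{k,\eta}u' \ast e_{l,\xi}v'$, which closes the induction.

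The main (mild) obstacle is the clean shape of the diagonal term: it is the distinctive feature of the harmonic product compared with the colored shuffle product, and the simplicity of its output $e_{k+l,\eta\xi}$ depends crucially on our choice of model \eqref{eq:def_mhq} rather than the Bradley--Zhao model \eqref{eq:bd_model}, which would produce additional $q$-deformation factors (cf.\ Remark \ref{rem:bradley_zhao}). One should also verify that the whole rearrangement is legitimate at $q = \zeta_m$: because $1 \le m_j, n_i < m$ strictly and $\zeta_m$ is a primitive $m$-th root of unity, every factor $1 - \zeta_m^{m_j}$ and $1 - \zeta_m^{n_i}$ is nonzero, so no denominator vanishes and all manipulations are valid at the level of individual complex summands.
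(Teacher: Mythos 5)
Your argument is correct and is precisely the standard Hoffman-type induction that the paper invokes (with reference to \cite{H97}) without spelling out; the paper only displays the depth-$1$ case as an illustration. The one refinement you make explicit — proving the truncated identity $\Phi_M(u\ast v)=\Phi_M(u)\Phi_M(v)$ for all $1\le M\le m$ so the inductive hypothesis can be applied after freezing the largest index — is exactly the right way to make the induction close, and your observations about nonvanishing denominators at primitive $\zeta_m$ and about the model-dependence of the diagonal term are both accurate.
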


For example, the harmonic product $e_{k,\eta}\ast e_{l,\xi}=e_{k,\eta}e_{l,\xi}+e_{l,\xi}e_{k,\eta}+e_{k+l,\eta\xi}$ corresponds to the identities
\begin{equation*}\label{eq:har_zm}
\begin{aligned}
\setlength\arraycolsep{1pt}z_m\left(\begin{array}{c}\eta\\ k\end{array};q\right)
\setlength\arraycolsep{1pt}z_m\left(\begin{array}{c}\xi\\ l\end{array};q\right) & =\sum_{m>n,n'>0} \frac{\eta^m\xi^n}{[n]_{q}^k[n']_{q}^l} \\
&= \left(\sum_{m>n>n'>0} +\sum_{m>n>n'>0}+\sum_{m>n=n'>0} \right) \frac{\eta^m\xi^n}{[n]_{q}^k[n']_{q}^l} \\
&=\setlength\arraycolsep{1pt}z_m\left(\begin{array}{c}\eta,\xi\\ k,l\end{array};q\right)+\setlength\arraycolsep{1pt}z_m\left(\begin{array}{c}\xi,\eta\\ l,k\end{array};q\right)+\setlength\arraycolsep{1pt}z_m\left(\begin{array}{c}\eta\xi\\ k+l\end{array};q\right).
\end{aligned}
\end{equation*}

\subsection{The values at $q\rightarrow 1$ and colored multiple zeta values}

The limiting values of the multiple harmonic $q$-sums as $q\rightarrow 1$ coincide with the \emph{truncated colored multiple zeta values}, which are defined for each integer $m>0$ and  index $\binom{\veta}{\kk}=\binom{\eta_1,\ldots,\eta_r}{k_1,\ldots,k_r}$ by
\[L_m\binom{\veta}{\kk}=\sum_{m>m_1>\cdots >m_r>0} \frac{\eta_1^{m_1}\cdots \eta_r^{m_r}}{m_1^{k_1}\cdots m_r^{k_r}}.\]
Since $\displaystyle\lim_{q\rightarrow 1}[m]_q=m$, we have
\[ \lim_{q\rightarrow 1}\setlength\arraycolsep{1pt}z_m\left(\begin{array}{c}\veta\\\kk\end{array};q\right) = L_m\binom{\veta}{\kk}.\]
The limit 
\[ L\binom{\veta}{\kk}=\lim_{m\rightarrow \infty} L_m\binom{\veta}{\kk}\]
exists, if and only if $\binom{\veta}{\kk}$ is admissible (see e.g. \cite[Proposition 1.1]{AK04}).
These values are called the \emph{colored multiple zeta values of level $N$} \cite{Z16} (also multiple $L$-values in \cite{AK04} and multiple polylogarithm values at roots of unity in \cite{G98}).

We briefly recall the harmonic regularized multiple zeta values. 
First of all, the truncated colored multiple zeta values satisfy the harmonic relations.
Namely, the $\Q$-linear map $L_m : \mathfrak{A}^1_\ast \longrightarrow  \C$ defined by 
\[ L_m(e_{k_1,\eta_1}\cdots e_{k_r,\eta_r})= L_m\binom{\eta_1,\ldots,\eta_r}{ k_1,\ldots,k_r} \quad \mbox{and} \quad L_m(1)=1\]
is an algebra homomorphism.
Note that $\displaystyle\lim_{m\rightarrow \infty}L_m(e_{k_1,\eta_1}\cdots e_{k_r,\eta_r})=L\tbinom{\eta_1,\ldots,\eta_r}{ k_1,\ldots,k_r}$ holds if and only if $e_{k_1,\eta_1}\cdots e_{k_r,\eta_r}\in \mathfrak{A}^0$.
In a similar way as \cite[Theorems 3.1 and 4.1]{H97}, it can be shown that $\mathfrak{A}^1_\ast \cong \mathfrak{A}^0_\ast [e_{1,1}]$.
Namely, for any $w\in \mathfrak{A}^1_\ast$ there exist $w_0,w_1,\ldots,w_n \in \mathfrak{A}^0$ such that
\[ w=w_0+w_1\ast e_{1,1}+w_2\ast e_{1,1}^{\ast2}+\cdots+ w_n \ast e_{1,1}^{\ast n}\]
(explicit formulas for $w_0,w_1,\ldots,w_n$ can be obtained in much the same way as \cite[Corollary 5]{IKZ06}).
Applying $L_m$ to the above $w$ we get
\[ L_m(w)=L_m(w_0)+L_m(w_1)L_m(e_{1,1})+\cdots +L_m(w_n)L_m(e_{1,1})^n.\]
Since $L_m(e_{1,1})=\log m+\gamma +O(m^{-1}\log m) \ (m\rightarrow \infty)$, this shows that for any index $\binom{\veta}{\kk}$ there are the unique polynomials $L_\ast \left(\begin{smallmatrix} \veta \\ \kk \end{smallmatrix};T  \right) \in \C[T]$ such that
\begin{equation*}\label{eq:reg_colored} 
L_m \binom{\veta}{\kk}= \setlength\arraycolsep{1pt}L_\ast \left(\begin{array}{c} \veta \\ \kk \end{array};\log m + \gamma  \right) +O\left(\frac{\log^J m}{m}\right) \quad (m\rightarrow \infty),
\end{equation*}
with a positive integer $J$ depending on $\kk$, where $\gamma$ is the Euler constant.
By definition, we have $L_\ast \left(\begin{smallmatrix} \veta \\ \kk \end{smallmatrix};T  \right) = L\tbinom{\veta}{\kk}$, if $\tbinom{\veta}{\kk}$ is admissible, and $L_\ast \left(\begin{smallmatrix} 1 \\ 1 \end{smallmatrix};T  \right) = T$.

\subsection{Asymptotic formula}

In Theorem \ref{thm:connection_smzv} below, we give another limiting value of \eqref{eq:def_mhq} at $q=e^{2\pi i/m}$ as $m\rightarrow \infty$ (this is the `analytic' limit mentioned in the introduction).
In order to do this, in this subsection we show the following asymptotic formula.

\begin{theorem}\label{thm:asym_zm}
For any $\kk=(k_1,\ldots,k_r)\in \Z_{>0}^r$ and $\veta=(\eta_1,\ldots,\eta_r)\in\mu_N^r$ we have
\begin{align*}
\setlength\arraycolsep{1pt}z_m \left(\begin{array}{c}\veta\\\kk\end{array};e^{\frac{2\pi i}{m}} \right)  &=\sum_{j=0}^r (-1)^{k_1+\cdots+k_j} (\eta_{1}\cdots \eta_j)^m \\
&\times \setlength\arraycolsep{1pt}L_\ast \left(\begin{array}{c}\overline{\eta}_j,\ldots,\overline{\eta}_{1} \\k_j,\ldots,k_1 \end{array};\log \frac{m}{\pi} + \gamma +\frac{\pi i}{2}  \right) 
\setlength\arraycolsep{1pt}L_\ast \left(\begin{array}{c} \eta_{j+1},\ldots,\eta_r \\k_{j+1},\ldots,k_r \end{array};\log \frac{m}{\pi} + \gamma -\frac{\pi i}{2}  \right)  \\
&+O\left(\frac{\log^J m}{m}\right) \quad (m\rightarrow \infty)
\end{align*}
with a positive integer $J$ depending on $\kk$, where $\gamma$ is the Euler constant and $\overline{\eta}$ means the complex conjugate of $\eta\in \C$.
\end{theorem}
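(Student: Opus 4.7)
The plan is to split the summation range at $m/2$, reflect ``top'' indices via $m_i \mapsto m - n_i$ using the identity $\frac{1-q}{1-q^{-n}} = -q^n/[n]_q$ (immediate from $q^m = 1$), recognize the reflected sum as the complex conjugate of a standard ``bottom-type'' sum up to a $1+O(1/m)$ factor, and then apply a single key asymptotic lemma to match each piece with the regularization polynomial $L_\ast$ at the shifted argument $\log(m/\pi) + \gamma \mp \pi i/2$.

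\textbf{Step 1 (Decomposition and reflection).} I would partition the chain $m > m_1 > \cdots > m_r > 0$ according to $j := \#\{i : m_i > m/2\}$; monotonicity forces each chain into one of exactly $r+1$ regions, with the even-$m$ diagonal $m_i = m/2$ contributing only $O(1/m)$. In the $j$-th piece, substituting $m_i = m - n_i$ for $i \le j$ gives
\[ \eta_i^{m-n_i}\!\left(\frac{1-q}{1-q^{m-n_i}}\right)^{\!k_i} = (-1)^{k_i}\eta_i^m\,\overline{\eta_i}^{n_i}\,\frac{q^{k_i n_i}}{[n_i]_q^{k_i}}. \]
Since $\overline{1/[n]_q^{k}} = q^{k(n-1)}/[n]_q^{k}$ for $|q|=1$, the factor $q^{k_i n_i}/[n_i]_q^{k_i}$ matches $\overline{1/[n_i]_q^{k_i}}$ up to the global factor $q^{k_i} = 1 + O(1/m)$. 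Relabeling $n'_i = n_{j+1-i}$ to reverse the order on the top indices, the $j$-th piece factors (up to $O(\log^J m/m)$) as
\[ S_j = (-1)^{k_1+\cdots+k_j}(\eta_1\cdots\eta_j)^m\cdot \overline{\mathcal{B}\!\binom{\eta_j,\ldots,\eta_1}{k_j,\ldots,k_1}(m)} \cdot \mathcal{B}\!\binom{\eta_{j+1},\ldots,\eta_r}{k_{j+1},\ldots,k_r}(m), \]
where $\mathcal{B}\binom{\boldsymbol\mu}{\boldsymbol k}(m) := \sum_{\lfloor m/2\rfloor \ge m_1 > \cdots > m_s > 0}\prod \mu_i^{m_i}/[m_i]_q^{k_i}$ is the ``bottom'' truncation of the $q$-sum.

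\textbf{Step 2 (Key asymptotic lemma).} The technical heart is the claim
\begin{equation*}
\mathcal{B}\!\binom{\boldsymbol\mu}{\boldsymbol k}(m) = L_\ast\!\left(\begin{array}{c}\boldsymbol\mu \\ \boldsymbol k\end{array};\log\tfrac{m}{\pi} + \gamma - \tfrac{\pi i}{2}\right) + O\!\left(\tfrac{\log^J m}{m}\right).
\end{equation*}
Using the harmonic-product isomorphism $\mathfrak{A}^1_\ast \cong \mathfrak{A}^0_\ast[e_{1,1}]$ noted in the paper, any word decomposes as a polynomial in $e_{1,1}$ with admissible coefficients. For admissible words one compares directly with the truncated $L$-value via the identity $\frac{1-q}{1-q^n} = \frac{1}{n}\cdot\frac{\pi n/m}{\sin(\pi n/m)}\cdot\frac{\sin(\pi/m)}{\pi/m}\cdot e^{i\pi(1-n)/m}$, showing that the difference is $O(\log^J m/m)$ once admissibility is invoked. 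For the single $e_{1,1}$ one computes directly
\[ \mathcal{B}\!\binom{1}{1}(m) = \sin(\pi/m)e^{i\pi/m}\!\left[\sum_{n=1}^{\lfloor m/2\rfloor}\cot(\pi n/m) - i\lfloor m/2\rfloor\right], \]
and Euler-Maclaurin applied to the smooth function $\cot(\pi x/m) - m/(\pi x)$, together with $\int_0^{\pi/2}(\cot y - 1/y)\,dy = \log(2/\pi)$, gives $\sum_{n=1}^{\lfloor m/2\rfloor}\cot(\pi n/m) = (m/\pi)(\log(m/\pi)+\gamma) + O(1)$, whence $\mathcal{B}\binom{1}{1}(m) = \log(m/\pi) + \gamma - \pi i/2 + O(\log m / m)$. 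This pins down the regularization parameter. Conjugating the lemma and using $\overline{L_\ast\binom{\boldsymbol\mu}{\boldsymbol k}(T)} = L_\ast\binom{\overline{\boldsymbol\mu}}{\boldsymbol k}(\overline T)$ handles the top factor, with the sign of $\pi i/2$ flipping automatically. Assembling the $S_j$'s over $j = 0, \ldots, r$ then yields the theorem.

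\textbf{Main obstacle.} The delicate step is the admissible case of the key lemma. Uniformly in $n \in \{1,\ldots,\lfloor m/2\rfloor\}$, the corrector $\frac{\pi n/m}{\sin(\pi n/m)}$ is $\Theta(1)$, not small, for $n \asymp m/2$, so a naive Taylor expansion in $n/m$ fails uniformly at the top of the summation range. The remedy exploits admissibility itself: if some $k_i \ge 2$ then the tail from $n \asymp m$ contributes only $O(m^{1-k_i}\log m)$ by direct estimation, while if some $\mu_i \ne 1$ then Abel summation against the bounded partial sums $\sum_{n \le N}\mu_i^n = O(1)$ furnishes the missing decay. Controlling this cleanly, and propagating the single-variable shift $-\log\pi - \pi i/2$ through the harmonic-product decomposition without introducing $T$-dependence in admissible coefficients, is the technical crux of the proof.
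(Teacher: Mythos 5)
Your proposal is essentially the paper's proof. The decomposition at $m/2$, the reflection $m_i \mapsto m - n_i$ (via $1/[m-n]_q = -q^n/[n]_q$ and $\overline{[n]_q} = q^{1-n}[n]_q$) turning the upper segment into a conjugate half-range $q$-sum, and the reduction to a key lemma identifying that half-range sum asymptotically with $L_\ast\bigl(\,\cdot\,;\log\tfrac{m}{\pi} + \gamma - \tfrac{\pi i}{2}\bigr)$ through the decomposition $\mathfrak{A}^1_\ast \cong \mathfrak{A}^0_\ast[e_{1,1}]$ all reproduce the argument in the paper (this is Lemma \ref{lem:asym_zm+}). One small improvement on your side: the self-contained Euler--Maclaurin evaluation of the depth-one baseline via $\sum_{n\le m/2}\cot(\pi n/m)$ and $\int_0^{\pi/2}(\cot y - 1/y)\,dy = \log(2/\pi)$ replaces the paper's citation of Lemma~2.8 of \cite{BTT18}. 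What you leave open --- the admissible case of the key lemma --- is exactly where the paper spends most of its effort: a page of Abel-summation estimates for $k_1 = 1$, $\eta_1 \neq 1$, plus a citation to Lemma~2.7 of \cite{BTT18} for $k_1 \geq 2$. Your sketched remedy is the right one, but note that it is admissibility of the \emph{leading} entry $(k_1,\eta_1)\neq(1,1)$ that supplies the needed decay in the outermost summation variable; phrasing it as ``some $k_i\geq 2$'' or ``some $\eta_i\neq 1$'' is too weak, since $k_1 = 1,\ \eta_1 = 1$ with a good inner index still gives a logarithmically divergent outer sum.
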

\begin{proof}
Let $\zeta_m=e^{\frac{2\pi i}{m}}$ and write $[n]=\frac{1-\zeta_m^n}{1-\zeta_m}$.
Decomposing the set $\{(m_1,\ldots,m_r)\in \mathbb{Z}^r\mid m>m_1>\cdots>m_r>0 \}$ into the disjoint union
\[ \bigsqcup_{j=0}^r \{(m_1,\ldots,m_r)\in \mathbb{Z}^r \mid m>m_1>\cdots>m_j> \frac{m}{2}\ge m_{j+1}>\cdots>m_r>0\}\]
and then changing the summation variables as $m_{a}=m-n_{j+1-a} \ (1\le a\le j)$ for each $j\in\{0,1,\ldots,r\}$, we have
\begin{align*}
&\setlength\arraycolsep{1pt}z_m \left(\begin{array}{c}\eta_1,\ldots,\eta_r\\ k_1,\ldots,k_r\end{array};\zeta_m \right)  = \sum_{j=0}^r  \sum_{m> m_1>\cdots>m_j> \frac{m}{2}} \prod_{a=1}^j \frac{\eta_a^{m_a}}{[m_a]^{k_a}}  \sum_{\frac{m}{2}\ge m_{j+1}>\cdots>m_r>0} \prod_{a=j+1}^r \frac{\eta_a^{m_a}}{[m_a]^{k_a}} \\
&=\sum_{j=0}^r (-\overline{\zeta}_m)^{k_1+\cdots+k_j} (\eta_{1}\cdots \eta_j)^m \sum_{\frac{m}{2}> n_{j}>\cdots>n_1>0} \prod_{a=1}^j \frac{\overline{\eta}_a^{n_a}}{\overline{[n_a]}^{k_a}}  \sum_{\frac{m}{2}\ge m_{j+1}>\cdots>m_r>0} \prod_{a=j+1}^r \frac{\eta_a^{m_a}}{[m_a]^{k_a}}.
\end{align*}
where for the last equality we have also used $(1-\overline{\zeta}_m)/(1-\zeta_m) =-\overline{\zeta}_m$ and $\overline{\eta}=\eta^{-1}$ for $\eta\in\C$ such that $|\eta|=1$.
For the second term in the last equation, we write
\[z_m^+\binom{\eta_1,\ldots,\eta_r}{k_1,\ldots,k_r}   = \sum_{\frac{m}{2}\ge m_{1}>\cdots>m_r>0} \prod_{a=1}^r \frac{\eta_a^{m_a}}{[m_a]^{k_a}}.\]
Then the first term can be written in the form
\begin{align*}
& \sum_{\frac{m}{2}> n_{j}>\cdots>n_1>0} \prod_{a=1}^j \frac{\overline{\eta}_a^{n_a}}{\overline{[n_a]}_{\zeta_m}^{k_a}}  \\
 &=\begin{cases} \displaystyle\overline{z_m^+\binom{\eta_j,\ldots,\eta_1}{k_j,\ldots,k_1}} & m: \ \mbox{odd} \\ 
 \mbox{} \\
 \displaystyle\overline{z_m^+\binom{\eta_j,\ldots,\eta_1}{k_j,\ldots,k_1}}   -\left( \frac{1-\overline{\zeta}_m}{2}\right)^{k_j}\overline{\eta}_j^{\frac{m}{2}} \overline{z_m^+\binom{\eta_{j-1},\ldots,\eta_1}{k_{j-1},\ldots,k_1}} & m: \ \mbox{even} \end{cases}.
\end{align*}
Since $1-\overline{\zeta}_m=O(1/m) \ (m\rightarrow \infty)$, the desired result follows from the next lemma.
\end{proof}

\begin{lemma}\label{lem:asym_zm+}
For $\kk\in \Z_{>0}^r$ and $\veta\in \mu_N^r$, we have
\[ z_m^+\binom{\veta}{\kk}  = \setlength\arraycolsep{1pt}L_\ast \left(\begin{array}{c} \veta \\\kk \end{array};\log \frac{m}{\pi} + \gamma -\frac{\pi i}{2}  \right) +O\left(\frac{\log^J m}{m}\right) \quad (m\rightarrow \infty)\]
with a positive integer $J$ depending on $\kk$.
\end{lemma}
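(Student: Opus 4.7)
The plan is to reduce the lemma to two simpler asymptotic statements via the algebra-homomorphism formalism, closely paralleling the proof of the asymptotic expansion for $L_m\binom{\veta}{\kk}$ recalled just before the theorem. First I would observe that the stuffle derivation of Proposition \ref{prop:stuffle_zm} goes through verbatim for $z_m^+$---the only input is that the index set $\{1,\ldots,\lfloor m/2\rfloor\}$ is totally ordered---so that the $\Q$-linear map $\mathfrak{z}_m^+ : \mathfrak{A}^1_\ast \to \C$ defined by $\mathfrak{z}_m^+(e_{k_1,\eta_1}\cdots e_{k_r,\eta_r}) = z_m^+\binom{\veta}{\kk}$ is an algebra homomorphism. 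Using the isomorphism $\mathfrak{A}^1_\ast \cong \mathfrak{A}^0_\ast[e_{1,1}]$, I write any word as $w = \sum_{j=0}^n w_j \ast e_{1,1}^{\ast j}$ with $w_j \in \mathfrak{A}^0$; then $\mathfrak{z}_m^+(w) = \sum_j \mathfrak{z}_m^+(w_j)\, \mathfrak{z}_m^+(e_{1,1})^j$, while by construction $L_\ast(w)(T) = \sum_j L(w_j)\, T^j$. Consequently the lemma reduces to
\[
\text{(a)}\ \mathfrak{z}_m^+(e_{1,1}) = \log\tfrac{m}{\pi} + \gamma - \tfrac{\pi i}{2} + O\Bigl(\tfrac{\log m}{m}\Bigr), \qquad \text{(b)}\ \mathfrak{z}_m^+(w) = L(w) + O\Bigl(\tfrac{\log^J m}{m}\Bigr) \text{ for admissible } w\in\mathfrak{A}^0,
\]
and these combine consistently since the product of a polynomial of bounded degree in $\log m$ with an error of size $O(\log^J m/m)$ remains of the required form.

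Claim (a) I would verify by direct computation. From the elementary identity $(1-e^{i\theta})^{-1} = \tfrac{1}{2} + \tfrac{i}{2}\cot(\theta/2)$ applied to $\theta = 2\pi n/m$,
\[
z_m^+\binom{1}{1} = (1-\zeta_m)\left(\frac{\lfloor m/2\rfloor}{2} + \frac{i}{2}\sum_{n=1}^{\lfloor m/2\rfloor}\cot\frac{\pi n}{m}\right).
\]
I would then extract the asymptotic of the cotangent sum by adding and subtracting $(m/\pi)\sum_n 1/n$: the first piece gives $(m/\pi)(\log(m/2)+\gamma) + O(1)$, while the Riemann sum $(\pi/m)\sum_n\bigl[\cot(\pi n/m) - m/(\pi n)\bigr]$ converges to $\int_0^{\pi/2}(\cot x - 1/x)\,dx = -\log(\pi/2)$ with error $O(1/m)$ (the integrand extends smoothly to $[0,\pi/2]$). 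Combining with $(1-\zeta_m)(i/2) = \pi/m + O(1/m^2)$ and $(1-\zeta_m)\lfloor m/2\rfloor/2 = -\pi i/2 + O(1/m)$ produces precisely the claimed expansion.

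For (b) I would use the splitting
\[
z_m^+\binom{\veta}{\kk} - L\binom{\veta}{\kk} = \Bigl(z_m^+\binom{\veta}{\kk} - L_{\lfloor m/2\rfloor+1}\binom{\veta}{\kk}\Bigr) + \Bigl(L_{\lfloor m/2\rfloor+1}\binom{\veta}{\kk} - L\binom{\veta}{\kk}\Bigr).
\]
The second bracket is $O((\log^J m)/m)$ by standard partial-summation bounds on tails of admissible colored MZVs. For the first bracket, the pointwise expansion $[n] = n(1+O(n/m))$ combined with the uniform lower bound $|[n]|\ge 2n/\pi$ (from $\sin x\ge 2x/\pi$ on $(0,\pi/2]$) and the telescoping identity $\prod_a A_a - \prod_a B_a = \sum_b (A_b - B_b)\prod_{a<b}A_a\prod_{a>b}B_a$ applied to $A_a = 1/[m_a]^{k_a}$, $B_a = 1/m_a^{k_a}$ reduce the estimate to one-coordinate differences $\sum_{n\le m/2}\eta^n(1/[n]^k - 1/n^k)$ weighted by the remaining factors. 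The main obstacle lies in bounding these sums sharply when $k=1$ and $\eta \neq 1$: the coordinate-wise bound $|1/[n] - 1/n|\le Cn/(m\cdot n)$ yields only $O(\log m)$ upon summation, so one must exploit oscillation via Abel summation against the bounded character partial sums $\sum_{n\le N}\eta^n = O(1)$ to gain the missing factor $1/m$. This is the colored analogue of the familiar oscillatory-sum argument used in the asymptotic analysis of multiple polylogarithms at roots of unity.
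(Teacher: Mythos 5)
Your proposal follows essentially the same structure as the paper's proof: reduce via the harmonic-product homomorphism for $z_m^+$ and the decomposition $\mathfrak{A}^1_\ast\cong\mathfrak{A}^0_\ast[e_{1,1}]$ to the admissible case plus the depth-one non-admissible base case, and handle the delicate $k_1=1$, $\eta_1\neq1$ admissible subcase by Abel summation against bounded character partial sums. The only notable difference is that you prove the base-case asymptotics $z_m^+\binom{1}{1}=\log\tfrac{m}{\pi}+\gamma-\tfrac{\pi i}{2}+O(\log m/m)$ directly via the cotangent identity and a Riemann-sum estimate for $\int_0^{\pi/2}\bigl(\cot x-\tfrac1x\bigr)\,dx=-\log\tfrac{\pi}{2}$, whereas the paper factors out the prefactor $\bigl(e^{\pi i/m}\tfrac{m}{\pi}\sin\tfrac{\pi}{m}\bigr)^{k_1+\cdots+k_r}$ to pass to the auxiliary quantity $A_m^+$ built from the functions $g_k$ and cites Lemma 2.8 of \cite{BTT18} for the same fact.
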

\begin{proof}
For an index $\kk=(k_1,\ldots,k_r) \in \Z_{>0}^r$ and $\veta =(\eta_1,\ldots,\eta_r)  \in \mu_N^r$, we use
\[ \frac{1-e^{\frac{2\pi i}{m}}}{1-e^{\frac{2\pi i m_a}{m}}} e^{\frac{\pi i}{m}(m_a-1)} = \frac{e^{-\frac{\pi i}{m}}-e^{\frac{\pi i}{m}}}{e^{-\frac{\pi i m_a}{m}}-e^{\frac{\pi im_a}{m}}} = \frac{\sin \frac{\pi}{m}}{\sin \frac{\pi m_a}{m}},\]
to obtain
\begin{align*}\label{eq:H_N^+} 
\notag z_m^+\binom{\veta}{\kk} &= \sum_{\frac{m}{2}\ge  m_1>\cdots>m_r>0} \prod_{a=1}^r \eta_a^{m_a} \left( \frac{1-e^{\frac{2\pi i}{m}}}{1-e^{\frac{2\pi i m_a}{m}}}\right)^{k_a}\\
&=  \sum_{\frac{m}{2}\ge m_1>\cdots>m_r>0} \prod_{a=1}^r \eta_a^{m_a} e^{-\frac{\pi i}{m}(m_a-1)k_a}\left( \frac{\sin \frac{\pi}{m}}{\sin \frac{\pi m_a}{m}} \right)^{k_a}\\
&=\left( e^{\frac{\pi i}{m}} \frac{m}{\pi} \sin \frac{\pi}{m} \right)^{k_1+\cdots+k_r} \sum_{\frac{m}{2}\ge m_1>\cdots>m_r>0} \prod_{a=1}^r \eta_a^{m_a}  e^{ -\frac{\pi i }{m} m_a k_a}\frac{\pi^{k_a}}{m^{k_a}}   \left(\sin \frac{\pi m_a}{m}\right)^{-k_a}.
\end{align*}
Letting
\begin{equation}\label{eq:g} 
g_k(x) = e^{-ixk}x^k\left(\sin x\right)^{-k} \quad \mbox{and} \quad A_m^+\binom{\eta_1,\ldots,\eta_r}{k_1,\ldots,k_r} = \sum_{\frac{m}{2}\ge m_1>\cdots >m_r>0} \prod_{a=1}^r \eta^{m_a}_a\frac{ g_{k_a}\left(\frac{\pi m_a}{m} \right)}{m_a^{k_a}},
\end{equation}
we have 
\[  z_m^+\binom{\eta_1,\ldots,\eta_r}{k_1,\ldots,k_r} = \left( e^{\frac{\pi i}{m}} \frac{m}{\pi} \sin \frac{\pi}{m} \right)^{k_1+\cdots+k_r} A_m^+\binom{\eta_1,\ldots,\eta_r}{k_1,\ldots,k_r}.\]

We now prove 
\begin{equation}\label{eq:asym_conv} 
A_m^+\binom{\eta_1,\ldots,\eta_r}{k_1,\ldots,k_r} =L\binom{\eta_1,\ldots,\eta_r}{k_1,\ldots,k_r}+O\left(\frac{\log^J m}{m}\right)  \quad (m\rightarrow \infty) 
\end{equation}
for the case $k_1=1$ and $\eta_1\neq 1$. 
The case $k_1\ge2$ is obtained from the same argument with Lemma 2.7 of \cite{BTT18}, so is omitted.
Let $R=[\frac{m}{2}]$.
One has
\begin{align*}
&A_m^+\binom{\eta_1,\eta_2,\ldots,\eta_r}{1,k_2,\ldots,k_r} - L\binom{\eta_1,\eta_2,\ldots,\eta_r}{1,k_2,\ldots,k_r} \\
&=\sum_{R\ge m_1>\cdots >m_r>0}\frac{  \prod_{a=1}^r \eta_a^{m_a} g_{k_a}\left(\frac{\pi m_a}{m} \right)-1}{m_1m_2^{k_2}\cdots m_r^{k_r}} -\sum_{m_1>R}\frac{\eta_1^{m_1}}{m_1} \sum_{m_1>\cdots >m_r>0} \prod_{a=2}^r \frac{   \eta_a^{m_a} }{m_a^{k_a}} ,
\end{align*}
where $g_{k_1}$ is to be $g_1$ (we keep it for convenience).
We write $I_1$ and $I_2$ for the above first and second term, respectively. 
We use the standard method of Abel's summation.
Let $S(n)=\sum_{a=1}^n \eta_1^a$ with $S(0)=0$.
For $I_1$, one computes
\begin{align*}
I_1&= \sum_{R\ge m_1>\cdots >m_r>0}\frac{ (S(m_1)-S(m_1-1))\eta_2^{m_2}\cdots \eta_r^{m_r} \left(\prod_{a=1}^r  g_{k_a}\left(\frac{\pi m_a}{m} \right)-1\right)}{m_1m_2^{k_2}\cdots m_r^{k_r}} \\
&= \sum_{R\ge m_1>\cdots >m_r>0}\frac{ S(m_1)\eta_2^{m_2}\cdots \eta_r^{m_r} \left(\prod_{a=1}^r  g_{k_a}\left(\frac{\pi m_a}{m} \right)-1\right)}{m_1m_2^{k_2}\cdots m_r^{k_r}} \\
&- \sum_{R\ge m_1+1>\cdots >m_r>0}\frac{ S(m_1)\eta_2^{m_2}\cdots \eta_r^{m_r} \left(\prod_{a=1}^r  g_{k_a}\left(\frac{\pi (m_a+\delta_{a,1})}{m} \right)-1\right)}{(m_1+1)m_2^{k_2}\cdots m_r^{k_r}} \\
&= \left( \sum_{R> m_1>\cdots >m_r>0} + \sum_{R= m_1>\cdots >m_r>0}\right) \frac{ S(m_1)\eta_2^{m_2}\cdots \eta_r^{m_r} \left(\prod_{a=1}^r  g_{k_a}\left(\frac{\pi m_a}{m} \right)-1\right)}{m_1m_2^{k_2}\cdots m_r^{k_r}}  \\
&-\left( \sum_{R> m_1>m_2>\cdots >m_r>0} + \sum_{R> m_1=m_2>\cdots >m_r>0} \right) \frac{ S(m_1)\eta_2^{m_2}\cdots \eta_r^{m_r} \left(\prod_{a=1}^r  g_{k_a}\left(\frac{\pi (m_a+\delta_{a,1})}{m} \right)-1\right)}{(m_1+1)m_2^{k_2}\cdots m_r^{k_r}}\\
&=\sum_{R> m_1>m_2>\cdots >m_r>0} \frac{ S(m_1)\eta_2^{m_2}\cdots \eta_r^{m_r} }{m_2^{k_2}\cdots m_r^{k_r}} \left(\frac{ \prod_{a=1}^r  g_{k_a}\left(\frac{\pi m_a}{m} \right)-1 }{m_1}- \frac{ \prod_{a=1}^r  g_{k_a}\left(\frac{\pi (m_a+\delta_{a,1})}{m} \right)-1}{m_1+1}\right)\\
&+  \sum_{R= m_1>\cdots >m_r>0}\frac{ S(m_1)\eta_2^{m_2}\cdots \eta_r^{m_r} \left(\prod_{a=1}^r  g_{k_a}\left(\frac{\pi m_a}{m} \right)-1\right)}{m_1m_2^{k_2}\cdots m_r^{k_r}} \\
&- \sum_{R> m_1=m_2>\cdots >m_r>0}\frac{ S(m_1)\eta_2^{m_2}\cdots \eta_r^{m_r} \left(\prod_{a=1}^r  g_{k_a}\left(\frac{\pi (m_a+\delta_{a,1})}{m} \right)-1\right)}{(m_1+1)m_2^{k_2}\cdots m_r^{k_r}} .
\end{align*}
We write $I_{1,1},I_{1,2}$ and $I_{1,3}$ for the above first, second and third term, respectively. 
Since $g_{k}(x) = 1-ikx+o(x^2)$ $(x\rightarrow +0)$, there exists a positive constant $C$ depending on $k$ such that $|g_{k}(\pi m_a/m) -1 | \le Cm_a/m$ for all integers $m_a$ and $m$ satisfying $m/2\ge m_a>0$.
Since 
\begin{equation}\label{eq:prod_to_sum}
\prod_{a=1}^r g_{k_a}\left(\frac{\pi m_a}{m}\right) -1= \sum_{a=1}^r \left(g_{k_a}\left(\frac{\pi m_a}{m}\right)-1\right) \prod_{b=a+1}^{r} g_{k_b}\left(\frac{\pi m_b}{m}\right) ,
\end{equation}
it follows from $0<|g_{k}(x)| = \left|\frac{x}{\sin x}\right|^k \le \left(\frac{\pi}{2}\right)^k$ on the interval $(0,\frac{\pi}{2}]$ that for any $m_1,\ldots,m_r\le R$ we have 
\begin{align*}
\left| \prod_{a=1}^r g_{k_a}\left(\frac{\pi m_a}{m}\right) -1\right| \le \frac{C_1}{m} \sum_{a=1}^r m_a
\end{align*}
with $C_1>0$ depending on $\kk$. 
Since $S(m)$ is periodic, it is bounded by a positive constant $C_2$ for any $m$.
Setting $m_1=R$, we have
\begin{align*}
\left| I_{1,2} \right| &=\left| \frac{S(R)}{R} \sum_{R>m_2>\cdots >m_r>0}\frac{ \eta_2^{m_2}\cdots \eta_r^{m_r} \left(\prod_{a=1}^r  g_{k_a}\left(\frac{\pi m_a}{m} \right)-1\right)}{m_2^{k_2}\cdots m_r^{k_r}} \right|\\
&\le \frac{C_2}{R} \sum_{R>m_2>\cdots >m_r>0}\frac{\left| \prod_{a=1}^r  g_{k_a}\left(\frac{\pi m_a}{m} \right)-1\right|}{m_2^{k_2}\cdots m_r^{k_r}}\\
&\le \frac{C_1C_2}{mR} \sum_{a=1}^r \sum_{R>m_2>\cdots >m_r>0}\frac{  m_a}{m_2^{k_2}\cdots m_r^{k_r}}.
\end{align*}
Using the inequalities
\[ \frac{m_a }{m_1m_2^{k_2}\cdots m_r^{k_r}}\le  \frac{1}{m_2^{k_2}\cdots m_r^{k_r}} \quad (\mbox{for}\ r\ge a\ge 1, \, m_1>m_2>\cdots>m_r>0) \]
and 
\[ \sum_{m>m_1>0}\frac{1}{m_1} \le 2\log m,\]
we see that
\begin{align*}
\left| I_{1,2} \right| &\le \frac{rC_1C_2}{m}\sum_{R>m_2>\cdots >m_r>0}\frac{  1}{m_2^{k_2}\cdots m_r^{k_r}}\\
&\le \frac{rC_1C_2}{m} \left(\sum_{R>s>0} \frac{1}{s} \right)^{r-1} \le \frac{rC_1C_2}{m}(2\log R)^{r-1}.
\end{align*}
Thus, $\left| I_{1,2} \right|=O(m^{-1}(\log m)^{r-1})$.
For $I_{1,3}$ one computes
\begin{align*}
\left| I_{1,3}\right|&=\left| \sum_{R> m_1=m_2>\cdots >m_r>0}\frac{ S(m_1)\eta_2^{m_2}\cdots \eta_r^{m_r} \left(\prod_{a=1}^r  g_{k_a}\left(\frac{\pi (m_a+\delta_{a,1})}{m} \right)-1\right)}{(m_2+1)m_2^{k_2}\cdots m_r^{k_r}} \right|\\
&\le C_2 \sum_{R> m_1=m_2>\cdots >m_r>0}\frac{ \left|\prod_{a=1}^r  g_{k_a}\left(\frac{\pi (m_a+\delta_{a,1})}{m} \right)-1\right|}{m_2^{k_2+1}m_3^{k_3}\cdots m_r^{k_r}} \\
&\le \frac{C_2}{m} \sum_{R> m_2>\cdots >m_r>0}\frac{ 2m_2+1+m_3+\cdots+m_r}{m_2^{k_2+1}m_3^{k_3}\cdots m_r^{k_r}} \\
&\le \frac{C_2'}{m} \sum_{R> m_2>\cdots >m_r>0}\frac{ 1}{m_2^{k_2}m_3^{k_3}\cdots m_r^{k_r}}\quad (\mbox{with}\ C_2'>0), 
\end{align*}
and hence, $\left| I_{1,3}\right|=O(m^{-1}(\log m)^{r-1})$.
From \eqref{eq:prod_to_sum} the term $I_{1,1}$ can be reduced to
\begin{align*}
&\sum_{R> m_1>m_2>\cdots >m_r>0} \frac{ S(m_1)\eta_2^{m_2}\cdots \eta_r^{m_r} }{m_2^{k_2}\cdots m_r^{k_r}} \left(\frac{ g_1\left(\frac{\pi m_1}{m} \right) -1}{m_1} - \frac{ g_{1}\left(\frac{\pi (m_1+1)}{m} \right)-1}{m_1+1}\right)\prod_{a=2}^r  g_{k_a}\left(\frac{\pi m_a}{m} \right)\\
&+\sum_{R> m_1>m_2>\cdots >m_r>0} \frac{ S(m_1)\eta_2^{m_2}\cdots \eta_r^{m_r} }{m_2^{k_2}\cdots m_r^{k_r}} \left(\frac{ 1}{m_1} - \frac{1}{m_1+1}\right) \sum_{a=2}^r \left(g_{k_a}\left(\frac{\pi m_a}{m} \right)-1\right) \prod_{b=a+1}^r  g_{k_b}\left(\frac{\pi m_b}{m} \right).
\end{align*}
We write $I_{1,1,1}$ and $I_{1,1,2}$ for the above first and second term, respectively. 
For $I_{1,1,1}$, let $f(x)= x^{-1}-(\tan x)^{-1}$.
Then we have
\begin{align*}
|I_{1,1,1}| &\le \frac{C}{m} \sum_{R> m_1>m_2>\cdots >m_r>0} \frac{ 1}{m_2^{k_2}\cdots m_r^{k_r}} \left|f\left(\frac{\pi (m_1+1)}{m}\right)-f\left(\frac{\pi m_1}{m}\right) \right|\\
&= \frac{C}{m} \sum_{R> m_2>\cdots >m_r>0} \frac{ 1}{m_2^{k_2}\cdots m_r^{k_r}} \left|f\left(\frac{\pi R}{m}\right)-f\left(\frac{\pi (m_2+1)}{m}\right) \right|
\end{align*}
with $C>0$.
Since the function $f(x)$ is positive and increasing on the interval $[0,\frac{\pi}{2}]$ ($f(\pi/2)=\pi/2$ is maximal), $f\left(\frac{\pi R}{m}\right)-f\left(\frac{\pi (m_2+1)}{m}\right) $ is bounded by a positive constant for all $0<m_2<R$.
Thus, we get $\left| I_{1,1,1}\right|=O(m^{-1}(\log m)^{r-1})$.
For $I_{1,1,2}$, it can be shown that there exists a positive constant $C_1'$ such that
\begin{align*}
|I_{1,1,2}|& \le \frac{C_1'C_2}{m} \sum_{R> m_1>m_2>\cdots >m_r>0} \frac{ 1 }{m_1^2m_2^{k_2}\cdots m_r^{k_r}}  \sum_{a=2}^r m_a\\
&\le \frac{rC_1'C_2}{m} \sum_{R> m_1>m_2>\cdots >m_r>0} \frac{ 1 }{m_1m_2^{k_2}\cdots m_r^{k_r}} .
\end{align*}
So $\left| I_{1,1,2}\right|=O(m^{-1}(\log m)^{r})$.
Since $\left| I_{1}\right|\le \left| I_{1,1,1}\right|+\left| I_{1,1,2}\right|+\left| I_{1,2}\right|+\left| I_{1,3}\right|$ we obtain 
\[ \left| I_{1}\right|=O\left(\frac{(\log m)^r}{m}\right) \quad (m\rightarrow \infty).\]

For $I_2$, one has
\begin{align*}
\left|I_2\right|&\le \left|\sum_{m_1>R} S(m_1)\left( \frac{1}{m_1}  -\frac{1}{m_1+1}\right)\sum_{m_1>\cdots>m_r}\prod_{a=2}^r \frac{\eta_a^{m_a}}{m_a^{k_a}}\right| \\
&+\left|\frac{S(R)}{R+1}\sum_{R>m_2>\cdots>m_r}\prod_{a=2}^r \frac{\eta_a^{m_a}}{m_a^{k_a}} \right|\\
&\le  C \left( \sum_{m_1>\cdots >m_r>0 } - \sum_{R>m_1>\cdots>m_r>0} \right) \frac{1}{m_1^2m_2\cdots m_r} + C'\frac{(\log m)^{r-1}}{m}
\end{align*}
for some $C,C'>0$, which shows $\left|I_2\right|=O(m^{-1}(\log m)^r)$.
As a result, we obtain
\[\left| A_m^+\binom{\eta_1,\eta_2,\ldots,\eta_r}{1,k_2,\ldots,k_r} - L\binom{\eta_1,\eta_2,\ldots,\eta_r}{1,k_2,\ldots,k_r}\right| =O\left(\frac{(\log m)^r}{m}\right) \quad (m\rightarrow \infty),\]
so does \eqref{eq:asym_conv}.

By a similar argument with the definition of the harmonic regularized multiple zeta values in Section 2.4, the desired result is deduced from \eqref{eq:asym_conv}, Lemma 2.8 of \cite{BTT18}, that is,
\begin{equation}\label{eq:A_m^+1} 
A_m^+\binom{1}{1} =  \setlength\arraycolsep{1pt}L_\ast \left(\begin{array}{c} 1 \\ 1 \end{array};\log \frac{m}{\pi} + \gamma -\frac{\pi i}{2}  \right) +O\left(\frac{\log^J m}{m}\right)  \quad (m\rightarrow \infty),
\end{equation}
and the harmonic product formula for $z_m^+$ which is the same form with $L_m$.
\end{proof}
 
\begin{remark}\label{rem:bradley_zhao}
The proof of \eqref{eq:asym_conv} works for other models of multiple harmonic $q$-sums of level $N$.
For an index $\binom{\veta}{\kk}=\binom{\eta_1,\ldots,\eta_r}{k_1,\ldots,k_r}\in \mu_N^r\times \Z_{>0}^r$ and ${\bf a}=(a_1,\ldots,a_r)\in \R^r$, define 
\[\setlength\arraycolsep{1pt} z_m^+  \left(\begin{array}{c} \veta \\ \kk \end{array} ;{\bf a};q\right) = \sum_{\frac{m}{2}\ge m_1>\cdots>m_r>0} \prod_{j=1}^r \frac{\eta_j^{m_j}q^{a_jm_j}}{[m_j]^{k_j}}.\]
By a similar argument to the proof of Lemma \ref{lem:asym_zm+}, one can prove
\[\setlength\arraycolsep{1pt} z_m^+  \left(\begin{array}{c} \veta \\ \kk \end{array} ;{\bf a};e^{\frac{2\pi i}{m}}\right) =  L\binom{\veta}{\kk}+ O\left(\frac{\log^J m}{m}\right) \quad (m\rightarrow \infty) \]
for all admissible index $\tbinom{\veta}{\kk}$.
Namely, if $\tbinom{\veta}{\kk}$ is admissible, the index ${\bf a}$ does not contribute to the above asymptotic formula.
For non-admissible cases, their asymptotic formulas depend on choices of ${\bf a}$.
However, for the case ${\bf a}=(k_1-1,\ldots,k_r-1)$ (the Bradley-Zhao model \eqref{eq:bd_model}) one can show the same asymptotic formula 
\[ \setlength\arraycolsep{1pt}z_m^+\left(\begin{array}{c} \veta \\ \kk \end{array} ;{\bf a};e^{\frac{2\pi i}{m}}\right)=  \setlength\arraycolsep{1pt}L_\ast \left(\begin{array}{c} \veta \\\kk \end{array};\log \frac{m}{\pi} + \gamma -\frac{\pi i}{2}  \right) +O\left(\frac{\log^J m}{m}\right) \quad (m\rightarrow \infty)\]
with the case ${\bf a}=(0,\ldots,0)$.
The proof is done by first replacing $g_k(x)$ in \eqref{eq:g} with $g_k(x)=e^{ix(k-2)}x^k (\sin x)^{-k}$ and then doing the same computations as in the proof of Lemme \ref{lem:asym_zm+}.
Key ingredients in this case are that \eqref{eq:A_m^+1} does not change and that the harmonic product formula for $z_m^+$, which can be derived from the same argument to the proof of Proposition 2.9 of \cite{BTT18}, has the same shape with the one for $L_m$ modulo the terms $O\left(\frac{\log m}{m}\right)$.
\end{remark}

\section{Symmetric colored multiple zeta value}

\subsection{Definition}

Replacing the term $\log \frac{m}{\pi} + \gamma $ in Theorem \ref{thm:asym_zm} with a variable $T$, we are led to the following definition.

\begin{definition}\label{def:SCMZV}
Let $\alpha\in \Z/N\Z$.
For each index $\binom{\veta}{\kk}=\binom{\eta_1,\ldots,\eta_r}{k_1,\ldots,k_r}\in\mu_N^r \times \Z_{>0}^r$, we define the \emph{symmetric colored multiple zeta value of level $N$} $L^{\mathcal{S}}_\alpha \binom{\veta}{\kk} $ by
\begin{align*} 
&L^{\mathcal{S}}_\alpha \binom{\veta}{\kk}= L^{\mathcal{S}}_\alpha \setlength\arraycolsep{1pt}\left(\begin{array}{c}\veta\\\kk\end{array};T\right)\\
&=  \sum_{j=0}^r (-1)^{k_1+\cdots+k_j} (\eta_1\cdots \eta_j)^\alpha \setlength\arraycolsep{1pt} L_\ast \left(\begin{array}{c}\overline{\eta}_j,\ldots,\overline{\eta}_1\\k_j,\ldots,k_1\end{array};T+\frac{\pi i}{2}\right) \setlength\arraycolsep{1pt} L_\ast \left(\begin{array}{c}\eta_{j+1},\ldots,\eta_r\\k_{j+1},\ldots,k_r\end{array};T-\frac{\pi i}{2}\right).
\end{align*}
\end{definition}

As an example, the symmetric colored multiple zeta value of depth 1 is given by
\[ L^{\mathcal{S}}_\alpha\binom{\eta}{k} = \setlength\arraycolsep{1pt} L_\ast \left(\begin{array}{c}\eta\\k\end{array};T-\frac{\pi i}{2}\right)  + (-1)^k \eta^\alpha \setlength\arraycolsep{1pt} L_\ast \left(\begin{array}{c}\overline{\eta}\\k\end{array};T+\frac{\pi i}{2}\right) .\]
So, $L_\alpha^{\mathcal{S}}\tbinom{1}{1}= -\pi i$ and $L^{\mathcal{S}}_\alpha\tbinom{\eta}{k} =  L\tbinom{\eta}{k}+(-1)^k\eta^\alpha L\tbinom{\overline{\eta}}{k}$ if $(k,\eta)\neq(1,1)$.

Our symmetric colored multiple zeta values are apparently defined as elements in the polynomial ring $\C[T]$, but we will prove their independence from $T$ (Theorem \ref{thm:sym_MZV} below).
As a consequence, we see that the limit $\displaystyle\lim_{m\rightarrow\infty} z_{mN+\alpha} \left(\begin{smallmatrix} \veta \\ \kk \end{smallmatrix};e^{\frac{2\pi i}{mN+\alpha}}  \right)$ exists and its value coincides with $L^{\mathcal{S}}_\alpha \binom{\veta}{\kk} $ (Theorem \ref{thm:connection_smzv} below).

In what follows, we first review the regularization relation of colored multiple zeta values from Racinet \cite{R02} (see also \cite{AK04}), and then prove the independence from $T$.

\subsection{Iterated integral expression of the colored multiple zeta value}

The colored multiple zeta values can be written as iterated integrals.
We let
\[ \omega_{0}(t)=\frac{dt}{t}\quad \mbox{and} \quad \omega_{a}(t) = \frac{dt}{a^{-1}-t} \quad (a\in \C-\{0\}).\]
For $a_1,\ldots, a_k\in\C$ and a path $\gamma:[0,1]\rightarrow \C$, we consider the possibly divergent integrals
\begin{align*}
&I_{\gamma} (a_1,\ldots,a_k) =\int_{0}^{1} \gamma^\ast \omega_{a_1}(t_1) \cdots \int_{0}^{t_{k-2}} \gamma^\ast\omega_{a_{k-1}}(t_{k-1}) \int_{0 }^{t_{k-1}}\gamma^\ast\omega_{a_k}(t_k).
\end{align*}
For $(a,b)\in \C^2$, we denote by $[a,b]$ the path $t\rightarrow a+t(b-a)$.
By expanding $1/(a^{-1}-t)$ into the geometric series and performing the integral repeatedly, we have the following proposition (see \cite[Theorem 2.2]{G98}, \cite[Proposition 2.2.2]{R02} and \cite[Eq. (5)]{AK04}).

\begin{proposition}\label{prop:iterated_integral}
Let $k_1,\ldots,k_r$ be positive integers and $z_1,\ldots,z_r\in\C$  satisfying $0<|z_i|\le 1 \ (1\le i\le r)$.
For all $z$ in the open unit disc in $\C$, we have
\[ L\binom{zz_1,z_2,\ldots,z_r}{k_1,k_2,\ldots,k_r} = I_{[0,z]}(0^{k_1-1},z_1,0^{k_2-1},z_1z_2,\ldots, 0^{k_r-1},z_1\cdots z_r),\]
where $0^k$ means a sequence of zeros repeated $k$ times.
In particular, when $z_1=\eta_1,z_2=\eta_2/\eta_1 ,\ldots,z_r=\eta_r/\eta_{r-1}$ with $\eta_1,\ldots,\eta_r\in \mu_N$ and $\binom{\eta_1}{k_1}\neq \binom{1}{1}$, the limit $z\rightarrow 1$ exists on both sides and we have
\[ L\binom{\eta_1,\frac{\eta_2}{\eta_1},\ldots,\frac{\eta_r}{\eta_{r-1}}}{k_1,k_2,\ldots,k_r} = I_{[0,1]}(0^{k_1-1},\eta_1,0^{k_2-1},\eta_2,\ldots, 0^{k_r-1},\eta_r).\]
\end{proposition}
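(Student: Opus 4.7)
The plan is to reduce both sides to power series in $z$ and compare coefficients. Parametrize the path $[0,z]$ by $\gamma:[0,1]\to\C,\ \gamma(t)=zt$. The pullbacks are $\gamma^{\ast}\omega_0=dt/t$ and, for $a\neq 0$, the geometric expansion
\[
\gamma^{\ast}\omega_a(t)=\frac{z\,dt}{a^{-1}-zt}=\sum_{n\ge 1}(az)^n\,t^{n-1}\,dt,
\]
valid for $|az|<1$. I will combine this with the trivial identities $\int_0^t (ds/s)\,s^m=t^m/m$ and $\int_0^t s^{n-1}\,ds=t^n/n$, applied repeatedly from the innermost integration outward.

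Writing $y_j:=z_1z_2\cdots z_j$, I would prove by induction on the number of blocks processed that after integrating the bottom $k_j+k_{j+1}+\cdots+k_r$ slots one obtains
\[
\sum_{m_j>\cdots>m_r\ge 1}\frac{\prod_{i=j}^{r}(y_i z)^{m_i-m_{i+1}}}{\prod_{i=j}^{r}m_i^{k_i}}\,t^{m_j}\qquad (m_{r+1}:=0).
\]
The base case (innermost block $\omega_0^{k_r-1}\omega_{y_r}$) follows directly from the two displayed integration identities; for the inductive step, the geometric expansion of $\gamma^{\ast}\omega_{y_{j-1}}$ introduces a new non-negative summation variable $n$, and setting $m_{j-1}=n+m_j$ forces the strict inequality $m_{j-1}>m_j$ together with the power $(y_{j-1}z)^{m_{j-1}-m_j}$. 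Finally, taking $j=1$ and $t=1$ and invoking the telescoping identity
\[
\prod_{j=1}^{r}y_j^{\,m_j-m_{j+1}}=z_1^{m_1}z_2^{m_2}\cdots z_r^{m_r}
\]
(since $y_j/y_{j-1}=z_j$), together with $z^{(m_1-m_2)+(m_2-m_3)+\cdots+m_r}=z^{m_1}$, recovers exactly $L\binom{zz_1,z_2,\ldots,z_r}{k_1,\ldots,k_r}$.

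For the second assertion, under the admissibility condition $\binom{\eta_1}{k_1}\neq\binom{1}{1}$ both sides are continuous as $z\uparrow 1$: the series converges by \cite[Proposition 1.1]{AK04}, and the iterated integral $I_{[0,1]}(0^{k_1-1},\eta_1,\ldots)$ converges because the outermost form is either $\omega_0$ when $k_1\ge 2$ (regular near $t=1$) or $\omega_{\eta_1}$ with $\eta_1\neq 1$ (no pole on $[0,1]$), while the innermost form $\omega_{\eta_r}$ is regular at $t=0$. An Abel-continuity argument, applied to the function of $z$ holomorphic on the open unit disc and continuous up to $z=1$, then extends the identity to $z=1$.

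The main technical obstacle is the bookkeeping in the inductive step --- in particular, disentangling how the new index $n$ from the geometric expansion of $\gamma^{\ast}\omega_{y_{j-1}}$ interacts with the existing variables $m_j,\ldots,m_r$, and verifying that the resulting product telescopes precisely to $z_1^{m_1}\cdots z_r^{m_r}$. The convergence check at $z=1$ is also delicate and must be tied directly to the admissibility hypothesis $\binom{\eta_1}{k_1}\neq\binom{1}{1}$.
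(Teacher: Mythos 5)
Your proposal is correct and follows essentially the same route the paper indicates: the paper simply says ``by expanding $1/(a^{-1}-t)$ into the geometric series and performing the integral repeatedly'' and cites \cite{G98,R02,AK04}, which is exactly the computation you carry out in detail. Your bookkeeping does go through cleanly — the substitution $m_{j-1}=n+m_j$ produces the strict ordering, the exponents telescope to $z^{m_1}z_1^{m_1}\cdots z_r^{m_r}$ as you claim, and the convergence at $z=1$ under admissibility (avoiding the pole of $\omega_{\eta_1}$ at $t=1$ and the singularity of $\omega_0$ at $t=0$) is a standard Abel-limit argument.
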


\

Since the corresponding index $\tbinom{\eta_1,\ldots,\eta_r}{k_1,\ldots,k_r}$ to a word $e_{k_1,\eta_1}\cdots e_{k_r,\eta_r}\in\mathfrak{A}^0$ is admissible, one can define the $\Q$-linear map 
\begin{align*} 
\mathcal{I} : \mathfrak{A}^0&\longrightarrow \C \\
e_{a_1}\cdots e_{a_k} &\longmapsto I_{[0,1]}(a_1,\ldots,a_k)
\end{align*}
and $\mathcal{I}(1)=1$.
The map $\mathcal{I}:\mathfrak{A}^0 \rightarrow\C$ is an algebra homomorphism (see \cite{AK04,Chen,H97}) with respect to the \emph{shuffle product} $\shuffle:\mathfrak{A}\times \mathfrak{A}\rightarrow \mathfrak{A}$ given inductively by
\[ e_{a}w\shuffle e_bw' = e_a(w\shuffle e_bw' )+ e_b(e_aw\shuffle w') \]
for $a,b\in\{0\}\cup \mu_N$ and $w,w' \in \mathfrak{A}$, with the initial condition $1 \shuffle w=w\shuffle 1 = w$.
Equipped with the shuffle product, the vector space $\mathfrak{A}$ forms a commutative $\Q$-algebra and $\mathfrak{A}^1$, $\mathfrak{A}^0$ are $\Q$-subalgebras.
We write $\mathfrak{A}_\shuffle,\mathfrak{A}^1_\shuffle,\mathfrak{A}^0_\shuffle$ for commutative $\Q$-algebras with the shuffle product.

Let ${\bf p}: \mathfrak{A}\rightarrow \mathfrak{A}$ and ${\bf q}: \mathfrak{A}\rightarrow \mathfrak{A}$ be the $\Q$-linear isomorphisms defined for integers $ r\ge0, k_1,\ldots,k_{r+1}\ge0$ and $\eta_1,\ldots,\eta_r\in \mu_N$ by
\begin{equation}\label{eq:p} 
\begin{aligned}
{\bf p}(e_0^{k_1}e_{\eta_1}\cdots e_0^{k_r}e_{\eta_r}e_0^{k_{r+1}}) =e_0^{k_1}e_{\eta_1}   e_0^{k_2}e_{\eta_1\eta_2}\cdots  e_0^{k_r}e_{\eta_1\cdots \eta_{r}} e_0^{k_{r+1}},\\
{\bf q}(e_0^{k_1}e_{\eta_1}\cdots e_0^{k_r}e_{\eta_r}e_0^{k_{r+1}}) =e_0^{k_1}e_{\eta_1}   e_0^{k_2}e_{\overline{\eta}_1\eta_2}\cdots  e_0^{k_r}e_{\overline{\eta}_{r-1} \eta_{r}} e_0^{k_{r+1}} 
\end{aligned}
\end{equation}
with ${\bf p}(1 )={\bf q}(1 )=1$.
We easily see that ${\bf q}\circ {\bf p}={\bf p}\circ {\bf q}={\rm id}$.
By Proposition \ref{prop:iterated_integral} the identity
\begin{equation}\label{eq:LI} 
\mathcal{L} =\mathcal{I}\circ {\bf p} \quad (\mbox{equivalently, } \ \mathcal{L} \circ{\bf q}=\mathcal{I})
\end{equation}
holds on $\mathfrak{A}^0$, where the $\Q$-linear map $\mathcal{L} : \mathfrak{A}^0 \longrightarrow  \C$ is defined by 
\[ \mathcal{L}(e_{k_1,\eta_1}\cdots e_{k_r,\eta_r})= L\binom{\eta_1,\ldots,\eta_r}{ k_1,\ldots,k_r}\]
and $\mathcal{L}(1)=1$.
We remark that the maps ${\bf p}$ and ${\bf q}$ preserve $\mathfrak{A}^0$.

\subsection{Extensions of the maps $\mathcal{L}$ and $\mathcal{I}$}

There are unique extensions of the maps $\mathcal{L}$ and $\mathcal{I}$, regularizing the divergent series and the divergent integrals.
These are defined to be algebra homomorphisms
\[ \hat{\mathcal{L}} (\cdot; T) : \mathfrak{A}^1_\ast \rightarrow \C[T]\quad \mbox{and}\quad \hat{\mathcal{I}} (\cdot; T) : \mathfrak{A}_\shuffle \rightarrow \C[T] \]
such that the maps $\hat{\mathcal{L}}$ and $\hat{\mathcal{I}} $ extend the algebra homomorphisms $\mathcal{L}:\mathfrak{A}^0_\ast \rightarrow \C$ and $\mathcal{I}:\mathfrak{A}^0_\shuffle \rightarrow \C$, respectively, and send $\hat{\mathcal{I}}(e_1)=\hat{\mathcal{L}} (e_{1,1})=T$ and $\hat{\mathcal{I}} (e_0)=0$.

We already explained how we construct the map $\hat{\mathcal{L}} (\cdot; T)$ in Section 2.4.
From this, it follows that 
\[ \hat{\mathcal{L}} (e_{k_1,\eta_1}\cdots e_{k_r,\eta_r}; T) = \setlength\arraycolsep{1pt}L_\ast \left(\begin{array}{c} \eta_1,\ldots,\eta_r \\ k_1,\ldots,k_r \end{array};T \right) \qquad (\forall e_{k_1,\eta_1}\cdots e_{k_r,\eta_r}\in \mathfrak{A}^1).\]

As for $\hat{\mathcal{I}}$, since $\mathfrak{A}_\shuffle\cong \mathfrak{A}^1_\shuffle [e_0]$ and $ \mathfrak{A}_\shuffle^1\cong \mathfrak{A}^0_\shuffle [e_1]$, any element $w\in \mathfrak{A}_\shuffle$ can be uniquely written in the form
\begin{equation*}
w= \sum_{i=0}^m \sum_{j=0}^n e_0^{\shuffle i}  \shuffle w_{i,j} \shuffle e_{1}^{\shuffle j} \quad ( m=\deg_{e_0} w,\ n=\deg_{e_1} w,  \ w_{i,j}\in \mathfrak{A}^0).
\end{equation*}
With this, the polynomial $\hat{\mathcal{I}}(w;T)$ is given by
\[ \hat{\mathcal{I}}(w;T) = \sum_{j=0}^n \mathcal{I}(w_{0,j}) T^j,\]
which is the unique polynomial satisfying for $e_{k_1,\eta_1}\cdots e_{k_r,\eta_r}\in \mathfrak{A}^1$
\[ \lim_{z\rightarrow 1-0}(1-z)^{-\delta} (I_{[0,z]}(0^{k_1-1},\eta_1,\ldots,0^{k_r-1},\eta_r) - \hat{\mathcal{I}}(e_{k_1,\eta_1}\cdots e_{k_r,\eta_r};-\log(1-z)) )=0\]
with some $\delta>0$.
Note that since $\hat{\mathcal{I}}(e_0;T)=0$, we have  $\hat{\mathcal{I}}(w;T) = \hat{\mathcal{I}}({\rm reg}_0(w);T)$, where we denote by
\[ {\rm reg}_0:\mathfrak{A}_\shuffle \cong \mathfrak{A}_\shuffle^1[e_0] \rightarrow \mathfrak{A}^1_\shuffle\]
the algebraic projection that sends $e_0$ to 0.

For example, it follows that
\begin{equation*}\label{eq:shuffle-reg-ex}
\hat{\mathcal{I}}(e_1;T)=T, \quad \hat{\mathcal{I}}(e_0^{n-1}e_1;T)=I_{[0,1]}(0^{n-1},1)= \zeta(n) \quad (n\ge2).
\end{equation*}
For the details, we refer to e.g. \cite[\S2.2]{AK04}, \cite[\S2]{R02} and \cite[\S2.9-10]{G98}.

\subsection{Dual setup}

In order to describe the regularization relation by Racinet \cite{R02}, we shall work on a dual space of the shuffle Hopf algebra $(\mathfrak{A},\shuffle, \Delta)$, where the coproduct $\Delta : \mathfrak{A} \rightarrow \mathfrak{A}\otimes \mathfrak{A}$ is defined as the deconcatenation given by $\Delta(e_{a_1}\cdots e_{a_r} ) = \sum_{j=0}^r e_{a_1}\cdots e_{a_j}\otimes e_{a_{j+1}}\cdots e_{a_r}$.
For structures of the shuffle Hopf algebra, we refer the reader to \cite[Appendix]{Z16}.
 
Let 
\[\mathfrak{A}^\vee=\C[T]\langle\langle \x_0,\x_{\eta}\mid \eta\in\mu_N\rangle\rangle\]
be the non-commutative power series algebra over the polynomial ring $\C[T]$, equipped with the concatenation product.
For simplicity of notation, we write $(e_{a_1}\cdots e_{a_r})^\vee = \x_{a_1}\cdots \x_{a_r}$ and $\varnothing^\vee=1$, and denote an element $S\in \mathfrak{A}^\vee$ by
\[ S = \sum_{w\in \{e_0,e_\eta\mid \eta\in \mu_N\}^\times} S_w w^\vee =S_{\varnothing}+S_{e_0}\x_0 +S_{e_1}\x_1+\cdots \quad (S_w\in \C[T]) \]
where $\{e_0,e_\eta\mid \eta\in \mu_N\}^\times$ is the set of words consisting of letters $e_0,e_\eta \  ( \eta\in \mu_N)$ with the empty word $\varnothing$.

Consider the pairing
\[ \langle,\rangle : \mathfrak{A}^\vee \otimes \mathfrak{A} \rightarrow \C[T]\]
defined for $(S,w)\in \mathfrak{A}^\vee \times \mathfrak{A} $ by $\langle S, w\rangle = S_w$.
Dualizing $\shuffle$ on $\mathfrak{A}$, we define the shuffle coproduct $\Delta_\shuffle : \mathfrak{A}^\vee \rightarrow \mathfrak{A}^\vee \widehat{\otimes} \mathfrak{A}^\vee$ by
\begin{equation}\label{eq:delta_shuffle}
\Delta_\shuffle (S) = \sum_{w_1,w_2\in\{e_0,e_\eta\mid \eta\in \mu_N\}^\times} \langle S, w_1\shuffle w_2\rangle w_1^\vee \otimes w_2^\vee.
\end{equation}
The shuffle Hopf algebra $(\mathfrak{A},\shuffle,\Delta)$ is then topologically dual to the completed Hopf algebra $(\mathfrak{A}^\vee,\cdot,\Delta_\shuffle,\sigma)$, where 
\[ \sigma: \mathfrak{A}^\vee \rightarrow \mathfrak{A}^\vee \]
is the antipode that is a continuous anti-automorpshism given by $\sigma(\x_a)=-\x_a$ for all $a\in\{0\}\cup \mu_N$.

An element $S\in \mathfrak{A}^\vee$ is called group-like if $\Delta_\shuffle(S)=S\widehat{\otimes} S$ and $S_\varnothing=1$.
We show the following standard properties on $\Delta_\shuffle$, which potentially can be found in the literature.

\begin{proposition}\label{prop:shuffle_coproduct}
i) The shuffle coproduct $\Delta_\shuffle$ is a continuous algebra homomorphism and satisfies $\Delta_\shuffle(\x_a)=1\otimes \x_a+\x_a\otimes 1$ for all $a\in \{0\}\cup  \mu_N$.\\
ii) $S$ is group-like if and only if $\langle S,w\shuffle w'\rangle = \langle S,w\rangle \langle S,w'\rangle $ holds for any $w,w'\in \mathfrak{A}$, i.e. the map $\mathfrak{A}_\shuffle \rightarrow \C[T]$ given by $w\mapsto \langle S,w\rangle$ is an algebra homomorphism. \\
iii) The set of group-like elements in $\mathfrak{A}^\vee$ forms a group with the concatenation product and with the inverse given by the antipode $\sigma$.
\end{proposition}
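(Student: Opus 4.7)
The plan is to verify each part directly from the duality between the shuffle bialgebra $(\mathfrak{A},\shuffle,\Delta)$ and its completion $\mathfrak{A}^\vee$.

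For (i), I would first compute $\Delta_\shuffle(\x_a)$ by unpacking \eqref{eq:delta_shuffle}: the coefficient $\langle \x_a, w_1\shuffle w_2\rangle$ is the coefficient of the single letter $e_a$ in $w_1\shuffle w_2$, which is nonzero only when $\{w_1,w_2\}=\{e_a,\varnothing\}$, giving $\Delta_\shuffle(\x_a)=1\otimes\x_a+\x_a\otimes 1$. To show that $\Delta_\shuffle$ is an algebra homomorphism with respect to concatenation, I would compare the coefficient of a fixed tensor $w_1^\vee\otimes w_2^\vee$ on both sides of $\Delta_\shuffle(S_1\cdot S_2)=\Delta_\shuffle(S_1)\cdot\Delta_\shuffle(S_2)$. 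The left side equals $\langle S_1S_2, w_1\shuffle w_2\rangle$, and the right side unwinds (using that concatenation on $\mathfrak{A}^\vee$ is dual to the deconcatenation $\Delta$ on $\mathfrak{A}$) to a sum over decompositions $w_i=w_i'w_i''$ of products $\langle S_1,w_1'\shuffle w_2'\rangle\langle S_2,w_1''\shuffle w_2''\rangle$. The two expressions agree iff the bialgebra compatibility
\[
\Delta(w_1\shuffle w_2)=\Delta(w_1)\shuffle\Delta(w_2)\in\mathfrak{A}\otimes\mathfrak{A}
\]
holds, which is the standard fact that $(\mathfrak{A},\shuffle,\Delta)$ is a bialgebra; it can be proved by induction on $|w_1|+|w_2|$ using the recursive definition of $\shuffle$, or cited from the references on shuffle Hopf algebras (e.g.\ \cite[Appendix]{Z16}). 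Continuity is automatic: the coefficient of any fixed $w_1^\vee\otimes w_2^\vee$ in $\Delta_\shuffle(S)$ depends only on the finitely many coefficients $S_w$ with $|w|=|w_1|+|w_2|$.

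For (ii), the claim is essentially immediate from \eqref{eq:delta_shuffle}: the coefficient of $w_1^\vee\otimes w_2^\vee$ in $\Delta_\shuffle(S)$ is $\langle S,w_1\shuffle w_2\rangle$, while the corresponding coefficient in $S\widehat\otimes S$ is $S_{w_1}S_{w_2}=\langle S,w_1\rangle\langle S,w_2\rangle$. Hence $\Delta_\shuffle(S)=S\widehat\otimes S$ is equivalent to multiplicativity of $w\mapsto\langle S,w\rangle$, and the condition $S_\varnothing=1$ matches the requirement that this linear map send the unit $\varnothing\in\mathfrak{A}^1_\shuffle$ to $1\in\C[T]$.

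For (iii), closure under concatenation follows from (i): if $S,T$ are group-like, then
\[
\Delta_\shuffle(ST)=\Delta_\shuffle(S)\cdot\Delta_\shuffle(T)=(S\widehat\otimes S)(T\widehat\otimes T)=ST\widehat\otimes ST,
\]
using the componentwise multiplication on $\mathfrak{A}^\vee\widehat\otimes\mathfrak{A}^\vee$, and $(ST)_\varnothing=S_\varnothing T_\varnothing=1$. The unit $1\in\mathfrak{A}^\vee$ is trivially group-like, giving a monoid structure. For inverses, I would invoke the antipode axiom of the Hopf algebra $\mathfrak{A}^\vee$, namely
\[
m\circ(\sigma\,\widehat\otimes\,\mathrm{id})\circ\Delta_\shuffle=u\circ\varepsilon,
\]
where $\varepsilon(S)=S_\varnothing$ and $u(1)=1$. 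Applied to a group-like $S$, this yields $\sigma(S)\cdot S=\varepsilon(S)\cdot 1=1$, and a symmetric argument with $\mathrm{id}\,\widehat\otimes\,\sigma$ gives $S\cdot\sigma(S)=1$.

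The only genuinely non-trivial ingredient is the bialgebra compatibility $\Delta(w_1\shuffle w_2)=\Delta(w_1)\shuffle\Delta(w_2)$ needed in (i); the remaining steps are formal dualizations. Accordingly, the main obstacle is keeping the dualization clean (degree-finiteness, continuity, and the sign convention $\sigma(\x_a)=-\x_a$ on each letter), rather than any deep computation.
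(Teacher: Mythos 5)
Your proof is correct, and for parts (ii) and (iii) it follows essentially the same lines as the paper's: the paper likewise treats (ii) as immediate from \eqref{eq:delta_shuffle}, and in (iii) it obtains invertibility directly from the antipode axiom $\sigma(S)S=S\sigma(S)=1$ for group-like $S$; your additional verification of closure under multiplication is harmless but was tacitly assumed in the paper.

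The genuine difference is in (i). The paper establishes multiplicativity of $\Delta_\shuffle$ by a direct, self-contained coefficient computation: starting from the defining sum for $\Delta_\shuffle(\x_a g)$ with $g$ a word, using $\langle\x_a g,e_b w\rangle=\delta_{a,b}\langle g,w\rangle$ together with the recursion $e_b u_1\shuffle e_c u_2=e_b(u_1\shuffle e_c u_2)+e_c(e_b u_1\shuffle u_2)$, and collapsing the resulting double sum to $(\x_a\otimes 1+1\otimes\x_a)\Delta_\shuffle(g)$; iterating over the letters of a word and extending by linearity and continuity finishes the argument. You instead compare coefficients of $w_1^\vee\otimes w_2^\vee$ on both sides of $\Delta_\shuffle(S_1S_2)=\Delta_\shuffle(S_1)\Delta_\shuffle(S_2)$, reducing the claim to the bialgebra compatibility $\Delta(w_1\shuffle w_2)=\Delta(w_1)\shuffle\Delta(w_2)$ on $\mathfrak{A}\otimes\mathfrak{A}$, which you then cite from the literature. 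Both are correct and not so far apart---the cited bialgebra axiom is proved by the same recursion on $\shuffle$, so the paper's computation is essentially that dualization carried out in place. The paper's route is self-contained and yields the generator formula and multiplicativity in one stroke; your route cleanly isolates the single non-formal ingredient and delegates it to a reference, at the cost of not being self-contained unless you also supply the short inductive proof of $\Delta(w_1\shuffle w_2)=\Delta(w_1)\shuffle\Delta(w_2)$.
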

\begin{proof}
i) It suffices to show that $\Delta_\shuffle (\x_a g )= \Delta_\shuffle (\x_a)\Delta_\shuffle (g)$ for all word $g\in \{\x_0,\x_\eta \mid \eta\in \mu_N\}^\times$ and $a\in\{0,\eta\mid \eta\in \mu_N\}$.
Since $\shuffle$ is graded by degree, we easily see that $\Delta_\shuffle (\x_a)= \x_a\otimes 1 + 1\otimes \x_a$.
Noting
\[ \langle \x_{a}g,e_bw\rangle = \delta_{a,b}  \langle g,w\rangle \]
for $w\in \{e_0,e_\eta\mid \eta\in \mu_N\}^\times$, we compute
\begin{align*}
\Delta_\shuffle (\x_ag )&= \sum_{w_1,w_2\in \{e_0,e_\eta\mid \eta\in \mu_N\}^\times } \langle \x_ag, w_1  \shuffle   w_2\rangle w_1^\vee\otimes w_2^\vee\\
&= \langle \x_ag,\varnothing \rangle 1\otimes 1+  \sum_{e_b u_1\in \{e_0,e_\eta\mid \eta\in \mu_N\}^\times} \langle \x_ag,   e_b u_1\rangle  \x_b u_1^\vee \otimes 1\\
&+ \sum_{e_cu_2\in \{e_0,e_\eta\mid \eta\in \mu_N\}^\times } \langle \x_ag,   e_c u_2\rangle 1\otimes \x_c u_2^\vee  \\
&+  \sum_{e_bu_1,e_cu_2\in \{e_0,e_\eta\mid \eta\in \mu_N\}^\times } \langle \x_ag,   e_bu_1\shuffle e_cu_2\rangle \x_b u_1^\vee \otimes \x_c u_2^\vee  \\
&=  0+ \x_ag \otimes 1+1\otimes \x_ag \\
&+  \sum_{u_1,e_cu_2\in \{e_0,e_\eta\mid \eta\in \mu_N\}^\times } \langle g,   u_1\shuffle e_cu_2\rangle \x_a u_1^\vee \otimes \x_c u_2^\vee \\
&+ \sum_{e_bu_1,u_2\in \{e_0,e_\eta\mid \eta\in \mu_N\}^\times } \langle g,   e_bu_1\shuffle u_2\rangle \x_b u_1^\vee \otimes \x_a u_2^\vee  \\
&=  \sum_{u_1,w_2\in \{e_0,e_\eta\mid \eta\in \mu_N\}^\times } \langle g,   u_1\shuffle w_2\rangle \x_a u_1^\vee \otimes w_2^\vee \\
&+ \sum_{w_1,u_2\in \{e_0,e_\eta\mid \eta\in \mu_N\}^\times } \langle g,   w_1\shuffle u_2\rangle w_1^\vee \otimes \x_a u_2^\vee  \\
&=\sum_{w_1,w_2\in \{e_0,e_\eta\mid \eta\in \mu_N\}^\times } \langle g,   w_1\shuffle w_2\rangle (\x_a \otimes 1+1\otimes \x_a) w_1^\vee \otimes w_2^\vee\\
&=(\x_a \otimes 1+1\otimes \x_a)\sum_{w_1,w_2\in \{e_0,e_\eta\mid \eta\in \mu_N\}^\times } \langle g,   w_1\shuffle w_2\rangle  w_1^\vee \otimes w_2^\vee\\
&= \Delta_\shuffle(\x_a)\Delta_\shuffle(g),
\end{align*}
which proves i).

ii) This follows from \eqref{eq:delta_shuffle}.

iii) We only need to show invertibility of group-like elements.
For this, it follows from the definition of the antipode that for a group-like element $S\in \mathfrak{A}^\vee$ we have $\sigma(S)S=S\sigma(S)=1$.
Hence, $\sigma(S)$ is inverse to $S$.
We complete the proof.
\end{proof}

Let $I_0=\mathfrak{A}^\vee \x_0$ be the (right) ideal of $\mathfrak{A}^\vee$ generated by $\x_0$.
It follows that the set 
\[ \{ \x_0^{k_1-1}\x_{\eta_1}\cdots \x_0^{k_r-1}\x_{\eta_r} \mid r\ge0,k_1,\ldots,k_r\ge1,\eta_1,\ldots,\eta_r\in \mu_N\}\]
forms a linear basis of $\mathfrak{A}^\vee/I_0$.
Hence, $\mathfrak{A}^\vee/I_0$ is isomorphic to the non-commutative formal power series algebra
\[ \mathfrak{A}^{1,\vee} = \C[T]\langle\langle \x_{k,\eta}\mid  k\in\N,\eta\in\mu_N\rangle\rangle\]
as vector spaces. 
Let 
\[\pi_1: \mathfrak{A}^\vee \rightarrow  \mathfrak{A}^\vee/I_0 \overset{\sim}{\rightarrow} \mathfrak{A}^{1,\vee}\] 
be the surjective linear map that sends $\x_0^{k_1-1}\x_{\eta_1}\cdots \x_0^{k_r-1}\x_{\eta_r} $ to $\x_{k_1,\eta_1}\cdots \x_{k_r,\eta_r}$.
For $S\in \mathfrak{A}^\vee$, the image $ \pi_1 (S) \in \mathfrak{A}^{1,\vee}$ can be written as
\[ \pi_1(S) = \sum_{w\in \{e_{k,\eta}\mid k\ge1, \eta\in \mu_N\}^\times} S_w w^\vee,\]
where $\{e_{k,\eta}\mid k\ge1, \eta\in \mu_N\}^\times$ denotes the set of words consisting of 
$e_{k,\eta}$ ($k\ge1, \eta\in \mu_N$) and write $\x_{k_1,\eta_1}\cdots \x_{k_r,\eta_r}=(e_{k_1,\eta_1}\cdots e_{k_r,\eta_r})^\vee$.

Dualizing the isomorphisms ${\bf p}$ and ${\bf q}$ defined in \eqref{eq:p}, we get linear isomorphisms $\tilde{\bf p}:\mathfrak{A}^\vee\rightarrow \mathfrak{A}^\vee$ and $\tilde{\bf q}:\mathfrak{A}^\vee\rightarrow \mathfrak{A}^\vee$ as follows: for $S\in \mathfrak{A}^\vee$ they are given by
\[ \tilde{\bf p} (S) =\sum_{w\in\{e_0,e_\eta\mid \eta\in \mu_N\}^\times} S_{{\bf q}(w)} w^\vee \quad\mbox{and}\quad \tilde{\bf q} (S) =\sum_{w\in\{e_0,e_\eta\mid \eta\in \mu_N\}^\times} S_{{\bf p}(w)} w^\vee.\]
It follows that $\tilde{\bf q}\circ\tilde{\bf p}=\tilde{\bf p}\circ \tilde{\bf q}={\rm id}$ and that
\begin{equation*} 
\begin{aligned}
\tilde{\bf p}(\x_0^{k_1}\x_{\eta_1}\cdots \x_0^{k_r}\x_{\eta_r}\x_0^{k_{r+1}}) =\x_0^{k_1}\x_{\eta_1}   \x_0^{k_2}\x_{\eta_1\eta_2}\cdots  \x_0^{k_r}\x_{\eta_1\cdots \eta_{r}} \x_0^{k_{r+1}},\\
\tilde{\bf q}(\x_0^{k_1}\x_{\eta_1}\cdots \x_0^{k_r}\x_{\eta_r}\x_0^{k_{r+1}}) =\x_0^{k_1}\x_{\eta_1}   \x_0^{k_2}\x_{\overline{\eta}_1\eta_2}\cdots  \x_0^{k_r}\x_{\overline{\eta}_{r-1} \eta_{r}} \x_0^{k_{r+1}} .
\end{aligned}
\end{equation*}
These induce linear isomorphisms on $\mathfrak{A}^{1,\vee}$.
Since they are commute with $\pi_1$, we also denote them by $\tilde{\bf p}:\mathfrak{A}^{1,\vee}\rightarrow \mathfrak{A}^{1,\vee}$ and $\tilde{\bf q}:\mathfrak{A}^{1,\vee}\rightarrow \mathfrak{A}^{1,\vee}$.
For example, we have
\[\tilde{\bf q} (\x_{k_1,\eta_1}\cdots \x_{k_r,\eta_r}) =\x_{k_1,\eta_1}   \x_{k_2,\overline{\eta}_1\eta_2}\cdots  
\x_{k_r,\overline{\eta}_{r-1} \eta_{r}} .\]

\subsection{Regularization relation}
Following \cite{R02}, we recall the regularization relation, which describes a difference between $\hat{\mathcal{L}}$ and $\hat{\mathcal{I}}$.

Consider
\[ \Phi_\shuffle (T) =  \sum_{w\in \{e_0,e_\eta\mid \eta\in \mu_N\}^\times} \hat{\mathcal{I}}(w;T) w^\vee \in \mathfrak{A}^\vee ,\]
and
\[ \Psi_\ast (T) =  \sum_{w\in \{e_{k,\eta}\mid k\ge1, \eta\in \mu_N\}^\times} \hat{\mathcal{L}}(w;T) w^\vee \in \mathfrak{A}^{1,\vee} ,\]
where we set $\hat{\mathcal{I}}(\varnothing;T)=\hat{\mathcal{L}}(\varnothing;T)=1$.
Since $\hat{\mathcal{I}}$ is an algebra homomorphism, i.e. $\langle \Phi_\shuffle(T),w\shuffle w'\rangle = \langle \Phi_\shuffle(T),w\rangle \langle \Phi_\shuffle(T),w'\rangle $ holds for any $w,w'\in \mathfrak{A}$, by Proposition \ref{prop:shuffle_coproduct} we see that $\Phi_\shuffle(T)$ is group-like.
A priori $\Psi_\ast(T)$ is not group-like and is different from $\pi_1\big(\Phi_\shuffle(T)\big)$ on $\mathfrak{A}^{1,\vee}$.
Their difference is described in \eqref{eq:reg_cmzv} below.

\begin{lemma}\label{lem:reg_theorem}
i) We have
\[ \Psi_\ast(T) = e^{T\x_{1,1}}\Psi_\ast(0) \quad \mbox{and}\quad \Phi_\shuffle(T) = e^{T\x_1}\Phi_\shuffle(0),\]
where $e^{T\x_{a}}=\exp(T\x_{a})=\sum_{n\ge0} \frac{\x_{a}^n}{n!}T^n$.\\
ii)
Let $\Lambda(x) = \exp\left( \sum_{n\ge2} \frac{(-1)^{n-1}}{n}\zeta(n)x^n\right)\in \R[[x]]$.
Then we have
\begin{equation}\label{eq:reg_cmzv}
\Psi_\ast (T) = \Lambda(\x_{1,1}) \tilde{\bf q} \big( \pi_1 (\Phi_\shuffle(T))\big).
\end{equation}
\end{lemma}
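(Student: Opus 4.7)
For part (i), the strategy is to verify that each right-hand side satisfies the defining properties of the corresponding regularization and then invoke uniqueness. First observe that $\x_1$ is primitive for $\Delta_\shuffle$ by Proposition \ref{prop:shuffle_coproduct}(i), and that $\x_{1,1}$ is primitive for the stuffle coproduct $\Delta_\ast$ dual to $\ast$ on $\mathfrak{A}^{1,\vee}$, because the general formula
\begin{equation*}
\Delta_\ast(\x_{k,\eta})=\x_{k,\eta}\otimes 1+1\otimes\x_{k,\eta}+\sum_{\substack{l+l'=k,\,l,l'\geq 1\\\xi\zeta=\eta}}\x_{l,\xi}\otimes\x_{l',\zeta}
\end{equation*}
has empty stuffle term when $k=1$. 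Since $\hat{\mathcal{I}}(\cdot;0)$ and $\hat{\mathcal{L}}(\cdot;0)$ are algebra homomorphisms, $\Phi_\shuffle(0)$ and $\Psi_\ast(0)$ are group-like for $\Delta_\shuffle$ and $\Delta_\ast$ respectively (Proposition \ref{prop:shuffle_coproduct}(ii) and its stuffle analogue). Multiplying by the group-like exponentials $e^{T\x_1}$ and $e^{T\x_{1,1}}$ preserves group-likeness, so the coefficient functionals $w\mapsto\langle e^{T\x_1}\Phi_\shuffle(0),w\rangle$ and $w\mapsto\langle e^{T\x_{1,1}}\Psi_\ast(0),w\rangle$ are algebra homomorphisms into $\C[T]$. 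A short check shows each functional agrees with the unregularized map on $\mathfrak{A}^0$ and assigns $T$ to the distinguished generator $e_1$ (resp.~$e_{1,1}$), since only the constant and linear terms of the exponential contribute when pairing with words of length $\leq 1$; in the shuffle case one also checks $e_0\mapsto 0$. Uniqueness of $\hat{\mathcal{I}}(\cdot;T)$ and $\hat{\mathcal{L}}(\cdot;T)$ then forces equality with $\Phi_\shuffle(T)$ and $\Psi_\ast(T)$.

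For part (ii), I would first reduce to the $T=0$ case. Using part (i) we have $\pi_1(\Phi_\shuffle(T))=\pi_1(e^{T\x_1}\Phi_\shuffle(0))$, and two elementary commutation identities apply: $\pi_1(\x_1^n S)=\x_{1,1}^n\pi_1(S)$ for every $S\in\mathfrak{A}^\vee$ (each leading $\x_1=\x_0^0\x_1$ is sent to $\x_{1,1}$, and prepending leading $\x_1$'s does not alter how the remaining $\x_0$-runs are grouped), and $\tilde{\bf q}(\x_{1,1}^n X)=\x_{1,1}^n\tilde{\bf q}(X)$ on $\mathfrak{A}^{1,\vee}$ (using $\overline{1}\cdot\eta=\eta$, so leading $\x_{1,1}$'s and the first subsequent letter are fixed by $\tilde{\bf q}$). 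Since $\Lambda(\x_{1,1})$ commutes with $e^{T\x_{1,1}}$, these give
\begin{equation*}
\Lambda(\x_{1,1})\tilde{\bf q}(\pi_1(\Phi_\shuffle(T)))=e^{T\x_{1,1}}\Lambda(\x_{1,1})\tilde{\bf q}(\pi_1(\Phi_\shuffle(0))),
\end{equation*}
so combined with $\Psi_\ast(T)=e^{T\x_{1,1}}\Psi_\ast(0)$ the identity \eqref{eq:reg_cmzv} reduces to its specialization at $T=0$.

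The $T=0$ case is then verified coefficient-wise. For every admissible word $w=e_{k_1,\xi_1}\cdots e_{k_r,\xi_r}\in\mathfrak{A}^0$ the coefficient of $w^\vee$ on the left is $\mathcal{L}(w)$; on the right only $\lambda_0=1$ of $\Lambda$ contributes (since $w$ does not begin with $e_{1,1}$), and unwinding $\tilde{\bf q}\circ\pi_1$ via the substitution $\eta_i=\xi_1\cdots\xi_i$ produces exactly $\hat{\mathcal{I}}({\bf p}(w);0)=\mathcal{I}({\bf p}(w))=\mathcal{L}(w)$ by \eqref{eq:LI}. To extend to non-admissible words, I would invoke the isomorphism $\mathfrak{A}^1_\ast\cong\mathfrak{A}^0_\ast[e_{1,1}]$: both coefficient functionals are algebra homomorphisms $\mathfrak{A}^1_\ast\to\C$, so they are determined by their values on $\mathfrak{A}^0$ and on $e_{1,1}$. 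Agreement at $e_{1,1}$ is immediate because $\hat{\mathcal{L}}(e_{1,1};0)=0$, while on the other side $\lambda_1=0$ and $\hat{\mathcal{I}}(e_1;0)=0$, so the pairing vanishes as well.

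The main obstacle is verifying that $\Lambda(\x_{1,1})\tilde{\bf q}(\pi_1(\Phi_\shuffle(0)))$ is group-like for the stuffle coproduct $\Delta_\ast$, which is what allows one to treat its coefficient functional as an algebra homomorphism in the last step. This is the genuine content of the regularization theorem: the prescription $\Lambda(x)=\exp(\sum_{n\geq 2}\frac{(-1)^{n-1}}{n}\zeta(n)x^n)=e^{-\gamma x}/\Gamma(1+x)$ is precisely what absorbs the asymptotic discrepancy between the harmonic regularization $L_m(e_{1,1})=\log m+\gamma+O(1/m)$ and the shuffle regularization $-\log(1-z)\sim\log m$ at $z=1-1/m$, while propagating correctly through the nonlinear interaction of $\ast$ and $\shuffle$ in depth $>1$. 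For this step I would appeal to Racinet's thesis \cite{R02}; with group-likeness in hand, the formal steps above complete the proof.
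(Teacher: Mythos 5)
Your proof is correct, and for part (ii) it follows the paper's argument step by step. The genuine difference is part (i): the paper discharges it with a citation to Racinet's Corollaries 2.4.4--2.4.5, whereas you give a complete self-contained derivation via primitivity of $\x_1$ for $\Delta_\shuffle$ and of $\x_{1,1}$ for the dual stuffle coproduct $\Delta_\ast$, group-likeness of $e^{T\x_1}\Phi_\shuffle(0)$ and $e^{T\x_{1,1}}\Psi_\ast(0)$, the low-length checks $e_0\mapsto 0$, $e_1\mapsto T$ (resp.\ $e_{1,1}\mapsto T$), agreement with $\mathcal{I}$ (resp.\ $\mathcal{L}$) on $\mathfrak{A}^0$, and uniqueness of the regularized extensions coming from $\mathfrak{A}_\shuffle\cong\mathfrak{A}^0_\shuffle[e_0,e_1]$ and $\mathfrak{A}^1_\ast\cong\mathfrak{A}^0_\ast[e_{1,1}]$; this fills in a genuine gap in exposition. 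For part (ii), your commutation identities $\pi_1(\x_1^nS)=\x_{1,1}^n\pi_1(S)$ and $\tilde{\bf q}(\x_{1,1}^nX)=\x_{1,1}^n\tilde{\bf q}(X)$ are exactly what the paper uses (the paper states the second explicitly and leaves the first implicit inside $\tilde{\bf q}\circ\pi_1$), and the reduction to $T=0$ is identical. One organizational remark: Racinet's Corollary 2.4.15 already yields the full equality $\Psi_\ast(0)=\Lambda(\x_{1,1})\tilde{\bf q}(\pi_1(\Phi_\shuffle(0)))$, which is what the paper cites; your reconstruction from the weaker-sounding ingredient (group-likeness of the right-hand side for $\Delta_\ast$) plus the coefficient match on $\mathfrak{A}^0$ and $e_{1,1}$ is logically fine but slightly circular in practice, since the natural way to obtain that group-likeness from Racinet is via the equality itself. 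This does not create a gap; it is merely a stylistic detour.
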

\begin{proof}
The statement i) follows from \cite[Corollaries 2.4.4 and 2.4.5]{R02} (see also \cite[Lemma 4.4]{SZ15}).\\
Now prove ii).
If $w$ is admissible (i.e. $w\in \mathfrak{A}^0$), the equality of the coefficient of $w$ in \eqref{eq:reg_cmzv} is equivalent to \eqref{eq:LI}.
In fact, one has
\[ \langle \Psi_\ast(T) ,w \rangle =\mathcal{L}(w) = \mathcal{I}({\bf p}(w))=\left\langle \tilde{\bf q}\big(\Phi_\shuffle(T)\big) ,w \right\rangle= \left\langle \Lambda(\x_{1,1}) \tilde{\bf q}\big(\pi_1(\Phi_\shuffle(T))\big) ,w \right\rangle.\]
A crucial difference between the shuffle and harmonic regularizations shows up if $w\in \mathfrak{A}^1 \backslash \mathfrak{A}^0$.
This is described as the regularization relation \cite[Corollary 2.4.15]{R02}.
In our setting, it is
\[ \Psi_\ast(0)= \Lambda(\x_{1,1})\tilde{\bf q}\big(\pi(\Phi_\shuffle(0))\big). \]
Since $\tilde{\bf q}(\x_{1,1}^nw)=\x_{1,1}^n\tilde{\bf q}(w)$, from i) one has
\begin{align*}
\Psi_\ast(T)&= e^{T\x_{1,1}}\Psi_\ast(0) =   e^{T\x_{1,1}}\Lambda(\x_{1,1})\tilde{\bf q}\big(\pi_1(e^{-T\x_1} \Phi_\shuffle(T))\big)\\
&=e^{T\x_{1,1}}\Lambda(\x_{1,1})e^{-T\x_{1,1}}  \tilde{\bf q}\big(\pi_1 (\Phi_\shuffle(T))\big)=\Lambda(\x_{1,1})\tilde{\bf q}\big(\pi_1(\Phi_\shuffle(T))\big),
\end{align*}
from which the statement ii) follows (see also \cite[Theorem 2.2]{AK04}).
\end{proof}

\subsection{Non-commutative generating series}

We now compute the non-commutative generating series of symmetric colored multiple zeta values.

Let us define
\[ \Phi_\ast(T) = \Lambda(\x_1)\Phi_\shuffle(T) \in \mathfrak{A}^\vee \]
and for $\alpha\in \Z/N\Z$ set
\[ \Xi_\alpha (T_1,T_2) =\sum_{\eta\in \mu_N} \overline{\eta}^\alpha \sigma( \Phi_\ast^\eta(T_1)) \x_\eta \Phi_\ast^\eta(T_2) ,\]
which lies in $ \C[T_1,T_2]\langle\langle \x_0,\x_{\eta}\mid \eta\in\mu_N\rangle\rangle$.
Here for $S\in \mathfrak{A}^\vee$ we write
\[ S^\eta = \sum_{w}S_{{\bf t}_\eta (w)} w^\vee \]
with ${\bf t}_\eta : \mathfrak{A}\rightarrow \mathfrak{A}$ being an algebra homomorphism with respect to the concatenation such that $ {\bf t}_\eta (e_{a}) =e_{a\overline{\eta}}$.
A similar generating series to $\Xi_{1} (0,0)$ was  introduced by Jarossay \cite[Appendix A]{J18} in a connection with $p$-adic symmetric colored multiple zeta values.

We remark that Lemma \ref{lem:reg_theorem} ii) shows the identities
\[ \tilde{\bf q}\big(\pi_1(\Phi_\ast(T))\big) = \Lambda(\x_{1,1})\tilde{\bf q}\big( \pi_1(\Phi_\shuffle(T))\big) = \Psi_\ast(T).\]
Hence, for $w\in \mathfrak{A}^1$ we have
\begin{equation}\label{eq:cof_Phi_ast} 
\langle \Phi_\ast(T), w\rangle = \langle \tilde{\bf p}\big(\Psi_\ast(T)\big), w\rangle = \hat{\mathcal{L}}({\bf q}(w);T).
\end{equation}

\begin{lemma}\label{lem:xi_alpha_generating_series}
For integers $k_1,\ldots,k_r\ge1$ and $\eta_1,\ldots,\eta_r\in \mu_N$, we have 
\begin{align*}
& \left\langle \Xi_\alpha (T_1,T_2), {\bf p} \big(e_1 e_0^{k_1-1}e_{\eta_1} e_0^{k_2-1}e_{\eta_2} \cdots e_0^{k_r-1}e_{\eta_r}\big) \right\rangle \\
&=\sum_{j=0}^r (-1)^{k_1+\cdots+k_j} (\eta_1\cdots \eta_j)^\alpha \setlength\arraycolsep{1pt} L_\ast \left(\begin{array}{c}\overline{\eta}_j,\ldots,\overline{\eta}_1\\k_j,\ldots,k_1\end{array};T_1\right) \setlength\arraycolsep{1pt} L_\ast \left(\begin{array}{c}\eta_{j+1},\ldots,\eta_r\\k_{j+1},\ldots,k_r\end{array};T_2\right).
\end{align*}
\end{lemma}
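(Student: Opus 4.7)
The plan is to compute $\langle \Xi_\alpha(T_1,T_2), {\bf p}(e_1 e_0^{k_1-1}e_{\eta_1}\cdots e_0^{k_r-1}e_{\eta_r})\rangle$ directly from the definition of $\Xi_\alpha$, by observing that the coefficient of a word in $\sigma(S_1)\x_\eta S_2$ decomposes as a sum over the positions of the letter $\x_\eta$ within that word. Setting $\xi_j=\eta_1\cdots\eta_j$ with $\xi_0=1$, the definition of ${\bf p}$ gives
\[
{\bf p}(e_1 e_0^{k_1-1}e_{\eta_1}\cdots e_0^{k_r-1}e_{\eta_r}) = e_{\xi_0} e_0^{k_1-1} e_{\xi_1} e_0^{k_2-1} e_{\xi_2}\cdots e_0^{k_r-1} e_{\xi_r},
\]
so the $\mu_N$-indexed letters of the target word appear at $r+1$ positions with values $e_{\xi_j}$ ($j=0,\ldots,r$). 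The pairing therefore collapses to a sum of $r+1$ terms indexed by $j\in\{0,\ldots,r\}$, each of shape $\overline{\xi_j}^\alpha\,\langle \sigma(\Phi_\ast^{\xi_j}(T_1)), L_j\rangle\,\langle \Phi_\ast^{\xi_j}(T_2), R_j\rangle$, where $L_j$ is the prefix of the target word of total length $k_1+\cdots+k_j$ and $R_j$ is the complementary suffix.

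Next, I would use the antipode formula $\sigma(\x_{a_1}\cdots\x_{a_n})=(-1)^n\x_{a_n}\cdots\x_{a_1}$ to rewrite the left factor as $(-1)^{k_1+\cdots+k_j}\langle \Phi_\ast^{\xi_j}(T_1), L_j^r\rangle$, and then push the shift through via the definitional identity $\langle S^\eta,w\rangle = \langle S,{\bf t}_\eta(w)\rangle$. The combinatorial heart of the argument is the verification of
\[
{\bf t}_{\xi_j}(L_j^r) = {\bf p}\bigl(e_0^{k_j-1}e_{\overline{\eta}_j}\, e_0^{k_{j-1}-1}e_{\overline{\eta}_{j-1}}\cdots e_0^{k_1-1}e_{\overline{\eta}_1}\bigr),
\]
\[
{\bf t}_{\xi_j}(R_j) = {\bf p}\bigl(e_0^{k_{j+1}-1}e_{\eta_{j+1}}\, e_0^{k_{j+2}-1}e_{\eta_{j+2}}\cdots e_0^{k_r-1}e_{\eta_r}\bigr),
\]
both of which follow by telescoping the quotients $\xi_i\overline{\xi_j} = \overline{\eta_{i+1}\cdots\eta_j}$ for $i<j$ and $\xi_i\overline{\xi_j} = \eta_{j+1}\cdots\eta_i$ for $i>j$, matched against the definition of ${\bf p}$.

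Finally, I would invoke \eqref{eq:cof_Phi_ast} together with ${\bf q}\circ{\bf p}={\rm id}$ to rewrite each pairing as a regularized colored multiple zeta polynomial: $\langle \Phi_\ast(T_1),{\bf t}_{\xi_j}(L_j^r)\rangle = \hat{\mathcal{L}}(e_{k_j,\overline{\eta}_j}\cdots e_{k_1,\overline{\eta}_1};T_1)$ and $\langle \Phi_\ast(T_2),{\bf t}_{\xi_j}(R_j)\rangle = \hat{\mathcal{L}}(e_{k_{j+1},\eta_{j+1}}\cdots e_{k_r,\eta_r};T_2)$, i.e., the desired $L_\ast$-factors. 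Summing the $r+1$ contributions produces the claimed formula.

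The main obstacle will be the verification of the combinatorial identity for ${\bf t}_{\xi_j}(L_j^r)$, where the interactions of reversal, of conjugation by $\overline{\xi_j}$, and of the isomorphism ${\bf p}$ must all be aligned; once the indices are tracked systematically this is a direct check, but an off-by-one shift or a misplaced conjugation step is the obvious pitfall.
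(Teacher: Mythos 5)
Your proof is correct and follows essentially the same route as the paper's: decompose the pairing by the position of the $\x_\eta$ letter, flip the prefix via the antipode, absorb the twist ${\bf t}_\eta$ using $\langle S^\eta,w\rangle=\langle S,{\bf t}_\eta(w)\rangle$, and cash out with \eqref{eq:cof_Phi_ast}. The only difference is organizational — you parametrize the word directly by the cumulative products $\xi_j=\eta_1\cdots\eta_j$ that ${\bf p}$ produces (which makes the invocation of ${\bf q}\circ{\bf p}={\rm id}$ particularly clean), whereas the paper keeps generic letters $\eta_0,\ldots,\eta_r$ in the word and only performs the substitution $\eta_j\mapsto\eta_1\cdots\eta_j$ at the last line; the two bookkeeping choices are equivalent and your telescoping check of ${\bf t}_{\xi_j}(L_j^r)={\bf p}(\cdots)$ matches the paper's intermediate computation of ${\bf q}\circ{\bf t}_{\eta_j}$.

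One point you should notice rather than paper over: the coefficient you correctly obtain from the definition of $\Xi_\alpha$ is $\overline{\xi_j}^{\,\alpha}=(\eta_1\cdots\eta_j)^{-\alpha}$, while the lemma (and Definition~\ref{def:SCMZV}, which it must reproduce, and Theorem~\ref{thm:asym_zm}, where taking $m\to mN+\alpha$ forces $(\eta_1\cdots\eta_j)^m\to(\eta_1\cdots\eta_j)^{\alpha}$) has $(\eta_1\cdots\eta_j)^{+\alpha}$. The paper's own proof has the same mismatch — after its substitution $\eta_j\mapsto\eta_1\cdots\eta_j$ the factor $\overline{\eta}_j^\alpha$ becomes $\overline{(\eta_1\cdots\eta_j)}^\alpha$, not $(\eta_1\cdots\eta_j)^\alpha$. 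This is an internal inconsistency in the source, almost certainly a typo in the definition of $\Xi_\alpha$ (the prefactor $\overline{\eta}^\alpha$ should read $\eta^\alpha$). Your argument is fine, but the closing sentence ``summing the $r+1$ contributions produces the claimed formula'' is literally false as stated, and you should flag the sign rather than assert agreement.
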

\begin{proof}
Let $w= e_{\eta_0}e_0^{k_1-1}e_{\eta_1} \cdots e_0^{k_r-1}e_{\eta_r}$.
Since 
\[\sum_{\eta\in \mu_N} \langle S\x_{\eta} S', w\rangle = \sum_{j=0}^r \langle S, e_{\eta_0}e_0^{k_1-1}e_{\eta_1} \cdots e_{\eta_{j-1}}e_0^{k_j-1}\rangle\langle S', e_0^{k_{j+1}-1}e_{\eta_{j+1}}\cdots e_0^{k_r-1}e_{\eta_r}\rangle\] 
holds for $S,S'\in \mathfrak{A}^\vee$, by \eqref{eq:cof_Phi_ast} one has
\begin{align*}
&\left\langle \Xi_\alpha (T_1,T_2), w\right\rangle \\
&=\sum_{j=0}^r  (-1)^{k_1+\cdots+k_j}\overline{\eta}_j^\alpha \hat{\mathcal{L}}({\bf q}\circ {\bf t}_{\eta_j}(e_0^{k_j-1}e_{\eta_{j-1}} \cdots e_0^{k_1-1}e_{\eta_0});T_1) \\
&\times\hat{\mathcal{L}} ({\bf q}\circ {\bf t}_{\eta_j} (e_0^{k_{j+1}-1}e_{\eta_{j+1}}\cdots e_0^{k_r-1}e_{\eta_r});T_2)\\
&= \sum_{j=0}^r  (-1)^{k_1+\cdots+k_j}\overline{\eta}_j^\alpha \hat{\mathcal{L}}({\bf q}(e_0^{k_j-1}e_{\eta_{j-1}\overline{\eta}_j} \cdots e_0^{k_1-1}e_{\eta_0\overline{\eta}_j});T_1)\\
&\times \hat{\mathcal{L}}( {\bf q}(e_0^{k_{j+1}-1}e_{\eta_{j+1}\overline{\eta}_j}\cdots e_0^{k_r-1}e_{\eta_r\overline{\eta}_j});T_2)\\
&= \sum_{j=0}^r  (-1)^{k_1+\cdots+k_j}\overline{\eta}_j^\alpha \hat{\mathcal{L}}(e_0^{k_j-1}e_{\eta_{j-1}\overline{\eta}_{j}} e_0^{k_{j-1}-1}e_{\eta_{j-2}\overline{\eta}_{j-1}}  \cdots e_0^{k_1-1}e_{\eta_0\overline{\eta}_1};T_1)\\
&\times \hat{\mathcal{L}}( e_0^{k_{j+1}-1}e_{\eta_{j+1}\overline{\eta}_{j}}e_0^{k_{j+2}-1}e_{\eta_{j+2}\overline{\eta}_{j+1}}\cdots e_0^{k_r-1}e_{\eta_{r}\overline{\eta}_{r-1}};T_2)\\
&= \sum_{j=0}^r (-1)^{k_1+\cdots+k_j}  \overline{\eta}_j^\alpha \setlength\arraycolsep{1pt} L_\ast \left(\begin{array}{c}\frac{\overline{\eta}_{j}}{\overline{\eta}_{j-1}} ,\ldots,\frac{\overline{\eta}_1}{\overline{\eta}_0} \\k_j,\ldots,k_1\end{array};T_1\right) \setlength\arraycolsep{1pt} L_\ast \left(\begin{array}{c}\frac{\eta_{j+1}}{\eta_{j}},\ldots,\frac{\eta_{r}}{\eta_{r-1}} \\k_{j+1},\ldots,k_r\end{array};T_2\right).
\end{align*}
Letting $\eta_0=1$ and replacing $(\eta_1,\eta_2,\ldots,\eta_r)$ with $\left(\eta_1,\eta_1\eta_2,\ldots,\eta_1\cdots \eta_r\right)$, we get the desired result.
\end{proof}

\begin{theorem}\label{thm:sym_MZV}
For any $\alpha\in \Z/N\Z$ and index $\binom{\veta}{\kk}\in \mu_N^r\times \Z_{>0}^r$, we have $L_{\alpha}^{\mathcal{S}}\tbinom{\veta}{\kk}\in \C$.
\end{theorem}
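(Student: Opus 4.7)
The plan is to invoke Lemma \ref{lem:xi_alpha_generating_series} to realize $L_\alpha^{\mathcal{S}}\binom{\veta}{\kk}$ as a specific coefficient of the non-commutative generating series $\Xi_\alpha(T_1,T_2)$, and then establish independence from $T$ by differentiating once the $T$-dependence has been factored out as explicit exponentials. Concretely, substituting $(T_1,T_2)=(T+\tfrac{\pi i}{2},T-\tfrac{\pi i}{2})$ in Lemma \ref{lem:xi_alpha_generating_series} gives
\[
L_\alpha^{\mathcal{S}}\binom{\veta}{\kk} = \bigl\langle \Xi_\alpha(T+\tfrac{\pi i}{2}, T-\tfrac{\pi i}{2}), W\bigr\rangle, \qquad W := {\bf p}(e_1 e_0^{k_1-1}e_{\eta_1}\cdots e_0^{k_r-1}e_{\eta_r}),
\]
so the task is to show the right-hand side is a constant polynomial in $T$.

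Next I would use Lemma \ref{lem:reg_theorem}(i) to write $\Phi_\shuffle(T) = e^{T\x_1}\Phi_\shuffle(0)$; since $\Lambda(\x_1)$ is a power series in $\x_1$ it commutes with $\x_1$, giving $\Phi_\ast(T) = e^{T\x_1}\Phi_\ast(0)$. Because $S\mapsto S^\eta$ is a concatenation algebra homomorphism sending $\x_a \mapsto \x_{a\overline{\eta}}$, one obtains $\Phi_\ast^\eta(T) = e^{T\x_{\overline{\eta}}}\Phi_\ast^\eta(0)$, and the antipode property $\sigma(\x_a)=-\x_a$ then yields $\sigma(\Phi_\ast^\eta(T)) = \sigma(\Phi_\ast^\eta(0))e^{-T\x_{\overline{\eta}}}$. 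Substituting these expressions into the definition of $\Xi_\alpha$ and using that $\x_{\overline{\eta}}$ commutes with $e^{\pm T\x_{\overline{\eta}}}$, I compute
\[
\frac{d}{dT}\Xi_\alpha\bigl(T+\tfrac{\pi i}{2}, T-\tfrac{\pi i}{2}\bigr) = \sum_{\eta\in\mu_N}\overline{\eta}^\alpha\, \sigma(\Phi_\ast^\eta(T+\tfrac{\pi i}{2}))\,[\x_\eta, \x_{\overline{\eta}}]\,\Phi_\ast^\eta(T-\tfrac{\pi i}{2}).
\]
The terms with $\eta^2 = 1$ (i.e.\ $\eta=\pm 1\in\mu_N$) vanish automatically because then $\overline{\eta}=\eta$ and $[\x_\eta,\x_{\overline{\eta}}]=0$.

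The main obstacle will be to show that the residual sum over $\eta\in\mu_N$ with $\eta^2\neq 1$, paired against $W$, vanishes. The pairing $\bigl\langle \sigma(\Phi_\ast^\eta(\cdot))[\x_\eta,\x_{\overline{\eta}}]\Phi_\ast^\eta(\cdot), W\bigr\rangle$ is supported only at positions in $W$ where two consecutive non-$e_0$ letters read $(e_\eta, e_{\overline{\eta}})$ or $(e_{\overline{\eta}}, e_\eta)$; this forces $k_i=1$ at those places together with sharp constraints on the partial products $\mu_j := \eta_1\cdots\eta_j$. My strategy is to pair $\eta$ with $\overline{\eta}$ (whose commutators differ by a sign), translate the relevant coefficients of $\Phi_\ast^\eta(0)$ and $\sigma(\Phi_\ast^\eta(0))$ into harmonic-regularized colored multiple zeta values via \eqref{eq:cof_Phi_ast}, and then use the regularization relation \eqref{eq:reg_cmzv} to convert these into shuffle-regularized values so that the desired cancellation can be extracted from the group-likeness of $\Phi_\shuffle(T_1)$ and $\Phi_\shuffle(T_2)$. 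Executing this reduction to a clean shuffle identity is where I expect the bulk of the technical work to lie.
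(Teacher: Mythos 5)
Your plan runs aground on a sign error in the dualization of $\mathbf{t}_\eta$. The paper defines $S^\eta$ by $S^\eta_w = S_{\mathbf{t}_\eta(w)}$ with $\mathbf{t}_\eta(e_a) = e_{a\overline{\eta}}$. Transposing this, the coefficient of $e_a$ in $(\x_b)^\eta$ equals the coefficient of $\mathbf{t}_\eta(e_a) = e_{a\overline{\eta}}$ in $\x_b$, which is nonzero precisely when $a\overline{\eta} = b$, i.e.\ $a = b\eta$. Hence $(\x_b)^\eta = \x_{b\eta}$, \emph{not} $\x_{b\overline{\eta}}$. In particular $(e^{T\x_1})^\eta = e^{T\x_\eta}$, so the correct formula is
\[
\Phi_\ast^\eta(T) = e^{T\x_\eta}\,\Phi_\ast^\eta(0), \qquad \sigma\bigl(\Phi_\ast^\eta(T)\bigr) = \sigma\bigl(\Phi_\ast^\eta(0)\bigr)\, e^{-T\x_\eta}.
\]
This is consistent with the paper's own computation $\Lambda(\x_1)^\eta = \Lambda(\x_\eta)$.

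With the correct transpose, the exponent variable coincides with the middle factor $\x_\eta$ in $\Xi_\alpha$, so all three factors $e^{-T_1\x_\eta}$, $\x_\eta$, $e^{T_2\x_\eta}$ commute, giving
\[
\Xi_\alpha(T_1,T_2) = \sum_{\eta\in\mu_N}\overline{\eta}^\alpha\,\sigma\bigl(\Phi_\ast^\eta(0)\bigr)\,\x_\eta\, e^{(T_2-T_1)\x_\eta}\,\Phi_\ast^\eta(0),
\]
which depends only on $T_2 - T_1$. Specializing $(T_1,T_2) = (T+\tfrac{\pi i}{2}, T-\tfrac{\pi i}{2})$ makes this difference the constant $-\pi i$, so $\Xi_\alpha(T+\tfrac{\pi i}{2},T-\tfrac{\pi i}{2})$ is simply independent of $T$ and the derivative is identically zero. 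There is no commutator $[\x_\eta,\x_{\overline{\eta}}]$, no distinction between $\eta^2 = 1$ and $\eta^2 \neq 1$, and no residual cancellation to establish. The entire technical program you sketch for the ``residual sum'' is an artifact of the wrong exponent, and you would waste considerable effort chasing it. Once the transpose is corrected, your differentiation strategy collapses to the same commutation observation the paper uses (the paper phrases it by factoring $T_1 - T_2$ out rather than differentiating, but the mechanism is the same).
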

\begin{proof}
Note that the map ${\bf t}_\eta$ is an automorphism for any $\eta\in \mu_N$ and its inverse is ${\bf t}_{\overline{\eta}}$.
By definition, $\langle S^\eta ,w\rangle = \langle S ,{\bf t}_\eta(w) \rangle$ holds for all $S\in \mathfrak{A}^\vee$ and $w\in \mathfrak{A}$, and $\langle S_1S_2,w\rangle=\sum_{w=w_1w_2} \langle S_1,w_1\rangle \langle S_2,{w_2}\rangle$ holds for $S_1,S_2\in \mathfrak{A}^\vee$ and $w\in \mathfrak{A}$. 
With this, for $\eta\in \mu_N$, one has
\begin{align*}
\langle (S_1S_2)^\eta,w\rangle &=\langle S_1S_2, {\bf t}_{{\eta}} (w)\rangle  =\sum_{{\bf t}_{{\eta}} (w)=w_1w_2} \langle S_1,w_1\rangle \langle S_2,{w_2}\rangle\\
&=\sum_{w={\bf t}_{\overline{\eta}} (w_1){\bf t}_{\overline{\eta}}(w_2)}  \langle S_1,w_1\rangle \langle S_2,w_2\rangle =\sum_{w=v_1v_2 }  \langle S_1,{\bf t}_{{\eta}} (v_1) \rangle \langle S_2,{\bf t}_{{\eta}} (v_2) \rangle \\
&=\sum_{w=v_1v_2 }  \langle S_1^\eta,v_1 \rangle \langle S_2^\eta,v_2 \rangle= \langle S_1^\eta S_2^\eta,w\rangle,
\end{align*}
so $ (S_1S_2)^\eta=S_1^\eta S_2^\eta $.
This shows
\[\Phi_\ast^{\eta}(T) = \Lambda(\x_1)^\eta \Phi_\shuffle^\eta(T) =\Lambda(\x_\eta ) \Phi_\shuffle^\eta(T).\]
Using Lemma \ref{lem:reg_theorem} i), one computes
\begin{align*}
\Xi_\alpha  (T_1,T_2) &= \sum_{\eta\in \mu_N} \overline{\eta}^\alpha \sigma\big(  \Lambda(\x_\eta)\Phi_\shuffle^\eta(T_1) \big) \x_\eta  \Lambda (\x_\eta)\Phi_\shuffle^\eta(T_2)\\
&=\sum_{\eta\in \mu_N} \overline{\eta}^\alpha \sigma\big(  \Phi_\shuffle^\eta(T_1) \big)\Lambda (-\x_\eta)  \x_\eta  \Lambda (\x_\eta)\Phi_\shuffle^\eta(T_2)\\
&=\sum_{\eta\in \mu_N} \overline{\eta}^\alpha \sigma\big( \Phi_\shuffle^\eta (0) \big) e^{T_1\x_\eta} \Lambda(-\x_\eta)  \x_\eta  \Lambda (\x_\eta)e^{-T_2\x_\eta}\Phi_\shuffle^\eta(0)\\
&=\sum_{\eta\in \mu_N} \overline{\eta}^\alpha \sigma\big( \Phi_\shuffle^\eta(0) \big) \frac{\sin(\pi \x_\eta)}{\pi} e^{(T_1-T_2)\x_\eta}\Phi_\shuffle^\eta(0)\\
&=\frac{1}{2\pi i} \sum_{\eta\in \mu_N} \overline{\eta}^\alpha \sigma\big( \Phi_\shuffle^\eta(0) \big) \big(  e^{(\pi i +T_1-T_2)\x_\eta}- e^{(-\pi i +T_1-T_2)\x_\eta}\big) \Phi_\shuffle^\eta(0).
\end{align*}
Letting
\begin{equation*}\label{eq:L_exp} 
\Phi_{exp}(T) = \sigma\big( \Phi_\shuffle(0) \big) e^{T\x_1} \Phi_\shuffle(0)  \in \mathfrak{A}^\vee,\end{equation*}
we have
\begin{equation*} 
\Xi_\alpha  (T_1,T_2)=\frac{1}{2\pi i} \sum_{\eta\in \mu_N} \overline{\eta}^\alpha \big( \Phi_{exp}^\eta (\pi i+T_1-T_2) - \Phi_{exp}^\eta (-\pi i+T_1-T_2)  \big) ,
 \end{equation*}
and so
\begin{equation}\label{eq:xi_int}
\begin{aligned}
\Xi_\alpha  \left( T+\frac{\pi i}{2},T-\frac{\pi i}{2}\right)&=\frac{1}{2\pi i} \sum_{\eta\in \mu_N} \overline{\eta}^\alpha \big( \Phi_{exp}^\eta (2\pi i) - \Phi_{exp}^\eta (0)  \big)\\
&=\frac{1}{2\pi i} \sum_{\eta\in \mu_N} \overline{\eta}^\alpha \big( \Phi_{exp}^\eta (2\pi i) - 1  \big),
\end{aligned}
\end{equation}
where for the last equality we have used $\Phi_{exp}^\eta (0) =\sigma\big( \Phi_\shuffle^\eta(0) \big)\Phi_\shuffle^\eta(0)=1$ (recall Proposition \ref{prop:shuffle_coproduct}).
Since the last term of \eqref{eq:xi_int} does not depend on $T$, the desired result follows from Lemma \ref{lem:xi_alpha_generating_series}.
\end{proof}

\begin{remark}
By definition, the coefficients in $\Phi_{exp}(2\pi i) $ can be written in terms of iterated integrals.
For $a_1,\ldots,a_k\in \{0\}\cup \mu_N$, define
\begin{equation*}\label{eq:int_exp} 
 I_\beta(0';a_1,\ldots,a_k;0')
\end{equation*}
as an iterated integral of $\wedge_{i=1}^k \omega_{a_i}(t_i)$ along the path $\beta$, which is compositions $\beta={\rm dch}\circ \alpha \circ {\rm dch}^{-1}$ of the straight line path ${\rm dch}$ from the tangential basepoints $0'$ to $1'$ and the path $\alpha$ from $1'$ to $1'$ which counterclockwise circle around 1 one times (see also Hirose \cite{H18}).
Using the above integral, we obtain
\[ \langle \Phi_{exp}(2\pi i) ,e_{a_1}\cdots e_{a_k}\rangle =I_\beta(0';a_1,\ldots,a_k;0').\]
From Lemma \ref{lem:xi_alpha_generating_series} and the equation \eqref{eq:xi_int}, the following formula can be proved in much the same as \cite[Corollary 10]{H18}: 
\begin{align*}
 &L^{\mathcal{S}}_\alpha  \binom{\frac{\eta_1}{\eta_0},\ldots,\frac{\eta_{r}}{\eta_{r-1}}}{k_{1},\ldots,k_r}= \frac{1}{2\pi i}\sum_{\eta\in \mu_N} \overline{\eta}^\alpha I_\beta(0';\eta_0\overline{\eta},\{0\}^{k_1-1},\eta_1\overline{\eta},\ldots,\{0\}^{k_r-1},\eta_r\overline{\eta};0'). 
 \end{align*}

\end{remark}

\subsection{Connection with multiple harmonic $q$-sums at roots of unity}

As a result, our symmetric colored multiple zeta values are obtained from an analytic limit of multiple harmonic $q$-sums at primitive roots of unity.

\begin{theorem}\label{thm:connection_smzv}
Let $\alpha \in \Z/N\Z$.
For any index $\binom{\veta}{\kk}\in \mu_N^r\times \Z_{>0}^r$ we have
\[L^{\mathcal{S}}_\alpha \binom{\veta}{\kk}= \lim_{m\rightarrow \infty} \setlength\arraycolsep{1pt}z_{mN+\alpha}  \left(\begin{array}{c}\veta\\\kk\end{array};e^{\frac{2\pi i}{mN+\alpha}} \right) .\]
\end{theorem}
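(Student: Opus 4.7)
The strategy is to combine the asymptotic formula (Theorem \ref{thm:asym_zm}) with the $T$-independence of the symmetric colored multiple zeta value (Theorem \ref{thm:sym_MZV}). All the real work has already been done in those two results; what remains is essentially bookkeeping, so the proof should be very short.

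First I would specialize Theorem \ref{thm:asym_zm} by replacing $m$ with $mN+\alpha$. Since each $\eta_i\in\mu_N$ satisfies $\eta_i^N=1$, the key observation is the periodicity
\[(\eta_1\cdots\eta_j)^{mN+\alpha}=(\eta_1\cdots\eta_j)^\alpha,\]
so that the factor $(\eta_1\cdots\eta_j)^{mN+\alpha}$ in the asymptotic expansion collapses to the same $\alpha$-dependent factor appearing in Definition \ref{def:SCMZV}. With the choice
\[T_m \;=\; \log\frac{mN+\alpha}{\pi}+\gamma,\]
the arguments $T_m\pm\frac{\pi i}{2}$ of the two $L_\ast$-polynomials in the asymptotic formula match exactly the arguments $T\pm\frac{\pi i}{2}$ in the definition of $L^{\mathcal{S}}_\alpha\bigl(\tfrac{\veta}{\kk};T\bigr)$.

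Consequently Theorem \ref{thm:asym_zm} yields
\[
z_{mN+\alpha}\!\left(\begin{array}{c}\veta\\\kk\end{array};e^{\frac{2\pi i}{mN+\alpha}}\right)
= L^{\mathcal{S}}_{\alpha}\!\left(\begin{array}{c}\veta\\\kk\end{array};T_m\right)
+ O\!\left(\frac{\log^{J}(mN+\alpha)}{mN+\alpha}\right)
\]
as $m\to\infty$. By Theorem \ref{thm:sym_MZV}, the polynomial $L^{\mathcal{S}}_\alpha\!\left(\tfrac{\veta}{\kk};T\right)\in\C[T]$ is actually constant in $T$ and equals $L^{\mathcal{S}}_\alpha\binom{\veta}{\kk}\in\C$. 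Hence the main term is independent of $m$, the error tends to $0$, and the limit exists and equals $L^{\mathcal{S}}_\alpha\binom{\veta}{\kk}$.

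There is no genuine obstacle here; the only points that require a little care are the periodicity observation $\eta^{mN+\alpha}=\eta^\alpha$ (which makes the $\alpha$-dependence in the definition match the asymptotic formula along the subsequence $m\mapsto mN+\alpha$) and the fact that without Theorem \ref{thm:sym_MZV} the main term $L^{\mathcal{S}}_\alpha\!\left(\tfrac{\veta}{\kk};T_m\right)$ would vary with $m$ through $T_m\sim\log m$ and no limit would exist. Thus the $T$-independence theorem is precisely what converts the asymptotic formula into the convergence statement.
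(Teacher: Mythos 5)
Your proof is correct and follows exactly the same route as the paper, which simply states that the result is immediate from Theorems \ref{thm:asym_zm} and \ref{thm:sym_MZV}; you have usefully spelled out the two implicit steps (the periodicity $\eta^{mN+\alpha}=\eta^{\alpha}$ matching the $\alpha$-dependence in Definition \ref{def:SCMZV}, and the role of $T$-independence in turning the asymptotic into a convergent limit).
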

\begin{proof}
This is immediate from Theorems \ref{thm:asym_zm} and \ref{thm:sym_MZV}.
\end{proof}

We give a few remarks on our symmetric colored multiple zeta values. 

\begin{remark}\label{rem:connection_other_models}
\begin{enumerate}
\item Let $\mathcal{Z}$ denote the $\Q(\zeta_N)$-vector space spanned by all colored multiple zeta values of level $N$. 
From Theorem \ref{thm:sym_MZV} and Definition \ref{def:SCMZV} together with the harmonic product formula, we see that our symmetric colored multiple zeta values of level $N$ lie in the space $\mathcal{Z}+ \pi i \mathcal{Z}$.
\item For all $k_1,\ldots,k_r\in \Z_{>0}$ and $\eta_1,\ldots,\eta_r\in \mu_N^r$, our $L^{\mathcal{S}}_1 \binom{\eta_1,\ldots,\eta_r}{k_1,\ldots,k_r}$ coincides with $\zeta^{\rm exp,Ad}(k_r,\ldots,k_1;\eta_r,\ldots,\eta_1,1;0)$ the exponential adjoint cyclotomic multiple zeta value introduced by Jarossay \cite[Eq.~(A.1.3)]{J18}.
\item For any index $\binom{\veta}{\kk}\in \mu_N^r\times \Z_{>0}^r$, it can be shown that
\begin{equation*}\label{eq:SZmodel}
L^{\mathcal{S}}_{-1}\binom{\veta}{\kk}\equiv \zeta_\ast^{\mathcal{S}}\binom{\kk}{\veta} \equiv \zeta_\shuffle^{\mathcal{S}}\binom{\kk}{\veta} \pmod{\pi i \mathcal{Z} + \pi^2 \mathcal{Z}} ,
\end{equation*}
where $\zeta_\bullet^{\mathcal{S}}\binom{\kk}{\veta} \ (\bullet\in\{\ast,\shuffle\})$ are symmetric colored multiple zeta values introduced by Singer and Zhao \cite[Eq.~(3),(4)]{SZ15}.
\end{enumerate}
\end{remark}

Our symmetric colored multiple zeta values originate from the limiting values of multiple harmonic $q$-sums at primitive roots of unity.
This is a completely different perspective from other works on symmetric colored multiple zeta values.

\section{Finite colored multiple zeta values}

\subsection{Definition}

We define the finite colored multiple zeta values as a counterpart of our symmetric colored multiple zeta values for each class $\alpha \in \mathbb{Z}/N\mathbb{Z}$.

Let $\mathcal{P}(N;\alpha)$ be the set of primes congruent to $\alpha$ modulo $N$.
The Chebotarev density theorem shows that the cardinality of the set $\mathcal{P}(N;\alpha)$ is infinite with density $1/\varphi(N)$, where $\varphi$ is Euler's totient function.

If $p$ is prime, since the elements $1-\zeta_p^{j}  \ (1\le j\le p-1)$ are cyclotomic units in $\Z[\zeta_p]$ (see \cite[Proposition 2.8]{W97}), for any index $\binom{\veta}{\kk}\in \mu_N^r\times \Z_{>0}^r$ we have
\begin{equation*}\label{eq:integrality}
 \setlength\arraycolsep{1pt}z_p\left(\begin{array}{c}\veta\\\kk\end{array};\zeta_p\right) \in \Z[\zeta_{pN}].
 \end{equation*}
Let $\mathfrak{P}$ denote a prime ideal in $\mathbb{Z}[\zeta_{pN}]$ above the prime ideal $(1-\zeta_p)$ of $\mathbb{Z}[\zeta_{p}]$ generated by $1-\zeta_p$.
We note that $\zeta_p \equiv 1 \mod \mathfrak{P}$.
For convenience, we think of the residue field $\mathbb{Z}[\zeta_{pN}]/\mathfrak{P}$, which is a finite extension of $\mathbb{F}_p$, as a subfield of the algebraic closure $\overline{\mathbb{F}}_p$.
Under this identification, we easily see that for a prime $p$, $\alpha \in (\mathbb{Z}/N\mathbb{Z})^\times$, $k_1,\ldots,k_r\in \Z_{>0}$ and $\eta_1,\ldots,\eta_r\in \mu_N$ we have
\begin{equation}\label{eq:cong_mod_pe} \setlength\arraycolsep{1pt}z_p \left(\begin{array}{c}\eta_1,\ldots,\eta_r\\ k_1,\ldots,k_r\end{array};\zeta_p \right) \equiv \sum_{p>m_1>\cdots>m_r>0}\frac{\eta_1^{m_1}\cdots \eta_r^{m_r}}{m_1^{k_1}\cdots m_r^{k_r}} \mod \mathfrak{P}. 
\end{equation}

For each $\alpha\in (\Z/N\Z)^\times$, define the ring $\mathcal{A}(\alpha)$ by
\[ \mathcal{A}(\alpha) = \mathcal{A}(N;\alpha):=\prod_{p\in P(N;\alpha)} \overline{\mathbb{F}}_p \big/ \bigoplus_{p\in P(N;\alpha)} \overline{\mathbb{F}}_p\]
Its elements are of the form $(a_p)_p$, where $p$ runs over all primes in $\mathcal{P}(N;\alpha)$ and $a_p\in \overline{\mathbb{F}}_p$. 
Two elements $(a_p)_p$ and $(b_p)_p$ are identified if and only if $a_p=b_p$ for all but finitely many primes $p\in \mathcal{P}(N;\alpha)$.
The rational field $\Q$ can be embedded into $\mathcal{A}(\alpha)$ as follows.
For $a\in \Q$, set $a_p=0$ if $p$ divides the denominator of $a$ and $a_p=a\in \mathbb{F}_p$ otherwise.
Then $(a_p)_p \in \mathcal{A}(\alpha)$ for any $\alpha\in (\Z/N\Z)^\times$.
In this way, we can also embed $\Q(\zeta_N)$ into $\mathcal{A}(\alpha)$.
With this, $\mathcal{A}(\alpha)$ forms a commutative algebra over $\Q(\zeta_N)$.

\

We now define our finite colored multiple zeta values as elements of $\mathcal{A}(\alpha)$.

\begin{definition}\label{def:FCMZV}
Let $\alpha\in (\Z/N\Z)^\times$.
For each index $\binom{\veta}{\kk}=\binom{\eta_1,\ldots,\eta_r}{k_1,\ldots,k_r}\in\mu_N^r\times \Z_{>0}^r$, we define the \emph{finite colored multiple zeta value of level $N$} $L^{\mathcal{A}}_\alpha \binom{\veta}{\kk} $ by
\[ L^{\mathcal{A}}_\alpha \binom{\veta}{\kk} = \left( \sum_{p>m_1>\cdots>m_r>0} \frac{\eta_1^{m_1}\cdots \eta_r^{m_r}}{m_1^{k_1}\cdots m_r^{k_r}} \quad \mod \mathfrak{P}  \right)_{p\in \mathcal{P}(N;\alpha)} \in  \mathcal{A}(\alpha).\]
\end{definition}

Remark that the cases $N=2,3,4,6$ with $\alpha=-1$ were studied by Singer and Zhao \cite[\S3]{SZ15}.
In his study of the Akagi-Hirose-Yasuda type connection with $p$-adic cyclotomic (we call it colored) multiple zeta value, Jarossay \cite[Definition 5.2.2]{J18} introduced another model of finite cyclotomic multiple zeta values as elements of $\prod_p\overline{\mathbb{F}}_p/\bigoplus_p \overline{\mathbb{F}}_p$, where $p$ runs over \emph{all} primes, which will be a different object from ours (see Remark \ref{rem:pp}).

\subsection{Connection with multiple harmonic $q$-sums at roots of unity}

Taking modulo $\mathfrak{P}$, we get $\zeta_p \equiv 1 \mod \mathfrak{P}$. 
This will be an `algebraic' limit $q\rightarrow 1$ mentioned in the introduction.
Collecting multiple harmonic $q$-sums at primitive $p$-th roots of unity modulo $\mathfrak{P}$ for all $p\in \mathcal{P}(N;\alpha)$, we obtain our finite colored multiple zeta values.

\begin{theorem}\label{thm:connection_fmzv}
For $\alpha\in (\Z/N\Z)^\times$ and $\binom{\veta}{\kk}\in\mu_N^r\times \Z_{>0}^r$, we have
\[L^{\mathcal{A}}_\alpha \binom{\veta}{\kk}= \left(\setlength\arraycolsep{1pt}z_p\left(\begin{array}{c}\veta\\\kk\end{array};\zeta_p\right)   \mod \mathfrak{P} \right)_{p\in \mathcal{P}(N;\alpha)}.\]
\end{theorem}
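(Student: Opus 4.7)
The plan is to observe that Theorem \ref{thm:connection_fmzv} is essentially a repackaging of the componentwise congruence already recorded as \eqref{eq:cong_mod_pe}, so the argument is very short once that congruence is in hand.

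First, I would verify \eqref{eq:cong_mod_pe} (even though it is stated in the text, the proof should cite it explicitly). The key local fact is: for a prime $p$ and any integer $m$ with $0<m<p$, one has $\zeta_p\equiv 1 \pmod{\mathfrak{P}}$, and therefore
\[ [m]_{\zeta_p} = 1+\zeta_p+\cdots+\zeta_p^{m-1} \equiv m \pmod{\mathfrak{P}}. \]
Since $p\nmid m$, the residue $m$ is a unit in the residue field $\mathbb{Z}[\zeta_{pN}]/\mathfrak{P}\subseteq \overline{\mathbb{F}}_p$, hence so is $[m]_{\zeta_p}$, and consequently $[m]_{\zeta_p}^{-k_j}\equiv m^{-k_j}\pmod{\mathfrak{P}}$. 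The factors $\eta_j^{m_j}$ lie in $\mathbb{Z}[\zeta_N]\subseteq\mathbb{Z}[\zeta_{pN}]$ and so reduce sensibly to elements of $\overline{\mathbb{F}}_p$. Reducing each summand of $z_p\!\bigl(\begin{smallmatrix}\veta\\\kk\end{smallmatrix};\zeta_p\bigr)$ modulo $\mathfrak{P}$ and summing gives the congruence \eqref{eq:cong_mod_pe}.

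Second, I would just match definitions. By Definition \ref{def:FCMZV}, the element $L^{\mathcal{A}}_\alpha\!\binom{\veta}{\kk}\in\mathcal{A}(\alpha)$ has $p$-th component (for $p\in \mathcal{P}(N;\alpha)$) equal to the truncated colored harmonic sum modulo $\mathfrak{P}$. By the previous step this coincides componentwise with $z_p\!\bigl(\begin{smallmatrix}\veta\\\kk\end{smallmatrix};\zeta_p\bigr)\bmod \mathfrak{P}$ for every $p\in\mathcal{P}(N;\alpha)$, in particular for all but finitely many of them, which is the required equality in $\mathcal{A}(\alpha)=\prod_{p\in \mathcal{P}(N;\alpha)}\overline{\mathbb{F}}_p\big/\bigoplus_{p\in \mathcal{P}(N;\alpha)}\overline{\mathbb{F}}_p$.

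There is no genuine obstacle here: the analytic content of the theorem is entirely contained in the elementary cyclotomic-units computation of Step 1, and Step 2 is a definition chase. The only point requiring a brief sanity check is that the $N$-th root of unity $\eta_j$ and the $p$-th root of unity $\zeta_p$ live in compatible places, which is handled uniformly by working inside $\mathbb{Z}[\zeta_{pN}]$ and reducing modulo $\mathfrak{P}$.
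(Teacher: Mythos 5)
Your proposal is correct and follows essentially the same route as the paper: the theorem is proved by citing \eqref{eq:cong_mod_pe} and unwinding Definition \ref{def:FCMZV}, with your Step~1 simply spelling out the elementary cyclotomic-units argument that the paper states as ``we easily see''.
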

\begin{proof}
This is immediate from \eqref{eq:cong_mod_pe}.
\end{proof}

\section{Fundamental relations}

\subsection{Relations for finite and symmetric colored multiple zeta values}
We will prove the reversal relations and the harmonic relations for finite and symmetric colored multiple zeta values, using those for multiple harmonic $q$-sums at roots of unity.

\begin{proposition} \label{prop:reversal}
 For $\alpha\in \Z/N\Z$, $\bullet \in \{\mathcal{A},\mathcal{S}\}$, positive integers $k_1,\ldots,k_r\ge1$ and $\eta_1,\ldots,\eta_r\in\mu_N$ we have
 \[\overline{L^{\mathcal{\bullet}}_\alpha  \binom{\eta_1,\ldots,\eta_r}{ k_1,\ldots,k_r} } = (-1)^{k_1+\cdots+k_r} \left(\eta_1\cdots \eta_r\right)^{-\alpha} L^{\mathcal{\bullet}}_\alpha \binom{\eta_r,\ldots,\eta_1}{k_r,\ldots,k_1} .\]
\end{proposition}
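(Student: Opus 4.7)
The plan is to treat the two cases $\bullet=\mathcal{A}$ and $\bullet=\mathcal{S}$ in parallel; both rest on essentially the same substitution, either $m_j\mapsto p-n_{r+1-j}$ on the summation indices (finite case) or its re-indexing analogue $i=r-j$ on the sum in Definition \ref{def:SCMZV} (symmetric case).

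For $\bullet=\mathcal{A}$, I would start from the series defining $L^{\mathcal{A}}_\alpha\binom{\veta}{\kk}$ in Definition \ref{def:FCMZV}. Applying the bijection $m_j=p-n_{r+1-j}$ to the index set $\{p>m_1>\cdots>m_r>0\}$ (which gives back an index set of the same form in the $n_j$'s), one uses
\[ \eta_j^{p-n_{r+1-j}}=\eta_j^{p}\,\eta_j^{-n_{r+1-j}}=\eta_j^{\alpha}\,\overline{\eta_j}^{n_{r+1-j}} \quad(p\equiv\alpha\!\!\pmod{N}) \]
together with $(p-n)^k\equiv(-1)^k n^k\pmod{p}$ inside $\mathbb{Z}[\zeta_{pN}]/\mathfrak{P}$. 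Reordering the product via $j'=r+1-j$ and factoring out the global sign and the $\eta_j^{\alpha}$ factors converts the original sum, modulo $\mathfrak{P}$, into $(-1)^{k_1+\cdots+k_r}(\eta_1\cdots\eta_r)^\alpha\,L^{\mathcal{A}}_\alpha\binom{\overline{\eta_r},\ldots,\overline{\eta_1}}{k_r,\ldots,k_1}$. Replacing each $\eta_j$ by $\overline{\eta_j}$ (which on the finite level swaps the sums in the natural way and matches the paper's convention $\overline{\eta}=\eta^{-1}$) yields the claim.

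For $\bullet=\mathcal{S}$, Theorem \ref{thm:sym_MZV} lets us view both sides as complex numbers, which I would verify by unfolding Definition \ref{def:SCMZV} at a real $T$. Complex conjugation acts on the shuffle-regularized coefficients through
\[ \overline{L_\ast\binom{\veta}{\kk}(T+c)}=L_\ast\binom{\overline{\veta}}{\kk}(T+\overline{c}), \]
which is visible from the harmonic-regularized series. Conjugating the defining formula for $L^{\mathcal{S}}_\alpha\binom{\veta}{\kk}(T)$ then gives an expansion over $j=0,\ldots,r$ with prefactor $(-1)^{k_1+\cdots+k_j}(\eta_1\cdots\eta_j)^{-\alpha}$ and with factors $L_\ast\binom{\eta_j,\ldots,\eta_1}{k_j,\ldots,k_1}(T-\tfrac{\pi i}{2})$ and $L_\ast\binom{\overline{\eta}_{j+1},\ldots,\overline{\eta}_r}{k_{j+1},\ldots,k_r}(T+\tfrac{\pi i}{2})$. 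On the other hand, plugging the reversed index into Definition \ref{def:SCMZV} and substituting $i=r-j$ produces the same expansion after pulling out $(-1)^{k_1+\cdots+k_r}(\eta_1\cdots\eta_r)^{-\alpha}$ and using $(\eta_1\cdots\eta_r)^{-\alpha}(\eta_{i+1}\cdots\eta_r)^{\alpha}=(\eta_1\cdots\eta_i)^{-\alpha}$. Term-by-term comparison closes the proof.

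The hard part is nothing deeper than bookkeeping: keeping the signs $(-1)^{k_1+\cdots+k_j}$, the exponents $(\eta_1\cdots)^{\pm\alpha}$, and the branches $T\pm\tfrac{\pi i}{2}$ aligned after re-indexing is the only real work. Once the bijections $m_j=p-n_{r+1-j}$ and $i=r-j$ are in place, the two computations proceed by symbolic matching, and no new analytic input beyond Theorem \ref{thm:sym_MZV} is needed.
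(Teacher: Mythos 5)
Your proof is correct, but it takes a somewhat different route from the paper. The paper proves a single identity at the level of the multiple harmonic $q$-sums at primitive roots of unity, namely
\[
\overline{z_m\!\left(\begin{smallmatrix}\eta_1,\ldots,\eta_r\\ k_1,\ldots,k_r\end{smallmatrix};\zeta_m\right)}
=(-\zeta_m)^{-(k_1+\cdots+k_r)}\,(\eta_1\cdots\eta_r)^{-m}\,
z_m\!\left(\begin{smallmatrix}\eta_r,\ldots,\eta_1\\ k_r,\ldots,k_1\end{smallmatrix};\zeta_m\right),
\]
obtained from $(1-\overline{\zeta}_m)/(1-\zeta_m)=-\overline{\zeta}_m$ and the substitution $m_j\mapsto m-m_{r+1-j}$, and then deduces both cases at once from the two connection theorems (the analytic limit for $\mathcal{S}$ and the reduction modulo $\mathfrak{P}$ for $\mathcal{A}$). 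You instead prove the two cases separately: for $\mathcal{A}$ you re-index the defining truncated sum modulo $\mathfrak{P}$ directly, and for $\mathcal{S}$ you conjugate the generating-series expression in Definition~\ref{def:SCMZV} termwise, using $\overline{L_\ast(\cdot;T+c)}=L_\ast(\overline{\cdot};T+\overline{c})$ for real $T$ together with the $T$-independence of Theorem~\ref{thm:sym_MZV}. The bookkeeping you outline (the re-indexings $m_j\mapsto p-n_{r+1-j}$ and $i=r-j$, the sign and $\alpha$-power matching, and the interpretation of $\overline{L^{\mathcal{A}}_\alpha}$ as $\eta_j\mapsto\overline{\eta}_j$) all goes through. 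What the paper's approach buys is a single uniform computation that only needs to be done once, consistent with its philosophy that all relations should descend from the $q$-sum level; what yours buys is a proof of the $\mathcal{S}$ case that stays entirely inside Definition~\ref{def:SCMZV} and does not invoke the asymptotic connection theorem at all, only the $T$-independence.
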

\begin{proof}
Using the identity $(1-\overline{\zeta}_m)/(1-\zeta_m)=-\overline{\zeta}_m$ and replacing $m_j$ with $m-m_{r+1-j}$, one gets
\begin{align*}
 \overline{\setlength\arraycolsep{1pt}z_m  \left(\begin{array}{c}\eta_1,\ldots,\eta_r \\ k_1,\ldots,k_r\end{array};\zeta_m \right) } &= \sum_{m> m_1>\cdots>m_r>0} \prod_{j=1}^r \overline{\eta}_j^{m_j} \left( \frac{1-\overline{\zeta}_m}{1-\overline{\zeta}_m^{m_j} }\right)^{k_j}  \\
 &= \left( \frac{1-\overline{\zeta}_m}{1-\zeta_m} \right)^{k_1+\cdots+k_r} \sum_{m> m_1>\cdots>m_r>0} \prod_{j=1}^r \eta_j^{-m_j} \left( \frac{1-\zeta _m}{1-\overline{\zeta}_m^{m-m_j} }\right)^{k_j} \\
 &= (-\zeta_m)^{-k_1-\cdots-k_r}\left(\eta_1\cdots \eta_r\right)^{-m}  \sum_{m> m_1>\cdots>m_r>0} \prod_{j=1}^r \eta_j^{m_j} \left( \frac{1-\zeta _m}{1-\zeta_m^{m_j} }\right)^{k_{r+1-j}}\\
 &=\left(-\zeta_m\right)^{-k_1-\cdots-k_r} \left(\eta_1\cdots \eta_r\right)^{-m} \setlength\arraycolsep{1pt}z_m  \left(\begin{array}{c}\eta_r,\ldots,\eta_1 \\ k_r,\ldots,k_1\end{array};\zeta_m \right) .
\end{align*}
With this, the results follow from Theorems \ref{thm:connection_smzv} and \ref{thm:connection_fmzv}.
\end{proof}

We note that $\overline{L^{\mathcal{A}}_\alpha  \binom{\eta_1,\ldots,\eta_r}{ k_1,\ldots,k_r} }$ means  $L^{\mathcal{A}}_\alpha  \binom{\overline{\eta}_1,\ldots,\overline{\eta}_r}{ k_1,\ldots,k_r}$ and that $L^{\mathcal{A}}_\alpha =0$ whenever $\alpha \not\in (\Z/N\Z)^\times$.

\begin{proposition} \label{prop:harmonic}
For $\alpha\in \Z/N\Z$, the $\Q$-linear maps $\mathcal{L}^{\mathcal{S}}_\alpha  : \mathfrak{A}^1_\ast \rightarrow \C$ and $\mathcal{L}^{\mathcal{A}}_\alpha  : \mathfrak{A}^1_\ast \rightarrow \mathcal{A}(\alpha)$ defined by 
\[  \mathcal{L}^{\bullet}_\alpha (e_{k_1,\eta_1}\cdots e_{k_r,\eta_r})= L^{\bullet}_{\alpha} \binom{\eta_1,\ldots,\eta_r}{k_1,\ldots,k_r} \quad \mbox{and}\quad \mathcal{L}^{\bullet}_\alpha (1)=1 \quad (\bullet \in \{\mathcal{S},\mathcal{A}\})\] 
are algebra homomorphisms.
Namely, for any words $w,w'\in \mathfrak{A}^1_\ast$ we have $\mathcal{L}^{\mathcal{\bullet}}_\alpha (w)\mathcal{L}^{\mathcal{\bullet}}_\alpha (w')=\mathcal{L}^{\mathcal{\bullet}}_\alpha (w\ast w')$.
\end{proposition}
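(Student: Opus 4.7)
The plan is to deduce both statements directly from the harmonic relation for multiple harmonic $q$-sums at primitive roots of unity (Proposition \ref{prop:stuffle_zm}), by pushing that identity through the two different limit procedures that produce $\mathcal{L}^{\mathcal{A}}_\alpha$ and $\mathcal{L}^{\mathcal{S}}_\alpha$ (Theorems \ref{thm:connection_fmzv} and \ref{thm:connection_smzv}). In both cases the harmonic product is preserved, because reduction modulo $\mathfrak{P}$ is a ring homomorphism and the ordinary limit in $\mathbb{C}$ is compatible with multiplication.

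First I would dispose of the finite case. Fix $w,w'\in \mathfrak{A}^1$. For every prime $p\in \mathcal{P}(N;\alpha)$, Proposition \ref{prop:stuffle_zm} gives the identity
\[ \mathfrak{z}_p(w\ast w') = \mathfrak{z}_p(w)\,\mathfrak{z}_p(w') \quad \text{in } \mathbb{Z}[\zeta_{pN}].\]
Reducing this identity modulo the prime ideal $\mathfrak{P}$ and collecting over $p\in \mathcal{P}(N;\alpha)$, the right-hand side becomes the component-wise product in $\prod_{p\in \mathcal{P}(N;\alpha)} \overline{\mathbb{F}}_p$, which descends to the product in $\mathcal{A}(\alpha)$. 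By Theorem \ref{thm:connection_fmzv}, the two sides of the reduced identity are exactly $\mathcal{L}^{\mathcal{A}}_\alpha(w\ast w')$ and $\mathcal{L}^{\mathcal{A}}_\alpha(w)\,\mathcal{L}^{\mathcal{A}}_\alpha(w')$, so the claim for $\bullet=\mathcal{A}$ follows.

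For the symmetric case, the same identity of Proposition \ref{prop:stuffle_zm} applied at $q=\zeta_{mN+\alpha}=e^{2\pi i/(mN+\alpha)}$ yields
\[ \mathfrak{z}_{mN+\alpha}(w\ast w') = \mathfrak{z}_{mN+\alpha}(w)\,\mathfrak{z}_{mN+\alpha}(w') \quad \text{in } \mathbb{C}\]
for every $m\ge 1$. Each of the three factors converges individually as $m\to\infty$ by Theorem \ref{thm:connection_smzv}, with limits $L^{\mathcal{S}}_\alpha(w\ast w')$, $L^{\mathcal{S}}_\alpha(w)$, and $L^{\mathcal{S}}_\alpha(w')$ respectively; since multiplication in $\mathbb{C}$ is continuous, the identity passes to the limit and gives the desired algebra homomorphism property.

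There is no real obstacle here: the only point that deserves a moment of care is the observation that on indices $\binom{\veta}{\kk}$ which are \emph{not} admissible the value $L^{\mathcal{S}}_\alpha\binom{\veta}{\kk}$ is defined via the regularized polynomials $L_\ast(\cdot;T)$, so one should emphasise that the \emph{limit} statement in Theorem \ref{thm:connection_smzv} holds for \emph{every} index in $\mu_N^r\times \Z_{>0}^r$. Once that is recorded, the entire argument is the two-line passage described above, and the harmonic relation is transported from multiple harmonic $q$-sums to our finite and symmetric colored multiple zeta values without further work.
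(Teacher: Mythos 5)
Your proposal is correct and matches the paper's proof, which deduces the claim in one line from Proposition \ref{prop:stuffle_zm} together with Theorems \ref{thm:connection_smzv} and \ref{thm:connection_fmzv}. You have simply spelled out the two limit passages (reduction mod $\mathfrak{P}$ for $\mathcal{A}$, analytic limit as $m\to\infty$ for $\mathcal{S}$), and your remark that Theorem \ref{thm:connection_smzv} applies to all indices, not only admissible ones, is the right detail to record.
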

\begin{proof}
The result is a consequence of Proposition \ref{prop:stuffle_zm} and Theorems \ref{thm:connection_smzv} and \ref{thm:connection_fmzv}.
\end{proof}

\subsection{Linear shuffle relation}
We show the linear shuffle relation for finite colored multiple zeta values.
Unfortunately, the result is not a consequence of the relation for multiple harmonic $q$-sums at roots of unity and Theorems \ref{thm:connection_smzv} and \ref{thm:connection_fmzv}.

\begin{proposition}\label{prop:shuffle_general}
Let $\alpha\in (\Z/N\Z)^\times$. 
For any $u \in \mathfrak{A}^1$ and $v\in \mathfrak{A}$, we have
\[  \mathcal{L}^{\mathcal{A}}_\alpha  ( {\bf q}(u\shuffle ve_1)) =   (-1)^{|v|+1}\mathcal{L}^{\mathcal{A}}_\alpha  ( {\bf q}(\overset{\leftarrow}{v}e_1 u) ) ,\]
where $|v|$ and $\overset{\leftarrow}{v}$ are respectively the weight and the reversal word.
\end{proposition}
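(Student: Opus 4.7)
The plan is to verify the identity component-by-component in $\mathcal{A}(\alpha)$, i.e.\ modulo $\mathfrak{P}$ for each prime $p \in \mathcal{P}(N;\alpha)$, by performing a single carefully chosen change of variables in the truncated finite sums representing $\mathcal{L}^{\mathcal{A}}_\alpha\circ {\bf q}$. The two key tools will be the modular identities $\eta^p \equiv \eta^\alpha \pmod{\mathfrak{P}}$ for $\eta\in\mu_N$ (valid because $p\equiv\alpha\pmod N$ and $\alpha\in(\Z/N\Z)^\times$) and $1/(p-m)\equiv-1/m\pmod p$. First I would reduce, by $\Q(\zeta_N)$-linearity, to the case where $u$ and $v$ are monomials, and then expand the shuffle using the standard recursion that isolates the trailing $e_1$ of the second factor:
\[
u \shuffle ve_1 \;=\; \sum_{u=u_1u_2}\,(u_1\shuffle v)\,e_1\,u_2,
\]
the sum running over all factorizations $u=u_1 u_2$ of the word $u$.

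The next step is to apply $\mathcal{L}^{\mathcal{A}}_\alpha\circ{\bf q}$ to each summand. The resulting truncated sum mod $p$ has a distinguished summation variable attached to the pivot letter $e_1$, separating a ``left block'' of larger variables (coming from $u_1\shuffle v$) from a ``right block'' of smaller variables (coming from $u_2$). I would then perform the substitution $m\mapsto p-m$ on every variable in the left block, i.e.\ on the variables attached to the letters of $(u_1\shuffle v)e_1$. Under this substitution the ordering of those variables is reversed, each factor $1/m^k$ picks up $(-1)^k$, and each factor $\eta^m$ coming from an $e_\eta$ picks up a factor $\eta^\alpha$.

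After rewriting in the new variables, the inner shuffle $u_1\shuffle v$---which ran over all interleavings of the $u_1$-variables with the $v$-variables---combines with the reversal of the $v$-variables to collapse, by an explicit bijection of summation indices, into the single concatenation $\overset{\leftarrow}{v}\,u_1$. Together with the pivot and with $u_2$ (untouched by the substitution), the left block then reassembles, upon summing over all decompositions $u=u_1 u_2$, into the single word $\overset{\leftarrow}{v}\,e_1\,u_1 u_2 = \overset{\leftarrow}{v}\,e_1\,u$. The sign and $\eta^\alpha$ contributions coming from the $|u_1|$ variables of $u_1$ cancel against those arising when the $u_1$-block is transported back to its position inside $u$; the $\eta^\alpha$-factors from the $v$-letters are absorbed, via the definition of ${\bf q}$, into the cyclotomic labels of $\overset{\leftarrow}{v}$; and what survives is exactly the factor $(-1)^{|v|+1}$ from the $|v|$ variables of $v$ together with the pivot $e_1$.

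The hard part will be the combinatorial bijection just described: showing that the sum over shuffles $u_1\shuffle v$ (taken over all decompositions $u=u_1 u_2$), after the substitution $m\mapsto p-m$ on the variables of $(u_1\shuffle v)e_1$, collapses to the single concatenated expression $\overset{\leftarrow}{v}\,e_1\,u$, and verifying that the reassignment of roots of unity performed by ${\bf q}$ across the $e_1$-insertion point is compatible with this collapse. Once this bookkeeping is carried out, the sign and cyclotomic counting follow routinely from the two modular identities stated at the outset.
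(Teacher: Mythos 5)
Your high-level strategy — verify componentwise modulo $\mathfrak{P}$ for $p\in\mathcal{P}(N;\alpha)$, use $\eta^p\equiv\eta^\alpha$ and $1/(p-m)\equiv-1/m$, and make a single change of variables $m\mapsto p-m$ — matches the paper's. But the way you set up that change of variables is genuinely different and runs into a gap at exactly the point you flag as the hard part.

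The paper does not use the recursion $u\shuffle ve_1 = \sum_{u_1u_2=u}(u_1\shuffle v)e_1 u_2$ at all. Instead it exploits the identity $\mathcal{L}\circ{\bf q}=\mathcal{I}$ of (3.2) in truncated form: with $c_m(w'):=[z^m]I_{[0,z]}(w')$ one has $L_p({\bf q}(w'))=\sum_{0<m<p}c_m(w')$, and since $I_{[0,z]}$ is a shuffle homomorphism (whereas ${\bf q}$ and $L_p\circ{\bf q}$ are not), $L_p({\bf q}(u\shuffle w))=\sum_{0<i,j<p,\ i+j<p}c_i(u)c_j(w)$. This turns $L_p\circ{\bf q}$ of the shuffle into a convolution of two \emph{decoupled} descending chains, one for $u$ and one for $w=ve_1$. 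The substitution $n\mapsto p-n$ is then applied to all of the $w$-chain and to none of $u$; it reverses the $w$-chain, and the convolution bound $i+j<p$, now $i<p-j$, fuses the $u$-chain onto the bottom of the reversed $w$-chain to give a single descending chain of length $r+s$, which is $L_p({\bf q}(\overset{\leftarrow}{v}e_1u))$ up to the expected sign and $\eta^\alpha$ factor. The convolution step is what makes the single substitution both natural and sufficient.

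Your route faces two concrete obstructions. First, if you substitute $m\mapsto p-m$ on the left-block variables of $(u_1\shuffle v)e_1 u_2$, the link between the substituted pivot variable and the untouched top variable of $u_2$ becomes $m'_{\mathrm{pivot}}+m_{\mathrm{top\ of\ }u_2}<p$, an additive constraint rather than a chain inequality; the result is not $L_p\circ{\bf q}$ of any word, and it is not clear how summing over all factorizations $u=u_1u_2$ would reassemble one. Second, and more fundamentally, ${\bf q}$ is not a shuffle homomorphism — it relabels each $e_\xi$ by the preceding $e_{\xi'}$ — so the cyclotomic labels of ${\bf q}(\sigma e_1u_2)$ genuinely depend on which interleaving $\sigma\in u_1\shuffle v$ you take; there is no "explicit bijection of summation indices" collapsing the shuffle to the single concatenation $\overset{\leftarrow}{v}u_1$. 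Already for $u=e_\eta$, $v=e_\nu$ the mechanism is different from what you describe: the two interleavings coming from $u_1=e_\eta$ cancel each other mod $\mathfrak{P}$, while the $u_1=\emptyset$ term equals the right-hand side with no substitution at all. The paper's convolution formula is precisely the device that sidesteps having to push ${\bf q}$ through the shuffle termwise.
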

\begin{proof}
We use the same technique as in the proof of Theorem 8.1 in \cite{Kaneko} (see also \cite[Proposition 2.3.3]{J18}).
By abuse of notation, we may view the truncated colored multiple zeta value $L_p$ for a prime $p$ and the iterated integral $I_{[0,z]}$ ($z\in \C$ with $|z|\le 1$) as $\Q$-linear maps $L_p: \mathfrak{A}^1\rightarrow \C$ and $I_{[0,z]}: \mathfrak{A}^1\rightarrow \C$ given by 
\[L_p(e_{k_1,\eta_1}\cdots e_{k_r,\eta_r})=\sum_{p>m_1>\cdots >m_r>0} \frac{\eta_1^{m_1}\cdots \eta_r^{m_r}}{m_1^{k_1}\cdots m_r^{k_r}}\]
and 
\[I_{[0,z]}(e_{k_1,\eta_1}\cdots e_{k_r,\eta_r})= I_{[0,z]}(0^{k_1-1},\eta_1,\ldots,0^{k_r-1},\eta_r).\]
By Proposition \ref{prop:iterated_integral} we have
\begin{align*}
L_p({\bf q}(e_{k_1,\eta_1}\cdots e_{k_r,\eta_r}))= \sum_{0<m<p}\big(\mbox{coefficient of $z^m$ in $I_{[0,z]}(e_{k_1,\eta_1}\cdots e_{k_r,\eta_r})$}\big).
\end{align*}
Using this, for $u=e_{k_1,\eta_1}\cdots e_{k_r,\eta_r}\in \mathfrak{A}^1$ and $w=e_{l_1,\nu_1}\cdots e_{l_s,\nu_s}\in \mathfrak{A}^1$ we compute
\begin{align*}
&L_p({\bf q}(u\shuffle w) )\\
&=\sum_{0<m<p}\big(\mbox{coefficient of $z^m$ in $I_{[0,z]} (u\shuffle w)$}\big)\\
&=\sum_{\substack{0<i,j<p\\i+j<p}} \big(\mbox{coefficient of $z^i$ in $I_{[0,z]} (u)$}\big) \big(\mbox{coefficient of $z^j$ in $I_{[0,z]} ( w)$}\big)\\
&=\sum_{\substack{0<i,j<p\\i+j<p}} \left( \sum_{i>m_2>\cdots>m_r>0} \frac{\eta_1^{i} (\overline{\eta}_1\eta_2)^{m_2} \cdots (\overline{\eta}_{r-1}\eta_r)^{m_r} }{i^{k_1} m_2^{k_2}\cdots m_r^{k_r}}\right)\\
&\times  \left( \sum_{j>n_2>\cdots>n_s>0} \frac{\nu_1^{j} (\overline{\nu}_1\nu_2)^{n_2} \cdots (\overline{\nu}_{s-1}\nu_s)^{n_s} }{j^{l_1} n_2^{l_2}\cdots n_s^{l_s}}\right).
\end{align*}
Since it holds that
\[\sum_{b>n>a}\frac{\eta^n}{n^k} \equiv  \sum_{b>n>a} \frac{\eta^n}{(n-p)^k}\equiv  (-1)^k \sum_{p-a>p-n>p-b} \frac{\eta^n}{(p-n)^k}\mod p  \]
for $0<a,b<p$, the above last term modulo $\mathfrak{P} $ can be reduced to
\begin{align*}
&\equiv \sum_{\substack{0<i,j<p\\i<p-j}} \left( \sum_{i>m_2>\cdots>m_r>0} \frac{\eta_1^{i} (\overline{\eta}_1\eta_2)^{m_2} \cdots (\overline{\eta}_{r-1}\eta_r)^{m_r} }{i^{k_1} m_2^{k_2}\cdots m_r^{k_r}}\right)\\
&\times  (-1)^{l_1+\cdots+l_s} \left( \sum_{p>p-n_s>\cdots>p-n_2>p-j} \frac{\nu_1^{j} (\overline{\nu}_1\nu_2)^{n_2} \cdots (\overline{\nu}_{s-1}\nu_s)^{n_s} }{(p-j)^{l_1} (p-n_2)^{l_2}\cdots (p-n_s)^{l_s}}\right) \\
&\equiv(-1)^{l_1+\cdots+l_s} \nu_s^p \\
&\times\sum_{p>h_s>\cdots>h_2>h>i>m_2>\cdots>m_r>0}\ \frac{(\overline{\nu}_{s-1}\nu_s)^{-h_s}\cdots  (\overline{\nu}_1\nu_2)^{-h_2} \nu_1^{-h}  \eta_1^{i} (\overline{\eta}_1\eta_2)^{m_2} \cdots (\overline{\eta}_{r-1}\eta_r)^{m_r}}{h_s^{l_s}\cdots h_2^{l_2}g^{l_1} i^{k_1}m_2^{k_2}\cdots m_r^{k_r}}\\
&\equiv(-1)^{l_1+\cdots+l_s} \nu_s^p \\
&\times\sum_{p>m_1>\cdots>m_{r+s}>0}\ \frac{(\nu_{s-1}\overline{\nu}_{s})^{m_1}\cdots  (\nu_1\overline{\nu}_2)^{m_{s-1}} \overline{\nu}_1^{m_s}  \eta_1^{m_{s+1}} (\overline{\eta}_1\eta_2)^{m_{s+2}} \cdots (\overline{\eta}_{r-1}\eta_r)^{m_{s+r}}}{m_1^{l_s}\cdots m_{s}^{l_1} m_{s+1}^{k_1}\cdots m_{s+r}^{k_r}}\\
&\equiv(-1)^{l_1+\cdots+l_s} \nu_s^p L_p\binom{\frac{\nu_{s-1}}{\nu_{s}},\ldots,\frac{\nu_1}{\nu_{2}},\frac{1}{\nu_1},\frac{\eta_1}{1},\frac{\eta_2}{\eta_1},\ldots, \frac{\eta_{r}}{\eta_{r-1}}}{l_s,\ldots,l_2,l_1,k_1,k_2,\ldots,k_r} \mod \mathfrak{P} .
\end{align*}
Taking $w=ve_1$ with $v=e_0^{l_1-1}e_{\nu_1}\cdots e_0^{l_{s-1}-1}e_{\nu_{s-1}} e_0^{l_s-1}$, we have
\[ L_p({\bf q}(u\shuffle ve_1) )\equiv (-1)^{l_1+\cdots +l_s} L_p  ( {\bf q}(\overset{\leftarrow}{v}e_1 u) )\mod \mathfrak{P},\]
from which the statement follows.
\end{proof}

\begin{proposition}\label{prop:shuffle_general2}
Let $\alpha\in \Z/N\Z$. 
For any $u \in \mathfrak{A}^1$ and $v\in \mathfrak{A}$, we have
\[  \mathcal{L}^{\mathcal{S}}_\alpha  ( {\bf q}(u\shuffle ve_1)) \equiv  (-1)^{|v|+1}\mathcal{L}^{\mathcal{S}}_\alpha  ( {\bf q}(\overset{\leftarrow}{v}e_1 u) ) \pmod{\pi i\mathcal{Z}},\]
where $\mathcal{Z}$ denotes the $\Q(\zeta_N)$-vector space spanned by all colored multiple zeta value of level $N$.
\end{proposition}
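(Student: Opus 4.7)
The plan is to transport the proof of Proposition~\ref{prop:shuffle_general} to the symmetric setting, with the prime $p$ and the congruence modulo $\mathfrak{P}$ replaced respectively by the monodromy factor $2\pi i$ and the congruence modulo $\pi i\mathcal{Z}$. As a first step, I would derive the compact formula
\[
 \mathcal{L}^{\mathcal{S}}_\alpha\bigl({\bf q}(w)\bigr) \;=\; \frac{1}{2\pi i}\sum_{\eta\in\mu_N}\overline{\eta}^{\,\alpha}\bigl\langle\Phi_{exp}^\eta(2\pi i),\,e_1 w\bigr\rangle
\]
valid for any nonempty $w\in\mathfrak{A}^1$, by combining Lemma~\ref{lem:xi_alpha_generating_series} with the identity ${\bf p}(e_1{\bf q}(w))=e_1 w$ (which holds because ${\bf q}$ commutes with prepending $e_1$, and ${\bf p}\circ{\bf q}=\mathrm{id}$) and with the expression~\eqref{eq:xi_int} obtained in the proof of Theorem~\ref{thm:sym_MZV}; the constant term $-1$ in~\eqref{eq:xi_int} drops out as $e_1 w$ is nonempty.

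Next I would expand $\Phi_{exp}(T)=\sigma(\Phi_\shuffle(0))\,e^{T{\sf x}_1}\,\Phi_\shuffle(0)$ as a concatenation product and reduce modulo $\pi i\mathcal{Z}$. Since $\Phi_\shuffle(0)$ is group-like with respect to $\Delta_\shuffle$, its coefficients compute shuffle-regularized iterated integrals $\mathcal{I}\circ{\rm reg}_0$, and the antipode of the shuffle Hopf algebra $(\mathfrak{A},\shuffle,\Delta)$ sends a word of length $\ell$ to $(-1)^\ell$ times its reverse. Expanding $e^{2\pi i{\sf x}_\eta}=\sum_n(2\pi i)^n{\sf x}_\eta^n/n!$, the coefficient at $e_1 w$ becomes a sum over splittings $e_1 w=w_1 e_\eta^n w_2$ weighted by $(2\pi i)^n/n!$ and $(-1)^{|w_1|}$; the $n=0$ contribution vanishes because $\sigma(\Phi_\shuffle(0))\Phi_\shuffle(0)=1$. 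After dividing by $2\pi i$, the scalar $(2\pi i)^{n-1}/n!$ lies in $\mathcal{Z}$ for odd $n$ (using $\pi^{2k}\in \mathcal{Z}$) and in $\pi i\mathcal{Z}$ for even $n$; equivalently, $(e^{2\pi i{\sf x}_\eta}-1)/(2\pi i)\equiv \sin(2\pi{\sf x}_\eta)/(2\pi)$ modulo $\pi i\mathcal{Z}$, so only the odd-$n$ splittings survive and $\mathcal{L}^{\mathcal{S}}_\alpha({\bf q}(w))$ reduces to a finite $\mathcal{Z}$-linear combination of products of shuffle-regularized iterated integrals.

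Finally, I would specialize to $w=u\shuffle ve_1$ and to $w=\overset{\leftarrow}{v}e_1 u$, and compare the two sums term by term, using the shuffle identity $\mathcal{I}({\rm reg}_0(A\shuffle B))=\mathcal{I}({\rm reg}_0 A)\,\mathcal{I}({\rm reg}_0 B)$ (again a consequence of the group-likeness of $\Phi_\shuffle(0)$) together with the reversal formula $\mathcal{I}({\rm reg}_0\overset{\leftarrow}{W})=(-1)^{|W|}\mathcal{I}({\rm reg}_0 W)$ coming from the antipode. The overall sign $(-1)^{|v|+1}$ arises from reversing $ve_1$ combined with the antipode sign. This combinatorial matching is the principal obstacle: it parallels the ``$(n-p)^k\equiv n^k$'' flip in the proof of Proposition~\ref{prop:shuffle_general}, with the integer $n$ counting the extracted copies of $e_\eta$ playing the role of the prime $p$, but one must carefully track the $\eta$-summation weighted by $\overline{\eta}^{\,\alpha}$, the possible positions of the block $e_\eta^n$ inside the shuffle $u\shuffle ve_1$, and the antipode-induced signs. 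The argument is structurally analogous to Hirose's treatment~\cite{H18} of the classical ($N=1$) symmetric shuffle relations and to Jarossay's \cite[Proposition 2.3.3]{J18}, both of which serve as useful templates.
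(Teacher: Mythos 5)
Your opening step is correct: using Lemma~\ref{lem:xi_alpha_generating_series}, the identity ${\bf q}(e_1 w)=e_1{\bf q}(w)$ (so ${\bf p}(e_1{\bf q}(w))=e_1w$), and \eqref{eq:xi_int}, one does get $\mathcal{L}^{\mathcal{S}}_\alpha({\bf q}(w))=\langle \Xi_\alpha(T+\tfrac{\pi i}{2},T-\tfrac{\pi i}{2}),e_1w\rangle$ for nonempty $w\in\mathfrak{A}^1$, which is where the paper's proof also starts. From there, however, the proposal diverges and leaves the real work undone.

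The paper's argument has two structural ingredients that you do not supply. First, after reducing $\Xi_\alpha(T+\tfrac{\pi i}{2},T-\tfrac{\pi i}{2})\equiv \sum_\eta\overline\eta^\alpha E^\eta$ with $E^\eta=\sigma(\Phi_\shuffle^\eta(0))\,\x_\eta\,\Phi_\shuffle^\eta(0)$ modulo $\pi i\mathcal{Z}\langle\langle\ldots\rangle\rangle$, the crucial observation is that $E^\eta$ is \emph{primitive} for $\Delta_\shuffle$: $\Delta_\shuffle(E^\eta)=E^\eta\otimes1+1\otimes E^\eta$, since $\Phi_\shuffle^\eta(0)$ is group-like and $\x_\eta$ is primitive. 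This gives the clean vanishing $\langle E^\eta,w\shuffle w'\rangle=0$ for \emph{all} nonempty $w,w'$, with no combinatorial bookkeeping. Second, the paper invokes an algebraic identity in $\mathfrak{A}_\shuffle$ (from \cite[Eq.~(2.3.3)]{J18}, \cite[Lemma 19]{H18}) expressing $e_1(u\shuffle w)-(-1)^{|w|}\overset{\leftarrow}{w}e_1u$ as a sum of shuffle products of two nonempty words; applying the previous vanishing to each summand finishes the proof. You cite \cite{H18} and \cite{J18} as ``templates'' but do not reproduce either of these steps, and you explicitly flag the ``combinatorial matching'' as the principal obstacle — i.e.\ the proof is not completed.

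There are two further problems with the route you sketch. (i) Your reduction keeps $\sin(2\pi\x_\eta)/(2\pi)$ rather than just $\x_\eta$. This is actually the more careful statement for $N=1,2$, where $\pi i\notin\mathcal{Z}$; but precisely because the higher odd powers $\x_\eta^{2k+1}$ ($k\ge1$) are \emph{not} primitive, the element $\sigma(\Phi_\shuffle^\eta(0))\tfrac{\sin(2\pi\x_\eta)}{2\pi}\Phi_\shuffle^\eta(0)$ is not primitive, and the vanishing-on-shuffles argument that makes the proof work does not apply to it. Retaining the full $\sin$ thus blocks the clean path rather than helping it, and your plan gives no alternative mechanism for those extra terms. (ii) The asserted reversal formula $\mathcal{I}({\rm reg}_0\overset{\leftarrow}{W})=(-1)^{|W|}\mathcal{I}({\rm reg}_0 W)$ is false (take $W=e_\eta$ with $\eta\ne1$: it would force $-\log(1-\eta)=0$). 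What is true is the antipode relation $\langle\sigma(\Phi_\shuffle(0)),w\rangle=(-1)^{|w|}\langle\Phi_\shuffle(0),\overset{\leftarrow}{w}\rangle$, which relates $\sigma(\Phi_\shuffle(0))$ to $\Phi_\shuffle(0)$, not $\Phi_\shuffle(0)$ to itself. In short: look for the primitivity of $E^\eta$ and the shuffle identity for $e_1(u\shuffle w)-(-1)^{|w|}\overset{\leftarrow}{w}e_1u$; those two facts replace the term-by-term matching you were attempting.
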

\begin{proof}
It follows from \eqref{eq:xi_int} that
\begin{align*}
\Xi_\alpha \left(T+\frac{\pi i}{2},T-\frac{\pi i}{2}\right) &= \sum_{\eta\in \mu_N} \overline{\eta}^\alpha \left( \sigma(\Phi_\shuffle(0))^\eta \frac{e^{2\pi i \x_\eta}}{2\pi i} \Phi_\shuffle^\eta(0) -\frac{1}{2\pi i}\right)\\
&\equiv  \sum_{\eta\in \mu_N} \overline{\eta}^\alpha\sigma(\Phi_\shuffle^\eta(0)) \x_\eta \Phi_\shuffle^\eta(0) \pmod{\pi i \mathcal{Z}\langle\langle \x_0,\x_\eta \mid\eta\in \mu_N\rangle\rangle}.
\end{align*}
Since $\Phi_\shuffle^\eta(0)$ is group-like, letting $E^\eta=\sigma(\Phi_\shuffle^\eta(0)) \x_\eta \Phi_\shuffle^\eta(0)$, we have
\[ \Delta_\shuffle (E^\eta) =\left( \sigma(\Phi_\shuffle^\eta(0))\otimes \sigma(\Phi_\shuffle^\eta(0)) \right)(\x_\eta\otimes 1+1\otimes \x_\eta) \left( \Phi_\shuffle^\eta(0)\otimes \Phi_\shuffle^\eta(0) \right)= E^\eta \otimes 1+1\otimes E^\eta.\]
For words $w,w'\in \mathfrak{A}$ not being the empty word, by \eqref{eq:delta_shuffle} this shows $\langle E^\eta ,w\shuffle w'\rangle= 0$, and hence, 
\begin{align*}
\left\langle \Xi_\alpha \left(T+\frac{\pi i}{2},T-\frac{\pi i}{2}\right) ,w\shuffle w'\right\rangle&\equiv \left\langle \sum_{\eta\in \mu_N} \overline{\eta}^\alpha E^\eta ,w\shuffle w'\right\rangle\\
&\equiv 0 \pmod{\pi i\mathcal{Z}}.
\end{align*}
For $u,w\in \mathfrak{A}$ with $w=e_{a_1}\cdots e_{a_n}$ it can be shown (see \cite[Eq.~(2.3.3)]{J18} and \cite[Lemma 19]{H18}) that
\[ e_1(u\shuffle w) -(-1)^{|w|}\overset{\leftarrow}{w}e_1 u = \sum_{i=1}^n (-1)^{i+1} e_1 \big(u\shuffle e_{a_1}\cdots e_{a_i}\big) \shuffle e_{a_n}\cdots e_{a_{i+1}}.\] 
Thus, for $u\in \mathfrak{A}^1$ and $w=ve_1$ with $v\in \mathfrak{A}$, using Lemma \ref{lem:xi_alpha_generating_series}, one can compute
\begin{align*}
\mathcal{L}_\alpha^{\mathcal{S}}({\bf q}( u\shuffle ve_1) ) &= \left\langle \Xi_\alpha \left(T+\frac{\pi i}{2},T-\frac{\pi i}{2}\right) ,e_1(u\shuffle ve_1)\right\rangle \\
&\equiv   \left\langle \Xi_\alpha \left(T+\frac{\pi i}{2},T-\frac{\pi i}{2}\right) ,(-1)^{|v|+1}e_1\overset{\leftarrow}{v}e_1 u \right\rangle \\
&\equiv(-1)^{|v|+1} \mathcal{L}_\alpha^{\mathcal{S}}({\bf q}( \overset{\leftarrow}{v}e_1 u ))  \pmod{\pi i\mathcal{Z}}.
\end{align*}
We are done.
\end{proof}

As pointed out by Jarossay, the above proofs for Propositions \ref{prop:shuffle_general} and \ref{prop:shuffle_general2} are the same as the proofs for Lemma 2.3.6 and Proposition 5.2.3 in \cite{J18}.
We call Propositions \ref{prop:shuffle_general} and \ref{prop:shuffle_general2} the linear shuffle relation. 
Singer and Zhao obtains the linear shuffle relation for both finite and symmetric colored multiple zeta values at $\alpha=-1$ (see \cite[Theorems 3.3 and 4.11]{SZ15}), which is a special case of Propositions \ref{prop:shuffle_general} and \ref{prop:shuffle_general2}.

\section{A generalization of the Kaneko-Zagier conjecture}

\subsection{Setup}
We provide some data on finite and symmetric colored multiple zeta values, in order to discuss a generalization of the Kaneko-Zagier conjecture (see \cite[Conjecture 9.5]{Kaneko} for the original statement). 
Hereafter, we denote by $\mathcal{Z}^{\mathcal{A}(N;\alpha)}_k$ (resp. $\mathcal{Z}^{\mathcal{S}(N;\alpha)}_k$) the $\Q(\zeta_N)$-vector space spanned by all finite (resp. symmetric) colored multiple zeta values of weight $k$ and level $N$ with a class $\alpha\in \Z/N\Z$, and set
\[ \mathcal{Z}^{\mathcal{A}(N;\alpha)} = \sum_{k\ge0} \mathcal{Z}^{\mathcal{A}(N;\alpha)}_k,\quad\mathcal{Z}^{\mathcal{S}(N;\alpha)} = \sum_{k\ge0} \mathcal{Z}^{\mathcal{S}(N;\alpha)}_k.\]
By Proposition \ref{prop:harmonic}, these are commutative algebras over the cyclotomic field $\Q(\zeta_N)$.
It is worth mentioning that we do not define $\mathcal{Z}^{\mathcal{A}(N;\alpha)}$ and $\mathcal{Z}^{\mathcal{S}(N;\alpha)}$ as $\Q$-vector spaces, because the reversal relation (Proposition \ref{prop:reversal}) is already a $\Q(\zeta_N)$-linear relation.
Note that since $L_\alpha^{\mathcal{S}}\tbinom{1}{1} = -\pi i$, we always have $2\pi i \in \mathcal{Z}^{\mathcal{S}(N;\alpha)}$.

As we have seen in the previous section, finite and symmetric colored multiple zeta values of level $N$ with a class $\alpha \in \Z/N\Z$ satisfy the same relations (modulo $\pi i$ for symmetric ones), which supports the following conjecture.

\begin{conjecture}\label{conj:gKZ}
For each $\alpha\in (\Z/N\Z)^\times$, the $\Q(\zeta_N)$-linear map
\begin{align*}
\mathcal{Z}^{\mathcal{S}(N;\alpha)}  &\longrightarrow \mathcal{Z}^{\mathcal{A}(N;\alpha)}\\
L^{\mathcal{S}}_\alpha \binom{\veta}{\kk}  &\longmapsto L^{\mathcal{A}}_{\alpha}\binom{\veta}{\kk} 
\end{align*}
is a well-defined algebra homomorphism whose kernel is generated by $2\pi i$.
\end{conjecture}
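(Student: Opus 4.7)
The plan is to attack the conjecture by splitting it into three sub-claims: (i) well-definedness of the assignment $L^{\mathcal{S}}_\alpha\tbinom{\veta}{\kk}\mapsto L^{\mathcal{A}}_\alpha\tbinom{\veta}{\kk}$ modulo the ideal $(2\pi i)$; (ii) the resulting map is multiplicative; (iii) its kernel is exactly $(2\pi i)$. The guiding principle is that both sides arise as limits of the same multiple harmonic $q$-sum of level $N$, analytically via Theorem~\ref{thm:connection_smzv} and algebraically via Theorem~\ref{thm:connection_fmzv}, so every identity that can be lifted to a $q$-identity automatically produces matching identities on both limits. The overall strategy is therefore to exhibit a supply of common ``universal'' relations, accumulate enough of them to span all $\Q(\zeta_N)$-linear relations on the symmetric side, and then transport them to the finite side.

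Two of the three assertions are effectively already in hand. For (ii), Proposition~\ref{prop:harmonic} asserts that $\mathcal{L}^{\mathcal{S}}_\alpha$ and $\mathcal{L}^{\mathcal{A}}_\alpha$ are both harmonic algebra homomorphisms out of $\mathfrak{A}^1_\ast$, so once (i) is established multiplicativity is automatic from the fact that the proposed map sends generators to generators. For the inclusion $(2\pi i)\subset\ker$, the identity $L^{\mathcal{S}}_\alpha\tbinom{1}{1}=-\pi i$ together with the elementary congruence $\sum_{0<m<p}m^{-1}\equiv 0\pmod{\mathfrak{P}}$ (Fermat's little theorem suffices; Wolstenholme gives the stronger mod-$p^2$ statement) yields $L^{\mathcal{A}}_\alpha\tbinom{1}{1}=0$, so $2\pi i$ lies in the kernel. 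Since $\tfrac{1}{2}\in\Q(\zeta_N)\subset \mathcal{Z}^{\mathcal{S}(N;\alpha)}$, the ideal $(2\pi i)$ actually contains $\pi i\cdot\mathcal{Z}^{\mathcal{S}(N;\alpha)}$, which absorbs the ``modulo $\pi i\mathcal{Z}$'' appearing in Proposition~\ref{prop:shuffle_general2}.

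The substantive content is part (i) together with the reverse inclusion $\ker\subset(2\pi i)$. My approach is to assemble the structural relations common to both families---the reversal relation of Proposition~\ref{prop:reversal}, the harmonic relation, the linear shuffle relations of Propositions~\ref{prop:shuffle_general} and~\ref{prop:shuffle_general2}, and the level-$N$ regularized double shuffle relations extractable from the identity $\Psi_\ast(T)=\Lambda(\x_{1,1})\tilde{\bf q}(\pi_1(\Phi_\shuffle(T)))$ of Lemma~\ref{lem:reg_theorem}---and attempt to prove that, specialised to $L^{\mathcal{S}}_\alpha$ modulo $\pi i\mathcal{Z}$, they exhaust all $\Q(\zeta_N)$-linear relations. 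Each such relation was verified in Section~5 to hold in identical shape on the finite side, so this would simultaneously yield well-definedness and, via a matching dimension count, injectivity modulo $(2\pi i)$. The main obstacle is that this programme is strictly harder than the $N=1$ Kaneko-Zagier conjecture, which remains open: no technique currently known rules out additional $\Q(\zeta_N)$-linear relations on the finite side beyond those coming from the regularized double shuffle framework, and the two limits $q\to\zeta_p$ and $q\to e^{2\pi i/m}$ are not obviously compatible except by factoring through a shared $q$-identity. Short of a new conceptual input, the realistic outcome is a piecemeal confirmation---numerical verification in low weight and depth, and matching further families of identities one at a time---along the lines already pursued in Section~5.
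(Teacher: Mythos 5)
The statement you are asked about is a \emph{conjecture}, and the paper does not prove it; Section~6 only assembles partial evidence (the matching relations of Section~5, the numerical dimension tables, and the comparison with Deligne--Goncharov). Your write-up is, despite its label, not a proof either, and to your credit you say so explicitly in the final paragraph. What you have produced is an accurate decomposition of the conjecture into its three constituents---well-definedness, multiplicativity, and the identification of the kernel with $(2\pi i)$---together with a correct accounting of which pieces are cheap and which are genuinely hard. Your observations that multiplicativity follows from Proposition~\ref{prop:harmonic} once well-definedness holds, that $L^{\mathcal{A}}_\alpha\binom{1}{1}=0$ while $L^{\mathcal{S}}_\alpha\binom{1}{1}=-\pi i$ gives the easy inclusion $(2\pi i)\subset\ker$, and that $(2\pi i)$ absorbs the ``$\bmod\ \pi i\mathcal{Z}$'' ambiguity in Proposition~\ref{prop:shuffle_general2} are all correct and correspond to what the paper implicitly uses to justify the conjecture's plausibility.

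The substantive gap you identify---namely, that nothing currently proves the common stock of relations (reversal, harmonic, linear shuffle, extended double shuffle) is complete on either side, so neither well-definedness nor the reverse kernel inclusion can be deduced---is exactly the gap that leaves this a conjecture in the paper and makes it (as you note) strictly harder than the still-open $N=1$ Kaneko--Zagier conjecture. One small point worth sharpening: your claim ``$(2\pi i)\subset\ker$'' presupposes well-definedness of the map, so strictly speaking it is a statement about the stated assignment rather than about a map already known to exist; the logical order is (i) then (iii), as you in fact acknowledge. In summary, your proposal matches the paper's evidential strategy and is honest about its incompleteness; it does not, and cannot with present technology, constitute a proof.
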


Conjecture \ref{conj:gKZ} can be viewed as a level $N$ analogue of the Kaneko-Zagier conjecture, which in the case $\alpha=-1$ is proposed in \cite[Conjecture 1.2]{SZ15} for $N=3,4$ and in \cite{Z16} for $N=2$.
There might be a close connection to the $p$-adic variant of the Kaneko-Zagier conjecture for higher levels, proposed by Jarossay \cite[Conjecture 5.3.2]{J18}.
In the following subsections, we give numerical support on Conjecture \ref{conj:gKZ}.

\subsection{Symmetric v.s. classical colored multiple zeta values}

We denote by 
\[ \mathcal{Z}^{(N)} = \sum_{k\ge0} \mathcal{Z}^{(N)}_k\]
the vector space over $\Q$ spanned by all colored multiple zeta value of level $N$.
This is not defined over the cyclotomic field $\Q(\zeta_N)$ because of the result of Deligne-Goncharov below.
The space $\mathcal{Z}^{(N)}$ forms a $\Q$-algebra.
Note that $2\pi i$ lies in $\mathcal{Z}^{(N)}$ of weight 1 if $N\ge3$.
By Definition \ref{def:SCMZV}, for each $\alpha\in \Z/N\Z$ we have
\[ \mathcal{Z}^{\mathcal{S}(N;\alpha)} \subset \mathcal{Z}^{(N)}\otimes_\Q \Q(\zeta_N) \quad (N\ge3),\]
and
\[\mathcal{Z}^{\mathcal{S}(N;\alpha)} \subset \mathcal{Z}^{(N)} +2\pi i \mathcal{Z}^{(N)} \quad (N=1,2).\]
Therefore we have
\[ \dim_{\Q(\zeta_N)} \mathcal{Z}^{\mathcal{S}(N;\alpha)}_k \le \dim_\Q \mathcal{Z}^{(N)}_k \quad (N\ge3).\]

We remark that using Yasuda's result \cite{Y16}, Hirose \cite{H18} proved the equality 
\[\mathcal{Z}^{\mathcal{S}(1;1)}= \mathcal{Z}^{(1)}+2\pi i \mathcal{Z}^{(1)}.\]
The equalities 
\begin{equation}\label{eq:equality_conj} 
\mathcal{Z}^{\mathcal{S}(N;\alpha)} \stackrel{?}{=} \mathcal{Z}^{(N)}\otimes_\Q \Q(\zeta_N) \quad (N\ge3)
\end{equation}
and
\[\mathcal{Z}^{\mathcal{S}(2;\alpha)} \stackrel{?}{=} \mathcal{Z}^{(2)} +2\pi i \mathcal{Z}^{(2)} \]
are open.

\subsection{A work of Deligne-Goncharov}

For comparison, we recall the result of Deligne-Goncharov \cite[\S5]{DG05}.

Let $\overline{\mathcal{Z}}^{(N)} = \mathcal{Z}^{(N)}/2\pi i \mathcal{Z}^{(N)}$ for $N\ge3$ and $\overline{\mathcal{Z}}^{(N)} = \mathcal{Z}^{(N)}/(2\pi i)^2 \mathcal{Z}^{(N)}$ for $N=1,2$.
By constructing motivic fundamental groupoids of $\mathbb{P}^1\backslash\{0,\mu_N,\infty\}$ as an object of the Tannakian category $\mathcal{MT}_N$ of mixed Tate motives over the ring $\mathbb{Z}[\mu_N,\frac{1}{N}]$, Deligne-Goncharov proved that the colored multiple zeta value of level $N$ is a period of $\mathcal{MT}_N$.
As a consequence, we obtain $\dim_\Q \overline{\mathcal{Z}}^{(N)}_k \le \dim_\Q \mathcal{A}^{\mathcal{MT}_N}_k$, where $\mathcal{A}^{\mathcal{MT}_N}=\bigoplus_{k\ge0} \mathcal{A}^{\mathcal{MT}_N}_k$ is the graded Hopf algebra of the pro-unipotent affine group scheme $\mathcal{U}^{\mathcal{MT}_N}$ of the motivic Galois group of $\mathcal{MT}_N$.
It follows from \cite[Theorem 5.24]{DG05} that 
\[ \sum_{k\ge0} \dim_\Q \mathcal{A}^{\mathcal{MT}_N}_k t^k = \begin{cases} \frac{1-t^2}{1-t^2-t^3} & N=1 \\ \frac{1-t^2}{1-t-t^2} & N=2 \\ \frac{1-t}{1-\big(\frac{\varphi(N)}{2}+\nu_N\big) t+ (\nu_N -1) t^2 } & N\ge3 \end{cases},\]
where $\nu_N$ is the number of distinct prime factors of $N$.
Here is a table of $\dim_\Q \mathcal{A}^{\mathcal{MT}_N}_k$:
\begin{center}
\begin{tabular}{c|ccccccccccccccccccccccccc}
$ k$&1&2&3&4&5&6&7&8 \\ \hline
$N=1$ & 0 & 0 & 1 & 0 & 1 & 1 & 1 & 2 \\ \hline
$N=2$ &  1 & 1 & 2 & 3 & 5 & 8 & 13 & 21 \\ \hline
$N=3$ & 1 & 2 & 4 & 8 & 16 & 32 \\ \hline 
$N=4$ & 1 & 2 & 4 & 8 & 16 & 32 \\ \hline
$N=5$ & 2 & 6 & 18 & 54 & 162\\ \hline
$N=6$ & 2 & 5 & 13 & 34 & 89\\ \hline
$N=7$ & 3 & 12 & 48 & 192 & 768 \\ \hline
$N=8$ & 2 & 6 & 18 & 54 & 162\\ \hline
$N=9$ & 3 & 12 & 48 & 192 & 768\\ \hline
$N=10$ & 3 & 11 & 41 & 153 & 571 \\ \hline
\end{tabular}
\end{center}

As further progress on this work, it is proved by Brown \cite{B1} for the case $N=1$ and by Deligne \cite{D10} for the cases $N=2,3,4,8$ that the inequality $\dim_\Q \overline{\mathcal{Z}}^{(N)}_k \le \dim_\Q \mathcal{A}^{\mathcal{MT}_N}_k$ is sharp (see also \cite{Glanois}).
More precisely, in these cases, all periods of $\mathcal{MT}_N$ can be written in terms of colored multiple zeta values of level $N$ (and $2\pi i$ if $N=1,2$).
Unlike these cases, Zhao \cite{Z10} pointed out that there will be periods of $\mathcal{MT}_N$ which can not be written in terms of colored multiple zeta values of level $N$, if $N$ is a prime power with the prime being greater than or equal to 5.

\subsection{Dimension on finite colored multiple zeta values}

We give a table of the conjectural dimension of $\mathcal{Z}^{\mathcal{A}(N;\alpha)}_k$ obtained by a computer and compare it with the result of Deligne-Goncharov \cite{DG05} in the previous subsection. 

Zagier invented an approach to numerically compute the dimension of the $\Q$-vector space spanned by finite multiple zeta value of level 1.
His approach can be applied for the congruence model of finite multiple zeta values of level $N$ with a class $\alpha\in \Z/N\Z$. 
It is defined for positive integers $k_1,\ldots,k_r$ and $f_1,\ldots,f_r\in \Z/N\Z$ by
\[ \zeta^{\mathcal{A}}_\alpha \binom{f_1,\ldots,f_r}{k_1,\ldots,k_r} = \left( \sum_{\substack{p>m_1>\cdots>m_r>0\\m_a\equiv f_a \pmod{N}\ \forall a}} \frac{1}{m_1^{k_1}\cdots m_r^{k_r}} \quad \mod \mathfrak{P}  \right)_{p\in \mathcal{P}(N;\alpha)} \in  \mathcal{A}(\alpha).\]
This model can be written in terms of our finite colored multiple zeta values of level $N$ with a class $\alpha$; 
\begin{equation*}
 \zeta^{\mathcal{A}}_\alpha \binom{f_1,\ldots,f_r}{k_1,\ldots,k_r}  = \frac{1}{N^r} \sum_{\eta_1,\ldots,\eta_r\in \mu_N} \overline{\eta}^{f_1}_1 \cdots \overline{\eta}^{f_r}_r L_\alpha^{\mathcal{A}} \binom{\eta_1,\ldots,\eta_r}{k_1,\ldots,k_r},
\end{equation*}
which is a direct consequence of the well-known identity 
\begin{equation*}\label{eq:ztoZ} 
\frac{1}{N} \sum_{\eta\in \mu_N} \eta^{m} = \begin{cases} 1 & N|m \\ 0 & \mbox{otherwise} \end{cases} \quad (m\in\Z).
\end{equation*}
Moreover, we can prove that the space $\mathcal{Z}^{\mathcal{A}(N;\alpha)}_k$ is generated by all the congruence model of finite multiple zeta values of weight $k$ and level $N$ with a class $\alpha\in \Z/N\Z$ (see \cite{Tasaka?}).

With PARI-GP \cite{PARI}, we numerically counted the number of linearly independent relations over $\Q$ among the congruence model of finite multiple zeta values of weight $k$ and level $N$ with a class $\alpha\in \Z/N\Z$, which may give an upper bound of $\dim_{\Q(\mu_N)} \mathcal{Z}^{\mathcal{A}(N;\alpha)}_k$.
The result tells us that the number of linearly independent relations is seemingly independent from the choices of $\alpha\in (\Z/N\Z)^\times$, namely, we have 
\[\dim_{\Q(\zeta_N)} \mathcal{Z}^{\mathcal{A}(N;\alpha)}_k \stackrel{?}{=} \dim_{\Q(\zeta_N)} \mathcal{Z}^{\mathcal{A}(N;\beta)}_k\]
 for $\alpha,\beta\in (\Z/N\Z)^\times$ with $\alpha\neq \beta$.
Because of this situation, we only display the dimension for the case $\alpha=1$ as follows.

\begin{center}
\begin{tabular}{c|ccccccccccccccccccccccccc}
$ k$&1&2&3&4&5&6&7&8 \\ \hline
$\dim \mathcal{Z}_{k}^{\mathcal{A}(1;1)}$ & 0 & 0 & 1 & 0 & 1 & 1 & 1 & 2 \\ \hline
$\dim \mathcal{Z}_{k}^{\mathcal{A}(2;1)}$ &  1 & 1 & 2 & 3 & 5 & 8 & 13 & 21 \\ \hline
$\dim \mathcal{Z}_{k}^{\mathcal{A}(3;1)}$ & 1 & 2 & 4 & 8 & 16 & 32 \\ \hline 
$\dim \mathcal{Z}_{k}^{\mathcal{A}(4;1)} $ & 1 & 2 & 4 & 8 & 16 & 32 \\ \hline
$\dim \mathcal{Z}_{k}^{\mathcal{A}(5;1)}$ & 2 & 5 & 14 & 39 \\ \hline
$\dim \mathcal{Z}_{k}^{\mathcal{A}(6;1)}$ & 2 & 5 & 13 & 34 \\ \hline
$\dim \mathcal{Z}_{k}^{\mathcal{A}(7;1)}$ & 3 & 10 & 35 \\ \hline
$\dim \mathcal{Z}_{k}^{\mathcal{A}(8;1)}$ & 2 & 6 & 18 & 54\\ \hline
$\dim \mathcal{Z}_{k}^{\mathcal{A}(9;1)}$ & 3 & 12 &48 \\ \hline
$\dim \mathcal{Z}_{k}^{\mathcal{A}(10;1)}$ & 3 &11 & 41  \\ \hline
\end{tabular}
\end{center}

\

This table should be compared with the dimension table of $ \mathcal{A}^{\mathcal{MT}_N}_k$ in the previous subsection.
As a result, we may conjecture the equality
\begin{equation*}\label{eq:conj_finite} 
\dim_\Q \mathcal{A}^{\mathcal{MT}_N} \stackrel{?}{=} \dim_{\Q(\zeta_N)} \mathcal{Z}^{\mathcal{A}(N;\alpha)} \quad (\mbox{for $N=1,2,3,4,6,8,9,10$}),
\end{equation*}
although the above data may not be sufficient.
Assuming Conjectures \ref{conj:gKZ} and \eqref{eq:equality_conj}, the above equality in a certain sense will be true for $N=1,2,3,4,8$ (Brown's and Deligne's cases).
Another perspective is that we may further conjecture that for the cases $N=6,9,10$, all periods of $\mathcal{MT}_N$ may be written in terms of colored multiple zeta values of level $N$.

\begin{remark}\label{rem:pp}
The careful reader will notice that finite colored multiple zeta value may not need to be separated into a class $\alpha \in (\Z/N\Z)^\times$. 
Of course, one can define a variant of the finite colored multiple zeta value as 
\[ \left( \sum_{p>m_1>\cdots>m_r>0} \frac{\eta_1^{m_1}\cdots \eta_r^{m_r}}{m_1^{k_1}\cdots m_r^{k_r}} \quad \mod \mathfrak{P}  \right)_{p}\in \prod_p\overline{\mathbb{F}}_p/\bigoplus_p \overline{\mathbb{F}}_p,\]
where $p$ runs over \emph{all} primes (which is the one introduced by Jarossay \cite[Definition 5.2.2]{J18}).
For this, denote by $\mathcal{Z}^{\mathcal{A}(N)}_k$ the $\Q(\zeta_N)$-vector space spanned by all the above finite multiple zeta values of weight $k$ and level $N$.
We then observed the equality $\dim_{\Q(\zeta_N)} \mathcal{Z}^{\mathcal{A}(N;1)}_k\stackrel{?}{=} \dim_{\Q(\zeta_N)} \mathcal{Z}^{\mathcal{A}(N)}_k$ for $N=3,4$, and the inequality $\dim_{\Q(\zeta_N)} \mathcal{Z}^{\mathcal{A}(N;1)}_k<\dim_{\Q(\zeta_N)} \mathcal{Z}^{\mathcal{A}(N)}_k$ for $N\ge5$.
This observation suggests that the separation with respect to a class will play an important role in the study on finite colored multiple zeta values.
\end{remark}


\end{document}